\documentclass{amsart}
\usepackage{amsmath,amssymb}

\usepackage{mathtools}
\usepackage{booktabs} 
\usepackage{adjustbox} 
\usepackage{color,soul}
\usepackage{bbm}
\usepackage{hyperref}
\hypersetup{colorlinks=true, urlcolor=blue, citecolor=blue, linkcolor=blue}
\usepackage{bm}
\usepackage{amsopn,amsthm}
\usepackage{doi}

\newtheorem{theorem}{Theorem}[section]
\newtheorem{lemma}[theorem]{Lemma}
\newtheorem{corollary}[theorem]{Corollary}

\theoremstyle{definition}

\theoremstyle{remark}
\newtheorem{remark}[theorem]{Remark}
\newcommand{\dif}{\mathop{}\!\mathrm{d}}  
\numberwithin{equation}{section}
\def\p{\partial}
\def\N{\mathbb N}
\def\R{\mathbb R}
\DeclareMathOperator{\supp}{supp}
\DeclareMathOperator*{\argmin}{arg\,min}

\newcommand{\dx}{\, \dif x}
\newcommand{\ds}{\, \dif s}
\newcommand{\dt}{\, \dif t}
\newcommand{\dtau}{\, \dif \tau}

\makeatletter
\@namedef{subjclassname@2020}{\textup{2020} Mathematics Subject Classification}
\makeatother

\begin{document}

\title[{Friction Threshold Recovery in a Tresca Model}]{Convergence Analysis for the Recovery of the Friction Threshold in a Scalar Tresca Model}
\author{Erik Burman}
\address{Department of Mathematics, University College London, 807b Gower Street, London, WC1E
6BT, United Kingdom}
\curraddr{}
\email{e.burman@ucl.ac.uk}
\thanks{E.B. was supported by the EPSRC grants EP/T033126/1 and
  EP/V050400/1. For the purpose of open access, the author has applied a Creative Commons Attribution (CC BY) licence to any Author Accepted Manuscript version arising.}

\author{Marvin Kn\"oller}
\address{Department of Mathematics and Statistics, University of Helsinki, P.O 68, 00014, Helsinki,
Finland}
\curraddr{}
\email{marvin.knoller@helsinki.fi}
\thanks{M.K. was supported by the Research Council of Finland (Flagship of Advanced
Mathematics for Sensing, Imaging and Modelling grant 359182).}

\author{Lauri Oksanen}
\address{Department of Mathematics and Statistics, University of Helsinki, P.O 68, 00014, Helsinki,
Finland}
\curraddr{}
\email{lauri.oksanen@helsinki.fi}
\thanks{L.O. was supported by the European
Research Council of the European Union, grant 101086697 (LoCal), and the Research Council of
Finland, grants 347715, 353096 and 359182. Views and opinions expressed are those of the authors only and do not necessarily reflect
those of the European Union or the other funding organizations.}

\author{Andreas Rupp}
\address{Department of Mathematics, Saarland University, DE-66123 Saarbr\"ucken,
Germany}
\curraddr{}
\email{andreas.rupp@uni-saarland.de}
\thanks{A.R. was supported by the DFG grant 577175348.}

\subjclass[2020]{
65N21, % Numerical methods for inverse problems for boundary value problems involving PDEs
65N12, % Stability and convergence of numerical methods for boundary value problems involving PDEs
35R30  % Inverse problems for PDEs
}
\date{}

\begin{abstract}
We consider a scalar valued elliptic partial differential equation on a sufficiently smooth domain $\Omega$, subject to a regularized Tresca friction-type boundary condition on a subset $\Gamma$ of $\partial \Omega$.
The friction threshold, a positive function appearing in this boundary condition, is assumed to be unknown and serves as the coefficient to be recovered in our inverse problem.
Assuming that (i) the friction threshold 
lies in a finite dimensional space 
with known basis functions,
(ii) the right hand sides of the partial differential equation are known, and (iii) the solution to the partial differential equation on some small open subset $\omega \subset \Omega$ is available, we develop an iterative computational method for the recovery of the friction threshold. This algorithm is simple to implement and is based on piecewise linear finite elements.
We show that the proposed algorithm converges in second order to a function $a_h$ and, moreover, that $a_h$ converges in second order in the finite element's mesh size $h$ to the true (unknown) friction threshold.
We highlight our theoretical results by simulations that confirm our rates numerically.
\end{abstract}
\maketitle
\section{Introduction}
Understanding friction is key to the modelling of many mechanical systems in engineering and the geosciences. For example, the prediction of earthquakes or, in soil–structure interaction, landslides relies on modelling the friction forces on fault lines; modelling the flow of glaciers requires an accurate description of the friction between the ice and the bedrock. Yet the mathematical models of friction remain poorly understood. For example, the well‑known Coulomb friction model has not been shown to be well‑posed, whereas this is the case for the physically less realistic Tresca friction law \cite{DL76}.
In view of the importance of friction in applications, the numerical approximation of friction is an active research area; see \cite{CHR23} and references therein. 

Of crucial importance in all models is the friction threshold. This is the critical limit, defined by the friction coefficient, that determines whether the interface remains in stick or transitions to slip under given mechanical loading conditions. Hence, determination of the friction threshold is a necessary step for the practical applicability of friction models.
In most situations, the friction coefficient cannot be directly measured, since the contact surfaces are not accessible. It is therefore of interest to determine the friction threshold by inverse identification using measurements on an accessible part of the body, or internal non‑intrusive measurements obtained by scanning technology. To determine the friction coefficient from such measurements, one must solve an inverse problem subject to a partial differential equation modelling the mechanics of the system. This is a highly nontrivial task.
In most cases, a linearised friction model in the form of a Robin condition is used within a data‑fitting optimisation framework (see, for instance, \cite{PZSHG12}) . Here the friction threshold is represented simply by the Robin coefficient. While there is a relatively rich mathematical literature on the determination of Robin coefficients \cite{BurCenJinZhou25, CEJ04, CFM12, Harr21, JCELMP08,  JZ09, JZ10}, the inverse problem associated with a nonlinear friction law, or more generally with nonlinear Robin‑type conditions, appears to have received less attention \cite{AS07, SL09, VX98}. In particular there seem to be no works that have considered the numerical analysis of inverse problems subject to nonlinear Robin conditions.

In this work, we consider the reconstruction of the friction threshold -- i.e., a positive coefficient weighting a nonlinear interface law -- using measurements in a subset of the bulk domain. For the modelling, we employ a regularised Tresca friction law in a scalar model problem, leading to a nonlinear Robin boundary condition. Although this is a simplification compared to the classical variational inequality formulation, it is not necessarily less physically relevant \cite{KikOd88}.
Following the ideas of \cite{BurKnoOks25}, we regularise the inverse problem by assuming that the coefficient lies sufficiently close to an a priori known finite‑dimensional space. This assumption allows us to prove a Lipschitz stability estimate for functions in $H^1$ that are solutions to a certain linearised problem and that satisfy a finite‑dimensionality condition on the boundary (Theorem \ref{thm:stability}). Applying the theory of nonlinear approximation due to Keller \cite{Keller75}, this stability estimate, combined with a careful finite element analysis of the nonlinear Robin problem, allows us to prove (Theorem \ref{thm:main}):
(i) local uniqueness of the reconstruction;
(ii) second‑order convergence of the friction threshold for piecewise affine finite element discretisations;
(iii) convergence of Newton's method for initial guesses sufficiently close to the solution.
The theoretical results are validated on a set of numerical examples.
\subsection{The simplified and regularized Tresca model}
Let $\Omega_1$ and $\Omega_2$ be two bounded, connected and $C^3$-smooth domains in $\R^d$, $d\in \{2,3\}$ with $\Omega_1 \subset \subset \Omega_2$ and let $\Omega = \Omega_2 \setminus \overline{\Omega_1}$. We denote the boundary of $\Omega_1$ by $\Gamma$, i.e., $\partial \Omega_1 = \Gamma$ and the boundary of $\Omega_2$ by $\Gamma_0$.
For some sufficiently regular functions $f$ and $a$, where $a$ is positive, we consider the partial differential equation that asks to determine the function $u$ satisfying 
\begin{subequations}\label{eq:tresca}
\begin{align}
\Delta u \, &= \, f  \quad \text{in } \Omega\, , \\ \qquad 
|\partial_\nu u| \, \leq \, a\,   \text{ and }\,  u \partial_\nu u + a|u| \, &= \, 0 \quad \text{on } \Gamma\,  , \label{eq:trescaIntro} \\ 
u \, &= \, 0 \quad \text{on } \Gamma_0\, .
\end{align}
\end{subequations}
This is a scalar-valued, idealized model appearing in the study of friction problems.
The boundary condition on $\Gamma$ is the scalar-valued Tresca friction law (see also \cite[Sec.\@ 1.3]{GloLionsTre81}).
It is perhaps the simplest model for a stick-slip boundary condition: The region in which $u=0$ is the stick region, while the region in which $ u \neq 0$ is the slip region.
Observe that on $\Gamma$ the property $|\partial_\nu u| < a$ implies $u = 0$ since $u \partial_\nu u + a|u|=0$. 
Therefore a necessary condition for $u\neq 0$ is that $|\partial_\nu u| = a$.
The function $a$ takes the role of the friction threshold. 
It is the quantity that we assume to be unknown later on and that we aim to reconstruct.

Denoting by $\partial_c$ the subgradient of a convex function, one can write the Tresca boundary condition \eqref{eq:trescaIntro} on $\Gamma$ as
\begin{align}\label{eq:sub}
\partial_\nu u + a \partial_c(|u|) \ni 0 \, .
\end{align}
Regularity theory for problem \eqref{eq:tresca} is typically obtained via a mollification argument applied to \eqref{eq:sub}.
Writing $j(v) = |v|$ for the absolute value, the function $j$ is replaced by a suitable smooth
approximation $j_\varepsilon$, see e.g. \cite[Ex.\@ 5, p.\@ 38]{Bre72}. Further, writing $\beta_\varepsilon$ for the derivative of $j_\varepsilon$, the mollified problem reduces to
\begin{subequations}\label{eq:nonlin_robin_tresca}
\begin{align}
\Delta u \, &= \, f \quad \text{in } \Omega\,  ,\\
\partial_\nu u + a \beta_\varepsilon(u) \, &= \, 0 \quad \text{on } \Gamma\, , \quad  \\
u \, &= \, 0 \quad \text{on } \Gamma_0\, .
\end{align}
\end{subequations}
As $\varepsilon \to 0$ the function $j_\varepsilon$ resembles $j$. 
From an engineering perspective it is reasonable to work with the mollified model, see the discussion in \cite[Sec.\@ 10.4]{KikOd88} in an elastostatic context.
Accordingly, the nonlinear partial differential equation \eqref{eq:nonlin_robin_tresca} will be the center of our studies.
\section{Well-posedness and stability for the nonlinear model problem}
Let $\Omega$, $\Gamma$ and $\Gamma_0$ be defined as in the previous section.
Moreover, let $\beta \in C^3(\R)$ be some non-decreasing function, for which $\beta'\in C_b^2(\R)$ and $\beta(0)=0$.
The requirement for $\beta'$ to be in $C_b^2(\R)$ guarantees in particular that $\beta$ and its derivatives have a global Lipschitz constant.
Furthermore, let $a \in L^\infty(\Gamma)$ satisfy $0<  a_0 \leq a$.

Let $H_\diamond^1(\Omega) = \{ u \in H^1(\Omega) \, : \, u|_{\Gamma_0} = 0 \}$.
For $f \in H^1(\Omega)'$ and $g \in H^{-1/2}(\Gamma)$
we consider the nonlinear Robin-type problem, to determine $u\in H_\diamond^1(\Omega)$ such that
\begin{subequations}\label{eq:nonlin_robin}
\begin{align}
\Delta u \, &= \, f \quad \text{in } \Omega\,  ,\\
\partial_\nu u + a \beta(u) \, &= \, g \quad \text{on } \Gamma\,  \label{eq:nonlin_robin_weakbc}
\end{align}
\end{subequations}
is satisfied in the variational sense. The requirement $u=0$ on $\Gamma_0$ is included in the space $H_\diamond^1(\Omega)$ and will not be highlighted additionally.
The weak formulation to \eqref{eq:nonlin_robin} is to determine $u \in H_\diamond^1(\Omega)$ such that
\begin{align}\label{eq:nonlin_robin_weak}
b(u,v) \, = \, \ell_{f,g}(v) \quad \text{for all } v \in H_\diamond^1(\Omega)\, ,
\end{align}
where $b: H_\diamond^1(\Omega)^2 \to \R$ and $\ell_{f,g}: H_\diamond^1(\Omega) \to \R$ are defined by
\begin{align}\label{eq:bandl}
b(u,v)  =  \int_{\Omega} \nabla u \cdot \nabla v \dx + \int_{\Gamma} a \beta(u) v \ds\, , \; \; \ell_{f,g}(v)  =  -\int_\Omega fv \dx + \int_{\Gamma}gv \ds \, .
\end{align}
Note that $b$ is a functional that is nonlinear in the first and linear in the second variable.
Any function in $H_\diamond^1(\Omega)$ vanishes on $\Gamma_0$. Therefore, a compactness argument yields that
\begin{align}\label{eq:poincare-type-inequ}
\Vert u\Vert_{H^1(\Omega)} \, \lesssim \, \Vert \nabla u \Vert_{L^2(\Omega)} \quad \text{for all } u \in H_{\diamond}^1(\Omega)\, .
\end{align}
Existence and uniqueness of the solution $u \in H_\diamond^1(\Omega)$ satisfying \eqref{eq:nonlin_robin_weak} follows by the Browder--Minty theorem. We summarize this in the theorem below.
\begin{theorem}\label{thm:exuni}
There is a unique $u \in H_\diamond^1(\Omega)$ satisfying \eqref{eq:nonlin_robin_weak}. Moreover,
\begin{align}\label{eq:ubound}
\Vert u \Vert_{H^1(\Omega)} \, \lesssim \, \Vert f \Vert_{H^1(\Omega)'} + \Vert g \Vert_{H^{-1/2}(\Gamma)}\, .
\end{align}
\end{theorem}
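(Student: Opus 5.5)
We recast \eqref{eq:nonlin_robin_weak} as an operator equation and apply the Browder--Minty theorem. Define $A: H_\diamond^1(\Omega) \to H_\diamond^1(\Omega)'$ by $\langle A u, v\rangle = b(u,v)$. The functional $\ell_{f,g}$ lies in $H_\diamond^1(\Omega)'$, since
\[
|\ell_{f,g}(v)| \lesssim \bigl(\Vert f\Vert_{H^1(\Omega)'} + \Vert g\Vert_{H^{-1/2}(\Gamma)}\bigr)\Vert v\Vert_{H^1(\Omega)}
\]
by the trace theorem $H^1(\Omega)\to H^{1/2}(\Gamma)$. We must show that $A$ is well defined, hemicontinuous, strictly monotone and coercive.

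\textbf{Well-definedness and continuity.} Since $\beta(0)=0$ and $\beta$ is globally Lipschitz with constant $L$, we have $|\beta(s)|\le L|s|$. Hence $\beta(u)|_\Gamma\in L^2(\Gamma)$ for every $u\in H^1(\Omega)$, and $a\in L^\infty(\Gamma)$ gives
\[
\Bigl|\int_\Gamma a\beta(u)v\ds\Bigr| \le \Vert a\Vert_{L^\infty} L \Vert u\Vert_{L^2(\Gamma)}\Vert v\Vert_{L^2(\Gamma)}.
\]
The Lipschitz bound also shows that $A$ is Lipschitz continuous from $H_\diamond^1$ to its dual:
\[
|\langle Au-Aw,v\rangle| \le \Vert \nabla(u-w)\Vert\,\Vert\nabla v\Vert + \Vert a\Vert_\infty L\,\Vert u-w\Vert_{L^2(\Gamma)}\Vert v\Vert_{L^2(\Gamma)}.
\]
In particular $A$ is hemicontinuous.

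\textbf{Monotonicity.} Because $\beta$ is non-decreasing, $(\beta(u)-\beta(w))(u-w)\ge 0$ pointwise. Using also $a\geq a_0>0$,
\[
\langle Au-Aw,u-w\rangle \ge \Vert\nabla(u-w)\Vert_{L^2(\Omega)}^2 \gtrsim \Vert u-w\Vert_{H^1(\Omega)}^2
\]
by \eqref{eq:poincare-type-inequ}. So $A$ is strongly monotone, which yields both strict monotonicity and coercivity. For coercivity, take $w=0$ and note $A0=0$ since $\beta(0)=0$; then $\langle Au,u\rangle\gtrsim\Vert u\Vert_{H^1}^2$. Browder--Minty now gives existence of a solution, and uniqueness follows directly from the strong monotonicity.

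\textbf{The a priori bound.} Test \eqref{eq:nonlin_robin_weak} with $v=u$. The boundary term $\int_\Gamma a\beta(u)u\ds$ is nonnegative, because $\beta(s)s\ge 0$ follows from monotonicity together with $\beta(0)=0$. Dropping it, we obtain
\[
\Vert u\Vert_{H^1}^2\lesssim\Vert\nabla u\Vert^2\le \ell_{f,g}(u)\lesssim (\Vert f\Vert_{H^1(\Omega)'}+\Vert g\Vert_{H^{-1/2}(\Gamma)})\Vert u\Vert_{H^1}.
\]
Dividing by $\Vert u\Vert_{H^1}$ gives \eqref{eq:ubound}.

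No step is a genuine obstacle. The only point needing care is the functional setting: the nonlinear boundary term must be a bounded and continuous operator into the dual space. This is handled by the global Lipschitz property of $\beta$ combined with the trace theorem. Without a global Lipschitz bound one would need Sobolev embeddings of the trace and growth restrictions on $\beta$.
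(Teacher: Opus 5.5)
Your proposal is correct and follows essentially the same route as the paper: recast the weak form as $Au=L$, obtain continuity and strong monotonicity of $A$ from the monotonicity of $\beta$, $a\geq a_0>0$ and the Poincar\'e-type inequality \eqref{eq:poincare-type-inequ}, and invoke Browder--Minty. Your energy estimate from testing with $v=u$ is the paper's bound $\Vert A^{-1}L - A^{-1}0\Vert \lesssim \Vert L\Vert$ (using $A0=0$) written out directly. Your explicit use of the global Lipschitz bound on $\beta$ to show that $A$ maps into the dual space is a welcome bit of extra care, since the paper only cites continuity of $\beta$ at that point.
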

\begin{proof}
The variational equality \eqref{eq:nonlin_robin_weak} is (by the Riesz representation theorem) equivalent to a nonlinear operator equation, which is to determine $u \in H_\diamond^1(\Omega)$ that solves
\begin{align*}
&Au \, = \, L\, , \quad \text{where }\\
  b(u,v) \, = \, \langle Au, v \rangle_{H^1(\Omega)} \quad &\text{and} \quad \ell_{f,g}(v) \, = \, \langle L, v \rangle_{H^1(\Omega)} \quad \text{for all } u,v \in H_\diamond^1(\Omega)\, .
\end{align*}
The continuity of $\beta$ yields that $A$ is continuous.
We show that the nonlinear operator $A$ is strongly monotone that is,
\begin{align}\label{eq:strong_mon}
\langle Au- Av, u-v \rangle_{H^1(\Omega)} \, \gtrsim \Vert u-v \Vert_{H^1(\Omega)}^2 \quad \text{for all } u,v \in H_\diamond^1(\Omega)\, .
\end{align}
Then, by the Browder--Minty theorem (see e.g. \cite[Thm.\@ 26.A]{Zeid902b}) the operator $A^{-1}$ exists and is Lipschitz continuous. 
Moreover, due to the assumption that $\beta(0) = 0$ one finds that $u=0$ is a fixed point of $A$ and so, \eqref{eq:ubound} follows since
\begin{align*}
\Vert u \Vert_{H^1(\Omega)} \, \lesssim \, \Vert A^{-1}L - A^{-1}0 \Vert_{H^1(\Omega)} \, \lesssim \, \Vert L \Vert_{H^1(\Omega)'} \, \lesssim \,  \Vert f \Vert_{H^1(\Omega)'} + \Vert g \Vert_{H^{-1/2}(\Gamma)}\, .
\end{align*}
To show \eqref{eq:strong_mon}, let $u,v \in H_\diamond^1(\Omega)$. Then, we see that
\begin{align}\label{eq:comp1}
\langle Au- Av, u-v \rangle_{H^1(\Omega)}
= \Vert \nabla(u-v) \Vert_{L^2(\Omega)}^2 + \int_{\Gamma} a (\beta(u)-\beta(v)) (u-v) \ds \, .
\end{align}
Due to our assumption that $\beta$ is non-decreasing, the second term in \eqref{eq:comp1} is always positive.
Moreover, due to \eqref{eq:poincare-type-inequ} we find that
\begin{align*}
\Vert u-v \Vert_{H^1(\Omega)} \, \lesssim \, \Vert \nabla(u-v) \Vert_{L^2(\Omega)}\, .
\end{align*}
This yields \eqref{eq:strong_mon} and concludes the proof.
\end{proof}
With Theorem \ref{thm:exuni} the existence and uniqueness of a solution to \eqref{eq:nonlin_robin_weak} is secured.
An additional smoothness assumption $f \in L^2(\Omega)$ and $g \in H^{1/2}(\Gamma)$ is enough to guarantee that $u \in H^2(\Omega) \cap H_\diamond^1(\Omega)$ with norm dependency on $f$ and $g$. 
This was proven in \cite[Thm.\@ I.9, p.\@ 40]{Bre72} (with $g=0$, but the more general case follows similarly). 
For our purposes this regularity for $u$ is not sufficient. For our upcoming finite element analysis we require that $\beta'(u) \in C^1(\Gamma)$.
Therefore, we assume from now on and throughout this whole work that $a \in C^2(\Gamma)$ and that $f \in H^1(\Omega)$ and $g \in H^{3/2}(\Gamma)$.
Then, we can show the following theorem. The proof can be found in the appendix.
\begin{theorem}\label{thm:howp}
Let $f \in H^1(\Omega)$, $g \in H^{3/2}(\Gamma)$ and $a \in C^2(\Gamma)$. Then, the solution $u$ to \eqref{eq:nonlin_robin_weak} satisfies $u \in H^{3}(\Omega) \cap H_\diamond^1(\Omega)$.
Moreover, we have the following bounds
\begin{subequations}
\begin{align}\label{eq:H2bound}
\Vert u \Vert_{H^{2}(\Omega)} \, &\lesssim \, \Vert f \Vert_{L^2(\Omega)} + \Vert g \Vert_{H^{1/2}(\Gamma)}\, , \\ 
\begin{split}
\Vert u \Vert_{H^{3}(\Omega)} \, &\lesssim \, \Vert f \Vert_{H^1(\Omega)} + \Vert g \Vert_{H^{3/2}(\Gamma)} \\
& \phantom{\lesssim \, }+ \Vert a \Vert_{C^2(\Gamma)}\Vert\beta \Vert_{C^2(u(\Gamma))} \Vert u \Vert_{H^2(\Omega)} \big(1 + \Vert u \Vert_{H^2(\Omega)}^{1/2} \big)\, .
\end{split}
\end{align}
\end{subequations}
\end{theorem}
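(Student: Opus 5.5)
The plan is a bootstrap: treat the nonlinear boundary term as Neumann data for a linear problem and invoke linear elliptic regularity twice, first at the $H^2$ level and then at the $H^3$ level. With $u \in H_\diamond^1(\Omega)$ the unique solution from Theorem \ref{thm:exuni}, set $\tilde g := g - a\beta(u)$. Then $u$ is the weak solution of the linear mixed problem
\begin{align*}
\Delta u = f \ \text{in } \Omega, \quad \partial_\nu u = \tilde g \ \text{on } \Gamma, \quad u = 0 \ \text{on } \Gamma_0 .
\end{align*}
Since $\Gamma$ and $\Gamma_0$ are disjoint $C^3$ closed surfaces, there is no Dirichlet--Neumann junction. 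Standard shift theorems (localize with cutoffs near $\Gamma$ and near $\Gamma_0$) therefore give, for $k\in\{0,1\}$,
\begin{align*}
\Vert u\Vert_{H^{k+2}(\Omega)} \lesssim \Vert f\Vert_{H^k(\Omega)} + \Vert \tilde g\Vert_{H^{k+1/2}(\Gamma)} + \Vert u\Vert_{H^1(\Omega)} .
\end{align*}
The last term is controlled by \eqref{eq:ubound}.

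\textbf{$H^2$ step.} Here I would quote \cite[Thm.\@ I.9]{Bre72}, adapted to $g\neq 0$. The key point is that the constant must be independent of $a$ and $\beta$ beyond $a_0$, which is why the linear bound above alone does not suffice: it would bring in $\Vert a\beta(u)\Vert_{H^{1/2}(\Gamma)}$. Brezis's argument instead tests with difference quotients (tangential near $\Gamma$). Monotonicity of $\beta$ and $a\ge a_0>0$ let the boundary term $\int_\Gamma a\,(\beta(u)(\cdot+h)-\beta(u))(u(\cdot+h)-u)$ be discarded up to a lower-order commutator from the variation of $a$. This yields tangential $H^1$ control of $\nabla u$, and the normal second derivatives then follow from the equation. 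I would reproduce this sketch briefly and obtain \eqref{eq:H2bound}.

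\textbf{$H^3$ step.} Now $u\in H^2(\Omega)$, so $u|_\Gamma\in H^{3/2}(\Gamma)$, and I need $a\beta(u)\in H^{3/2}(\Gamma)$. This is the main obstacle, since it requires a composition estimate in a fractional space. I would bound
\begin{align*}
\Vert a\beta(u)\Vert_{H^{3/2}(\Gamma)} \lesssim \Vert a\Vert_{C^2(\Gamma)} \Vert \beta(u)\Vert_{H^{3/2}(\Gamma)} ,
\end{align*}
which holds because multiplication by a $C^2$ function is bounded on $H^{3/2}(\Gamma)$. For the composition, $\beta(u)$ is controlled in $L^2$ by $\Vert\beta'\Vert_\infty\Vert u\Vert$ using $\beta(0)=0$. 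The first tangential derivative is $\nabla_\Gamma\beta(u)=\beta'(u)\nabla_\Gamma u$, and I need it in $H^{1/2}(\Gamma)$. Rather than work with fractional norms directly, I would apply the trace theorem to an extension: $\beta(u)\in H^2(\Omega)$ with $\nabla^2\beta(u)=\beta'(u)\nabla^2u+\beta''(u)\nabla u\otimes\nabla u$. The first term is bounded by $\Vert\beta'\Vert_\infty\Vert u\Vert_{H^2}$. For the second, in $d\le3$ the embedding $H^2\subset H^{1,4}$ gives $\Vert\nabla u\Vert_{L^4}^2 \lesssim \Vert u\Vert_{H^1}^{1/2}\Vert u\Vert_{H^2}^{3/2}$ by Gagliardo--Nirenberg interpolation. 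Together these give
\begin{align*}
\Vert\beta(u)\Vert_{H^2(\Omega)} \lesssim \Vert\beta\Vert_{C^2}\, \Vert u\Vert_{H^2}\big(1+\Vert u\Vert_{H^2}^{1/2}\big),
\end{align*}
and the trace theorem $H^2(\Omega)\to H^{3/2}(\Gamma)$ finishes the composition bound.

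Since only values of $u$ on (a neighbourhood of) $\Gamma$ matter, I would localize with a cutoff supported near $\Gamma$. On that set the sup norms of $\beta',\beta''$ can be taken over the range of $u$, which is bounded by $H^2\subset C^0$; this gives the $\Vert\beta\Vert_{C^2(u(\Gamma))}$ factor, and I would be a little loose here. Inserting the composition bound into the $k=1$ shift estimate and absorbing $\Vert u\Vert_{H^1}$ into $\Vert u\Vert_{H^2}$ yields the stated $H^3$ bound.
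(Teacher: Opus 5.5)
Your architecture matches the paper's: Brezis for the $H^2$ step, then $a\beta(u)|_\Gamma\in H^{3/2}(\Gamma)$ fed into the linear shift theorem. The gap is in the composition estimate you use in place of the paper's, which does not give the stated bound.

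Your Gagliardo--Nirenberg step gives $\Vert\beta''(u)\nabla u\otimes\nabla u\Vert_{L^2}\lesssim\Vert\beta''\Vert_{L^\infty}\Vert u\Vert_{H^1(\Omega)}^{1/2}\Vert u\Vert_{H^2(\Omega)}^{3/2}$. This term is homogeneous of degree $2$ in $u$, and ``together these give'' silently drops the factor $\Vert u\Vert_{H^1(\Omega)}^{1/2}$.

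This is not a slip that a sharper interpolation inequality would repair. The intermediate claim $\Vert\beta(v)\Vert_{H^2(\Omega)}\lesssim(\Vert\beta'\Vert_{L^\infty}+\Vert\beta''\Vert_{L^\infty})\Vert v\Vert_{H^2(\Omega)}(1+\Vert v\Vert_{H^2(\Omega)}^{1/2})$ is false. Take the admissible $\beta(r)=2r+1-\cos r$, which satisfies $\beta(0)=0$, $1\le\beta'\le 3$ and $\beta'\in C_b^2(\R)$. Take $v=\lambda x_1\chi$ with a cutoff $\chi$ equal to $1$ near $\Gamma$ and $0$ near $\Gamma_0$; this is the solution for suitable smooth $f,g$. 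Near $\Gamma$ one has $\partial_{11}\beta(v)=\lambda^2\cos(\lambda x_1)$, so $\Vert\beta(v)\Vert_{H^2}\gtrsim\lambda^2$ on any neighbourhood of $\Gamma$. The right-hand side is only $O(\lambda^{3/2})$.

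The trace itself is harmless: $\Vert\beta(v)|_\Gamma\Vert_{H^{3/2}(\Gamma)}=O(\lambda^{3/2})$, for instance by interpolating between $H^1(\Gamma)$ and $H^2(\Gamma)$. The extension $\beta(u)$ is simply too rough in the interior. The degree $3/2$ is a genuinely fractional effect: in $H^{3/2}$ one uses that $\beta'$ is both bounded and Lipschitz, hence $|\beta'(s)-\beta'(t)|\lesssim|s-t|^{1/2}$. This is what the composition estimate \cite[Thm.\@ 11]{BoSi11} exploits.

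What your route does establish is $u\in H^3(\Omega)$, with a bound in which $\Vert u\Vert_{H^2(\Omega)}^{1/2}$ is replaced by $\Vert u\Vert_{H^1(\Omega)}^{1/2}\Vert u\Vert_{H^2(\Omega)}^{1/2}$. Via \eqref{eq:ubound}, this is the stated estimate only with an implicit constant that depends on the size of the data. That would suffice for the paper's later, purely qualitative use of the theorem, but it is not the statement.

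To get the stated form, do what the paper does. Localize $\beta(u|_\Gamma)$ with a partition of unity and $C^3$ charts to $H^{3/2}(\R^{d-1})$, and remove the cutoffs with the $C^2$-multiplier bound. Then apply the fractional composition estimate $\Vert\beta(w)\Vert_{H^{3/2}}\lesssim\Vert\beta\Vert_{C^2}\Vert w\Vert_{H^{3/2}}(1+\Vert w\Vert_{H^{3/2}}^{1/2})$ directly to $w=u|_\Gamma$, and finish with $\Vert u\Vert_{H^{3/2}(\Gamma)}\lesssim\Vert u\Vert_{H^2(\Omega)}$.

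A side remark on your $H^2$ step: the linear shift theorem would in fact suffice for \eqref{eq:H2bound} as stated. Indeed $\Vert a\beta(u)\Vert_{H^{1/2}(\Gamma)}\lesssim\Vert a\Vert_{C^1(\Gamma)}\Vert\beta'\Vert_{L^\infty}\Vert u\Vert_{H^1(\Omega)}$, since Lipschitz maps act boundedly on $H^{1/2}$ via the Gagliardo seminorm. The theorem does not claim constants independent of $a$ and $\beta$. Brezis's difference-quotient argument is what buys uniformity in $\beta$, for example for $\beta_\varepsilon$ as $\varepsilon\to 0$.
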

To emphasize the dependence of solutions to \eqref{eq:nonlin_robin} on the parameter $a \in C^2(\Gamma)$ we write $u^{(a)}$ instead of $u$.
In our analysis, a linearized version of \eqref{eq:nonlin_robin} plays a fundamental role.
For given $\dot{f} \in H^1(\Omega)'$, $\dot{g} \in H^{-1/2}(\Gamma)$ and $u^{(a)} \in H_\diamond^1(\Omega) \cap H^{5/2+\varepsilon}(\Omega)$ solving \eqref{eq:nonlin_robin}  the linearized problem is to determine $\dot{w}^{(a)} \in H_\diamond^1(\Omega)$ solving
\begin{subequations}\label{eq:linRob}
\begin{align}
\Delta \dot{w}^{(a)} \, &= \, \dot{f} \quad \text{in } \Omega\,  ,\\
\partial_\nu \dot{w}^{(a)} + a \beta'(u^{(a)})\dot{w}^{(a)} \, &= \, \dot{g} \quad \text{on } \Gamma \label{eq:linRob2}\, .
\end{align}
\end{subequations}
Note that $u^{(a)} \in H^{5/2+\varepsilon}(\Omega)$ implies that $u^{(a)}|_{\Gamma} \in H^{2+\varepsilon}(\Gamma)$.
In particular, due to Sobolev embedding, we get that $u^{(a)}|_{\Gamma} \in C^1(\Gamma)$ for the spatial dimensions $d \in \{2,3\}$.
Consequently, $a\beta'(u^{(a)}) \in C^1(\Gamma)$.
Under the assumption that $\dot{f} \in L^2(\Omega)$ and $\dot{g} \in H^{1/2}(\Gamma)$ we find that $\dot{w}^{(a)} \in H^2(\Omega)$ since $\Omega$ is smooth. This can be seen by combining regularity theorems for Dirichlet and Robin problems (see, e.g.\@, \cite[Thm.\@ 2.4.2.5, 2.4.2.6]{Gris85}) with interior regularity for elliptic problems.
We introduce the bilinear form $\dot{b}[u]$ and the linear form $\dot{\ell}_{\dot{f}, \dot{g}}$ associated to \eqref{eq:linRob}, defined by
\begin{subequations}\label{def:dotbdotell}
\begin{align}
\dot{b}[u](\dot{w},v) \, &= \, \int_{\Omega} \nabla \dot{w} \cdot \nabla v \dx + \int_{\Gamma} a \beta'(u) \dot{w} v \ds \, , \\
\dot{\ell}_{\dot{f}, \dot{g}} (v) \, &= \, -\int_\Omega \dot{f} v\dx + \int_{\Gamma}  \dot{g} v \ds \, .
\end{align}
\end{subequations}

We turn now to our inverse problem, in which we want to recover an unknown friction threshold denoted by $\tilde{a} \in C^2(\Gamma)$.
Suppose that we know $f \in H^1(\Omega)$, $g \in H^{3/2}(\Gamma)$ as well as $q = u^{(\tilde{a})}|_\omega$ for some open $\omega \subset \Omega$.
A sketch of the geometry is shown in Figure~\ref{fig:sketch}.
\begin{figure}[t!]
\centering 
\includegraphics[scale=.35]{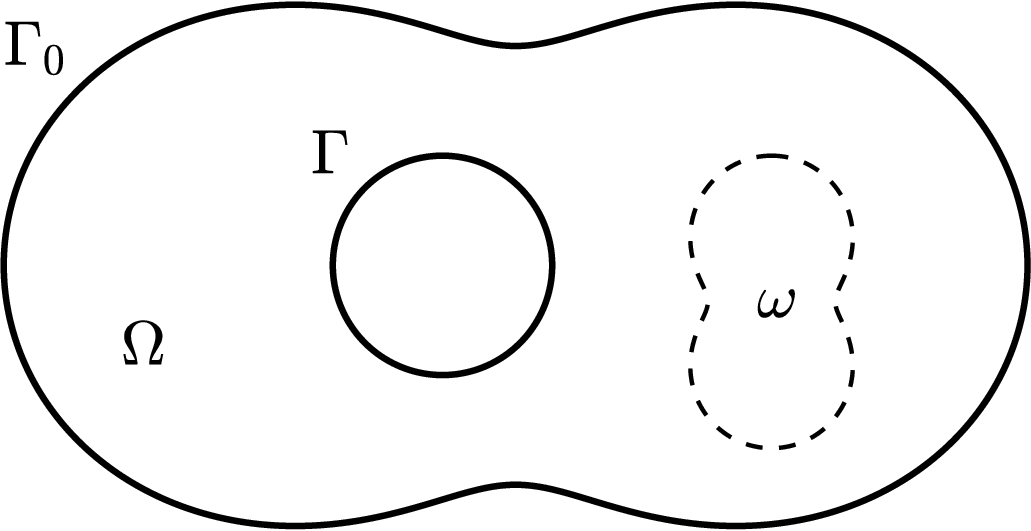}
\caption{Sketch of the geometrical configuration. For given $q=u^{(\tilde{a})}|_\omega \in L^2(\Omega)$, $f \in H^1(\Omega)$ and $g \in H^{3/2}(\Gamma)$ the aim is to reconstruct the function $\tilde{a} \in V_J \subset C^2(\Gamma)$.}
\label{fig:sketch}
\end{figure}
An approach based on optimal control theory would be to consider the problem to find
\begin{align*}
\argmin_{a \in C^2(\Gamma)} \Vert u^{(a)} - q \Vert_{L^2(\omega)}\, , \; \text{s.t. } u^{(a)} \text{ solves } b(u^{(a)}, v ) \, = \, \ell_{f,g}(v) \quad \text{for all } v \in H^1(\Omega)\, 
\end{align*}
with $b$ and $\ell_{f,g}$ defined as in \eqref{eq:bandl}.
Under the assumption that $\beta(u^{(\tilde{a})})$ does not vanish on any open subset of $\Gamma$, the unique continuation principle together with the boundary condition on $\Gamma$ from \eqref{eq:nonlin_robin_weakbc} guarantees that the recovery of $\tilde{a}$ is possible.
In numerical computation, however, we can neither minimize over all $C^2(\Gamma)$ nor obtain exact solutions to \eqref{eq:nonlin_robin_weak}.
Instead, $C^2(\Gamma)$ must be replaced by some $J$-dimensional subspace $V_J \subset C^2(\Gamma)$ and the exact solution by some finite dimensional approximation.
In this work we will consider finite element approximations.

Our approach here is similar to \cite{BurKnoOks25}. We cast the optimal control approach into an unconstrained optimization problem, which is to minimize the functional 
$\Theta : H_\diamond^1(\Omega) \times H_\diamond^1(\Omega) \times C^2(\Gamma) \to \R$ given by
\begin{align}\label{eq:theta}
\Theta(u,z,a) \, = \, \frac{1}{2}\Vert u - q \Vert_{L^2(\omega)}^2 + b(u, z ) - \ell_{f,g}(z) \, .
\end{align}
The search for saddle points for this functional leads to three conditions that arise by deriving $\Theta$ with respect to its three variables. These conditions are to find 
$u \in H_\diamond^1(\Omega)$, $z \in H_\diamond^1(\Omega)$ and $a \in C^2(\Gamma)$ such that
\begin{subequations}
\begin{align}
b(u,v) \, &= \, \ell_{f,g}(v) \quad \text{for all } v \in H_\diamond^1(\Omega)\, , \label{eq:c1}\\
\dot{b}[u](z,v) \, &= \, \dot{\ell}_{1_\omega (u - q), 0}(v) \quad \text{for all } v \in H_\diamond^1(\Omega)\, , \label{eq:c2} \\
\int_{\Gamma} \eta \beta(u) z \ds \, &= \, 0 \quad \text{for all } \eta \in C^2(\Gamma)\, . \label{eq:c3}
\end{align}
\end{subequations}
The first condition \eqref{eq:c1} simply means that the optimal $u$ should be a weak solution to \eqref{eq:nonlin_robin}. If this $u$ is found, it can be used in \eqref{eq:c2} to compute $z$, a weak solution to \eqref{eq:linRob} with $\dot{f} = 1_\omega(u-q)$ and $\dot{g} = 0$. Now that both $u$ and $z$ are known they can be used in \eqref{eq:c3}.
In summary, we want to find $a \in C^2(\Gamma)$ satisfying $F(a)=0$, where $\mathbf{F}(\cdot) \eta: C^2(\Gamma) \to \R$ 
\begin{align}\label{eq:idealFc}
\mathbf F(a)\eta \, = \, \int_{\Gamma} \eta \beta(u^{(a)}) z^{(a)} \ds\, \quad \text{for all } \eta \in C^2(\Gamma)\, ,
\end{align}
and $u^{(a)}$ and $z^{(a)}$ solve \eqref{eq:c1} and \eqref{eq:c2}, respectively.

By our previous argument, we cannot hope to satisfy $\mathbf{F}(a)\eta = 0$ for all $\eta \in C^2(\Gamma)$, numerically. 
We introduce a finite dimensional space $V_J \subset C^2(\Gamma)$ that is spanned by $J$ basis functions $\phi_1, \dots, \phi_J\in C^2(\Gamma)$ and assume that $\tilde{a} \in V_J$.
Moreover, we replace $u^{(a)}$ and $z^{(a)}$ by some finite element approximations $u_h^{(a)} \approx u^{(a)}$ and $z_h^{(a)} \approx z^{(a)}$ (more information is found in the next section) and instead of \eqref{eq:idealFc} we consider the problem to find roots of the functional $F_h: V_J \to \R^J$ defined by
\begin{align}\label{eq:Fh}
F_h(a) \, = \, [F_{h,1}(a), \dots, F_{h,J}(a)]\, , \quad \text{where} \quad F_{h,j}(a) \, = \, \int_{\Gamma} \phi_j \beta(u_h^{(a)}) z_h^{(a)} \ds\, . 
\end{align}

Before we continue with the definition and the convergence analysis of the finite element approximations we turn towards a stability theorem tailored to our nonlinear boundary condition.

Denote by $P:H^{-1/2}(\Gamma) \to \widetilde{V}_J$ the projection onto $\widetilde{V}_J\subset H^{1/2}(\Gamma)$, a finite dimensional subspace.
Moreover, let $Q = 1-P :  H^{-1/2}(\Gamma) \to H^{-1/2}(\Gamma)$.
\begin{theorem}\label{thm:stability}
Let $u \in H^{2+\varepsilon}(\Gamma)$. For any $v\in H_\diamond^1(\Omega)$ with
$\Delta v \in L^2(\Omega)$ it holds that
    \begin{align}\label{stability}
\Vert v \Vert_{H^1(\Omega)} \, \lesssim \, \Vert v \Vert_{L^2(\omega)} + \Vert Q(\p_\nu v + a\beta'(u)v)\Vert_{H^{-1/2}(\Gamma)} + \Vert\Delta v \Vert_{L^2(\Omega)} \, .
    \end{align}
\end{theorem}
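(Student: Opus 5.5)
We argue by contradiction combined with compactness, using unique continuation to rule out the limit. The estimate only involves $\Vert v\Vert_{L^2(\omega)}$, $\Vert Q(\cdot)\Vert$ and $\Delta v$, so a direct energy argument cannot work: the projected part $P(\p_\nu v + a\beta'(u)v)$ is uncontrolled. However, it lives in the finite dimensional space $\widetilde V_J$, which is what makes compactness available.

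\textbf{Step 1 (contradiction setup).} Suppose \eqref{stability} fails. Then there are $v_n \in H_\diamond^1(\Omega)$ with $\Delta v_n\in L^2(\Omega)$, $\Vert v_n\Vert_{H^1(\Omega)}=1$, and
\begin{align*}
\Vert v_n\Vert_{L^2(\omega)} + \Vert Q(\p_\nu v_n + a\beta'(u)v_n)\Vert_{H^{-1/2}(\Gamma)} + \Vert \Delta v_n\Vert_{L^2(\Omega)} \to 0 .
\end{align*}
Set $g_n = \p_\nu v_n + a\beta'(u)v_n \in H^{-1/2}(\Gamma)$, which is well defined since $\Delta v_n\in L^2$. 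By the trace and normal trace bounds, $\Vert g_n\Vert_{H^{-1/2}(\Gamma)}\lesssim 1$. Hence $Pg_n$ is bounded in the finite dimensional space $\widetilde V_J$, and a subsequence converges, $Pg_n\to g_*$. Then $g_n = Pg_n + Qg_n \to g_*$ in $H^{-1/2}(\Gamma)$.

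\textbf{Step 2 (strong convergence in $H^1$).} The coefficient $a\beta'(u)\ge 0$ is in $C^1(\Gamma)$ by the discussion after \eqref{eq:linRob}, and $\beta'\ge0$, $a\ge a_0$. So $\dot b[u]$ is coercive on $H^1_\diamond(\Omega)$ by \eqref{eq:poincare-type-inequ}, exactly as in the proof of Theorem \ref{thm:exuni}. The linear problem \eqref{eq:linRob} is therefore well posed with
\begin{align*}
\Vert w\Vert_{H^1(\Omega)}\lesssim \Vert \dot f\Vert_{H^1(\Omega)'} + \Vert \dot g\Vert_{H^{-1/2}(\Gamma)} .
\end{align*}
Each $v_n$ solves \eqref{eq:linRob} with data $(\Delta v_n, g_n)$, and this data converges to $(0,g_*)$. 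By linearity, $v_n\to v_*$ strongly in $H^1(\Omega)$, where $v_*$ solves $\Delta v_*=0$, $\p_\nu v_*+a\beta'(u)v_*=g_*$, $v_*|_{\Gamma_0}=0$. In particular $\Vert v_*\Vert_{H^1(\Omega)}=1$.

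\textbf{Step 3 (unique continuation).} Since $v_n\to v_*$ in $L^2(\omega)$, we get $v_*=0$ on $\omega$. As $v_*$ is harmonic in the connected set $\Omega$, unique continuation gives $v_*\equiv 0$. This contradicts $\Vert v_*\Vert_{H^1(\Omega)}=1$, which proves \eqref{stability}.

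We expect the main obstacle to be the functional-analytic bookkeeping rather than any deep estimate. One point is justifying that $\p_\nu v$ makes sense in $H^{-1/2}(\Gamma)$ with $\Vert \p_\nu v\Vert_{H^{-1/2}(\Gamma)}\lesssim \Vert v\Vert_{H^1(\Omega)}+\Vert\Delta v\Vert_{L^2(\Omega)}$; this follows from Green's formula. The other is checking that $P$ is bounded on $H^{-1/2}(\Gamma)$. We also need connectedness of $\Omega=\Omega_2\setminus\overline{\Omega_1}$ for the unique continuation in Step 3. Interestingly, the finite dimensionality is used only through compactness of $P$, and the regularity of $u$ only to make $a\beta'(u)$ a bounded multiplier on traces.
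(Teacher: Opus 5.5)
Your proof is correct, but it takes a different route from the paper.

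\textbf{The paper's route.} The paper argues directly. For $g\in\widetilde V_J$ it takes the harmonic solution $w$ of the Robin problem with data $g$. It uses unique continuation to show that $A\colon g\mapsto w|_\omega$ is injective. Finite dimensionality then turns this into the quantitative bound $\Vert g\Vert_{H^{1/2}(\Gamma)}\lesssim\Vert Ag\Vert_{L^2(\omega)}$. This bound is fed into the direct argument of \cite[Thm.\@ 2.2]{BurKnoOks25}: the part of $v$ carrying $P(\p_\nu v+a\beta'(u)v)$ is controlled through $\Vert v\Vert_{L^2(\omega)}$, and the remainder by well-posedness.

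\textbf{Your route.} You argue by compactness and contradiction. Finite dimensionality enters only as compactness of $P$, and unique continuation is applied once, to the strong $H^1$ limit $v_*$.

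\textbf{What your version buys.} It is self-contained, whereas the paper defers the final step to another reference. It makes the minimal hypotheses visible: only $a\beta'(u)\in L^\infty(\Gamma)$ with $a\beta'(u)\ge 0$, plus connectedness of $\Omega$, is used. It also extends verbatim to any compact $P$ on $H^{-1/2}(\Gamma)$. Regarding the obstacle you flagged, you do not actually need boundedness of $P$, since $\Vert Pg_n\Vert_{H^{-1/2}(\Gamma)}\le\Vert g_n\Vert_{H^{-1/2}(\Gamma)}+\Vert Qg_n\Vert_{H^{-1/2}(\Gamma)}$ is bounded anyway.

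\textbf{What the paper's version buys.} Its constant has explicit structure: it is built from Lax--Milgram and trace constants together with the single quantity $\Vert A^{-1}\Vert$ on $\widetilde V_J$. So its dependence on $\widetilde V_J$ and on the coefficient $a\beta'(u)$ can be tracked. This matters when the theorem is applied with $\widetilde V_J=\mathrm{span}\{\phi_j\beta(u^{(a)})\}$ for different $a$. The contradiction argument only yields some constant for each fixed configuration.
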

The implicit constant in \eqref{stability} depends on the finite dimensional subspace $\widetilde{V}_J$. 
\begin{proof}
The proof uses ideas from \cite[Thm.\@ 2.2]{BurKnoOks25} and \cite[Proof of Thm.\@ 3.3]{BurOksZhi25}.
For $g \in \widetilde{V}_J$, let $w \in H_{\diamond}^1(\Omega)$ be the solution to 
\begin{subequations}\label{eq:linRobp}
\begin{align}
\Delta w \, &= \, 0 \quad \text{in } \Omega\,  ,\\
\partial_\nu w + a \beta'(u)w  \, &= \, g \quad \text{on } \Gamma\, .\label{eq:linRob2p}
\end{align}
\end{subequations}
The boundary condition \eqref{eq:linRob2p} implies that $Q(\partial_\nu w + a \beta'(u) w) = 0$ on $\Gamma$.
The operator $A : \widetilde{V}_J \to L^2(\omega)$, $Ag = w|_\omega$, where $w$ solves \eqref{eq:linRobp} is injective, which follows from the unique continuation principle for elliptic equations.
As the linear subspace $\widetilde{V}_J$ is finite dimensional, there is a norm such that $A : \widetilde{V}_J \to A(\widetilde{V}_J)$ is an isomorphism.
Since all norms are equivalent on finite dimensional spaces, it holds that
\begin{align}\label{eq:normequ}
\Vert g \Vert_{H^{1/2}(\Gamma)} \, \lesssim \, \Vert Ag \Vert_{L^2(\omega)} \quad \text{for all } g \in \widetilde{V}_J\, .
\end{align}
For any $w \in H_\diamond^1(\Omega)$ satisfying \eqref{eq:linRobp} it holds that
\begin{align*}
A(P(\partial_\nu w + a \beta'(u) w)) \, = \, w|_\omega \;\; \text{since}\; \;
P(\partial_\nu w + a \beta'(u) w) \, = \, \partial_\nu w + a \beta'(u) w \quad \text{on } \Gamma
\end{align*}
and due to \eqref{eq:normequ} one finds
\begin{align*}
\Vert P(\partial_\nu w + a \beta'(u) w) \Vert_{L^2(\Gamma)} \, \lesssim \, \Vert w \Vert_{L^2(\omega)}
\, \lesssim \, \Vert w \Vert_{L^2(\Omega)} \, .
\end{align*}
This inequality may be used as the replacement for \cite[Eq.\@ (2.18)]{BurKnoOks25} in the proof of \cite[Thm.\@ 2.2]{BurKnoOks25} and the rest of the proof can be done analogously.
\end{proof}

\section{Finite element analysis for the nonlinear and linearized problems}
We consider a quasi-uniform triangulation $\mathcal{T}_h$ of the $C^3$-smooth domain $\Omega$ that allows for curved patches. 
In particular, we treat the situation, in which the triangulation is assumed to be exact.
On these meshes we consider a $C^0$ finite element space $V_h \subset H^1(\Omega)$, consisting of piecewise linear functions (after mapping of the curved tetrahedra or triangles to a reference element).
Moreover, we define $V_{\diamond,h} = V_h \cap H^1_{\diamond}(\Omega)$, the space of piecewise linear functions (after transform to the reference element) with vanishing trace on $\Gamma_0$. 
In \cite[Cor.\@ 5.2]{Ber89} it is shown that there is an interpolation operator $\Pi_h : H^2(\Omega) \to V_{\diamond,h}$ that satisfies the estimate
\begin{align}\label{eq:interpest}
\Vert u - \Pi_hu \Vert_{L^2(\Omega)} + h \Vert u - \Pi_hu \Vert_{H^1(\Omega)} \, \lesssim \, h^2 \Vert u \Vert_{H^2(\Omega)} \, .
\end{align}
The finite element approximation to \eqref{eq:nonlin_robin_weak} is to find $u_h^{(a)} \in V_{\diamond,h}$ such that
\begin{align}\label{eq:uh}
b(u_h^{(a)}, v) \, = \, \ell_{f,g}(v) \quad \text{for all } v \in V_{\diamond,h}\, .
\end{align}
The same argumentation as in Theorem~\ref{thm:exuni} gives that such a unique $u_h^{(a)} \in V_{\diamond,h}$ exists and that it satisfies the bound
\begin{align*}
\Vert u_h^{(a)} \Vert_{H^1(\Omega)} \, \lesssim \, \Vert f \Vert_{L^2(\Omega)} + \Vert g \Vert_{H^{-1/2}(\Gamma)}\, .
\end{align*}
For the approximation error in $H^1(\Omega)$ we obtain the following lemma.
\begin{lemma}\label{lem:H1}
Let $u^{(a)} \in H_\diamond^1(\Omega)$ be the unique solution to \eqref{eq:nonlin_robin_weak} and let $u_h^{(a)} \in V_{\diamond,h}$ solve \eqref{eq:uh}. Then, the approximation error in the $H^1$-norm can be bounded via
\begin{align}\label{eq:H1bound}
\Vert u^{(a)}-u_h^{(a)} \Vert_{H^1(\Omega)} \, \lesssim \, h \Vert u^{(a)} \Vert_{H^2(\Omega)}\, .
\end{align}
\end{lemma}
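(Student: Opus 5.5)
The plan is the nonlinear analogue of Céa's lemma: combine the strong monotonicity \eqref{eq:strong_mon} from the proof of Theorem~\ref{thm:exuni} with Galerkin orthogonality and the Lipschitz continuity of $\beta$. Write $u = u^{(a)}$, $u_h = u_h^{(a)}$ and $e = u - u_h$. Since $V_{\diamond,h} \subset H_\diamond^1(\Omega)$, subtracting \eqref{eq:uh} from \eqref{eq:nonlin_robin_weak} gives the Galerkin orthogonality
\begin{align*}
b(u,v) - b(u_h,v) \, = \, 0 \quad \text{for all } v \in V_{\diamond,h}\, .
\end{align*}

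Let $\Pi_h u \in V_{\diamond,h}$ be the interpolant from \eqref{eq:interpest}. Testing the orthogonality relation with $v = \Pi_h u - u_h \in V_{\diamond,h}$ lets us replace the second argument $u - u_h$ by $u - \Pi_h u$. Strong monotonicity then yields
\begin{align*}
\Vert e \Vert_{H^1(\Omega)}^2 \, \lesssim \, b(u,e) - b(u_h,e) \, = \, b(u,u-\Pi_h u) - b(u_h,u-\Pi_h u)\, .
\end{align*}
Expanding the right-hand side gives
\begin{align*}
\int_\Omega \nabla e \cdot \nabla (u-\Pi_h u) \dx + \int_\Gamma a\big(\beta(u)-\beta(u_h)\big)(u-\Pi_h u) \ds\, .
\end{align*}
The gradient term is bounded by Cauchy--Schwarz. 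For the boundary term we use $a \in L^\infty(\Gamma)$, the global Lipschitz bound $|\beta(u)-\beta(u_h)| \le \Vert \beta' \Vert_{L^\infty} |e|$, and the trace inequality $\Vert w \Vert_{L^2(\Gamma)} \lesssim \Vert w \Vert_{H^1(\Omega)}$, applied both to $e$ and to $u - \Pi_h u$. This gives
\begin{align*}
\Vert e \Vert_{H^1(\Omega)}^2 \, \lesssim \, \Vert e \Vert_{H^1(\Omega)} \Vert u-\Pi_h u \Vert_{H^1(\Omega)}\, .
\end{align*}
Dividing by $\Vert e \Vert_{H^1(\Omega)}$ (the case $e = 0$ is trivial) and invoking \eqref{eq:interpest} gives $\Vert e \Vert_{H^1(\Omega)} \lesssim h \Vert u \Vert_{H^2(\Omega)}$, which is \eqref{eq:H1bound}. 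The regularity $u \in H^2(\Omega)$ needed here is provided by Theorem~\ref{thm:howp}.

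The main point to get right is the nonlinear boundary term. There is no bilinear structure, so continuity of $b$ in its first argument has to come from the global Lipschitz property of $\beta$, which follows from $\beta' \in C_b^2(\R)$. Once that is used, the implicit constant depends only on $\Vert a \Vert_{L^\infty(\Gamma)}$, $\Vert \beta' \Vert_{L^\infty}$, the trace constant and the monotonicity constant, and is independent of $h$ and $u$. A second point to check is that the interpolant really lies in $V_{\diamond,h}$, i.e.\ vanishes on $\Gamma_0$, so that it is an admissible test function in \eqref{eq:uh}. This is guaranteed by the cited result \cite[Cor.\@ 5.2]{Ber89}, since exact curved elements are used.
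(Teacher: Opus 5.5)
Your proposal is correct and follows essentially the same route as the paper. Both arguments use the nonlinear Galerkin identity, strong monotonicity (from the Poincar\'e-type inequality \eqref{eq:poincare-type-inequ} and monotonicity of $\beta$) for the lower bound, Cauchy--Schwarz with the global Lipschitz continuity of $\beta$ and a trace bound for the upper bound, and then the interpolation estimate \eqref{eq:interpest}. The only cosmetic difference is that the paper first states the C\'ea-type bound with an infimum over $v_h \in V_{\diamond,h}$, whereas you insert $\Pi_h u$ directly.
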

\begin{proof}
The (nonlinear) Galerkin identity corresponding to the problem \eqref{eq:nonlin_robin} reads
\begin{multline}\label{eq:nlgal}
b(u^{(a)}, u^{(a)}-u_h^{(a)}) - b(u_h^{(a)}, u^{(a)}-u_h^{(a)}) \\
\, = \, b(u^{(a)}, u^{(a)}-v_h) - b(u_h^{(a)}, u^{(a)}-v_h)
\end{multline}
for all $v_h \in V_{\diamond,h}$. Due to \eqref{eq:poincare-type-inequ} the left hand side in \eqref{eq:nlgal} satisfies 
\begin{align*}
\Vert u^{(a)} - u_h^{(a)} \Vert_{H^1(\Omega)}^2 \, \leq \, b(u^{(a)}, u^{(a)}-u_h^{(a)}) - b(u_h^{(a)}, u^{(a)}-u_h^{(a)})\, .
\end{align*}
The right hand side of \eqref{eq:nlgal} can be bounded by using the Cauchy--Schwarz inequality together with the Lipschitz continuity of $\beta$ to get that
\begin{align*}
b(u^{(a)}, u^{(a)}-v_h) - b(u_h^{(a)}, u^{(a)}-v_h) \, \lesssim \, 
\Vert u^{(a)}- u_h^{(a)} \Vert_{H^1(\Omega)} \Vert u^{(a)}-v_h \Vert_{H^1(\Omega)} \, 
\end{align*}
with an implicit constant depending on $\Vert a \Vert_{C^1(\Gamma)}$ and $\Vert \beta' \Vert_{L^\infty(\R)}$. Consequently, we obtain an analogy to C\'ea's Lemma, which is
\begin{align*}
\Vert u^{(a)} - u_h^{(a)} \Vert_{H^1(\Omega)} \, \lesssim \, \inf_{v_h \in V_{\diamond,h}}\Vert u^{(a)}-v_h \Vert_{H^1(\Omega)} \, .
\end{align*}
Now, \eqref{eq:H1bound} can be concluded
by using the interpolation estimate in \eqref{eq:interpest}. 
\end{proof}
When studying the linearized problem \eqref{eq:linRob} we cannot assume to know $u^{(a)}$, the exact solution to the nonlinear problem \eqref{eq:nonlin_robin}.
Therefore, we replace $u^{(a)}$ in the left hand side of \eqref{eq:linRob2} by $u_h^{(a)}$ the finite element approximation defined in \eqref{eq:uh}.
The next lemma shows that the corresponding finite element approximation $\dot{w}_h^{(a)}$ still converges with respect to the same error bound in the mesh size to $\dot{w}^{(a)}$.
\begin{lemma}\label{lem:felinearized}
Let $\dot{w}^{(a)} \in H_\diamond^1(\Omega)$ and $\dot{w}_h^{(a)}\in V_{\diamond,h}$ be the unique functions satisfying
\begin{subequations}\label{eq:weakform}
\begin{align}
\dot{b}[u^{(a)}](\dot{w}^{(a)}, v) \, = \, \dot{\ell}_{\dot{f}, \dot{g}}(v)\quad \text{for all } v \in H_{\diamond}^1(\Omega)\, , \label{eq:udot} \\
\dot{b}[u_h^{(a)}](\dot{w}_h^{(a)}, v) \, = \, \dot{\ell}_{\dot{f}, \dot{g}}(v)\quad \text{for all } v \in V_{\diamond,h}\, \label{eq:uhdot}
\end{align}
\end{subequations}
for given $\dot{f} \in L^2(\Omega)$ and $\dot{g} \in H^{1/2}(\Gamma)$ where $\dot{b}$ and $\dot{\ell}$ are defined in \eqref{def:dotbdotell}. Then,
\begin{align}\label{eq:linearizedH1}
\Vert \dot{w}^{(a)} - \dot{w}_h^{(a)} \Vert_{H^1(\Omega)} \, \lesssim \, h \Vert \dot{w}^{(a)} \Vert_{H^2(\Omega)} \, .
\end{align}
The implicit constant in \eqref{eq:linearizedH1} depends on $a$ and on $u^{(a)}$.
\end{lemma}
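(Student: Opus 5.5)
The argument is a Strang-type perturbation of the standard C\'ea argument. The discrete form $\dot b[u_h^{(a)}]$ differs from the continuous form $\dot b[u^{(a)}]$ only in the boundary coefficient, and that difference is controlled by Lemma~\ref{lem:H1}.

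\textbf{Step 1 (coercivity).} Since $\beta$ is non-decreasing and $a\ge a_0>0$, we have $a\beta'(u_h^{(a)})\ge 0$ on $\Gamma$. Together with \eqref{eq:poincare-type-inequ} this gives $\dot b[u_h^{(a)}](v,v)\gtrsim \Vert v\Vert_{H^1(\Omega)}^2$ for all $v\in H_\diamond^1(\Omega)$, uniformly in $h$. The same holds for $\dot b[u^{(a)}]$. This yields existence and uniqueness of $\dot w^{(a)}$ and $\dot w_h^{(a)}$ by Lax--Milgram. Continuity of $\dot b[u_h^{(a)}]$ follows from $\Vert\beta'\Vert_{L^\infty}<\infty$, $a\in C^2(\Gamma)$ and the trace theorem.

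\textbf{Step 2 (perturbed Galerkin orthogonality).} Subtracting \eqref{eq:uhdot} from \eqref{eq:udot} tested with $v_h\in V_{\diamond,h}$ gives
\begin{align*}
\dot b[u_h^{(a)}](\dot w^{(a)}-\dot w_h^{(a)},v_h) \, = \, \int_\Gamma a\big(\beta'(u_h^{(a)})-\beta'(u^{(a)})\big)\dot w^{(a)} v_h \ds .
\end{align*}
Set $e_h=\Pi_h\dot w^{(a)}-\dot w_h^{(a)}$. By coercivity,
\begin{align*}
\Vert e_h\Vert_{H^1(\Omega)}^2 \, \lesssim \, \dot b[u_h^{(a)}](\Pi_h\dot w^{(a)}-\dot w^{(a)},e_h)+\int_\Gamma a\big(\beta'(u_h^{(a)})-\beta'(u^{(a)})\big)\dot w^{(a)} e_h \ds .
\end{align*}
By continuity and \eqref{eq:interpest}, the first term is bounded by $h\Vert\dot w^{(a)}\Vert_{H^2(\Omega)}\Vert e_h\Vert_{H^1(\Omega)}$.

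\textbf{Step 3 (consistency term — the main obstacle).} The boundary integral must also be bounded by $h\Vert \dot w^{(a)}\Vert_{H^2(\Omega)}\Vert e_h\Vert_{H^1(\Omega)}$. Since $\beta'$ is globally Lipschitz, $|\beta'(u_h^{(a)})-\beta'(u^{(a)})|\le C|u_h^{(a)}-u^{(a)}|$. H\"older's inequality on $\Gamma$ with exponents $(2,4,4)$ in $d\le 3$ gives
\begin{align*}
\Big|\int_\Gamma \cdots\Big| \, \lesssim \, \Vert u^{(a)}-u_h^{(a)}\Vert_{L^2(\Gamma)}\Vert \dot w^{(a)}\Vert_{L^4(\Gamma)}\Vert e_h\Vert_{L^4(\Gamma)}.
\end{align*}
For the last two factors we use the trace embedding $H^1(\Omega)\to H^{1/2}(\Gamma)\subset L^4(\Gamma)$, which holds since $\dim\Gamma\le 2$. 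This bounds them by $\Vert\dot w^{(a)}\Vert_{H^1(\Omega)}$ and $\Vert e_h\Vert_{H^1(\Omega)}$ respectively. For the first factor, the trace inequality and Lemma~\ref{lem:H1} give $\Vert u^{(a)}-u_h^{(a)}\Vert_{L^2(\Gamma)}\lesssim \Vert u^{(a)}-u_h^{(a)}\Vert_{H^1(\Omega)}\lesssim h\Vert u^{(a)}\Vert_{H^2(\Omega)}$. Alternatively, since $\dot w^{(a)}\in H^2(\Omega)\subset L^\infty$, one can simply bound it in $L^\infty(\Gamma)$ and avoid $L^4$. Either way the term is $\lesssim h\Vert u^{(a)}\Vert_{H^2(\Omega)}\Vert \dot w^{(a)}\Vert_{H^2(\Omega)}\Vert e_h\Vert_{H^1(\Omega)}$. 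This is where the constant's dependence on $a$ and $u^{(a)}$ enters, through $\Vert a\Vert_{C(\Gamma)}$, $\mathrm{Lip}(\beta')$ and $\Vert u^{(a)}\Vert_{H^2(\Omega)}$, the latter finite by Theorem~\ref{thm:howp}.

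\textbf{Step 4 (conclusion).} Dividing by $\Vert e_h\Vert_{H^1(\Omega)}$ gives $\Vert e_h\Vert_{H^1(\Omega)}\lesssim h\Vert\dot w^{(a)}\Vert_{H^2(\Omega)}$. The triangle inequality $\Vert\dot w^{(a)}-\dot w_h^{(a)}\Vert_{H^1}\le \Vert \dot w^{(a)}-\Pi_h\dot w^{(a)}\Vert_{H^1}+\Vert e_h\Vert_{H^1}$ together with \eqref{eq:interpest} then yields \eqref{eq:linearizedH1}. The regularity $\dot w^{(a)}\in H^2(\Omega)$ used throughout is the one noted after \eqref{eq:linRob} for $\dot f\in L^2(\Omega)$, $\dot g\in H^{1/2}(\Gamma)$.
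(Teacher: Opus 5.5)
Your argument is correct, but it is organized differently from the paper's. The paper works with the full error $\dot e^{(a)}=\dot w^{(a)}-\dot w_h^{(a)}$ and the mixed identity
\begin{multline*}
\dot b[u^{(a)}](\dot w^{(a)},\dot e^{(a)})-\dot b[u_h^{(a)}](\dot w_h^{(a)},\dot e^{(a)}) \\
= \dot b[u^{(a)}](\dot w^{(a)},\dot w^{(a)}-v)-\dot b[u_h^{(a)}](\dot w_h^{(a)},\dot w^{(a)}-v), \quad v\in V_{\diamond,h},
\end{multline*}
which mirrors the nonlinear Galerkin identity used for Lemma~\ref{lem:H1}.

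Expanding the left-hand side produces exactly your consistency integral. This is the paper's $T_1$, bounded as in your Step~3 by H\"older with exponents $(2,4,4)$, $H^{1/2}(\Gamma)\subset L^4(\Gamma)$ and Lemma~\ref{lem:H1}. The right-hand side, however, mixes the continuous and discrete forms. Besides the interpolation term $T_2$, it yields a cross term $T_3$ that must be split into three further pieces. One of these is $O(h^2\Vert\dot w^{(a)}\Vert_{H^2(\Omega)}^2)$ and carries no factor of the error, which is why the paper must close with the absorption step $x^2\le xy+z\Rightarrow x^2\le y^2+2z$.

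You instead estimate the discrete error $e_h=\Pi_h\dot w^{(a)}-\dot w_h^{(a)}\in V_{\diamond,h}$ using only the discrete form $\dot b[u_h^{(a)}]$. Its coercivity and continuity are uniform in $h$ because $\beta'\ge 0$, $\beta'\in C_b^2(\R)$ and \eqref{eq:poincare-type-inequ} hold. This is Strang's first lemma. The coefficient perturbation enters as a single consistency term, and every term carries the factor $\Vert e_h\Vert_{H^1(\Omega)}$, so you can divide directly and finish with the triangle inequality and \eqref{eq:interpest}.

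The ingredients are the same, and the constant depends on $a$, $\beta'$ and $\Vert u^{(a)}\Vert_{H^2(\Omega)}$ in essentially the same way. Your bookkeeping is shorter and isolates the $O(h)$ perturbation of the Robin coefficient as the only error source beyond best approximation. The paper's version mainly buys structural parallelism with the proof of Lemma~\ref{lem:H1}.
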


\begin{proof}
Higher order regularity results yield that $\dot{w}^{(a)} \in H^2(\Omega) \cap H_\diamond^{1}(\Omega)$ and
\begin{align*}
\Vert \dot{w}^{(a)} \Vert_{H^2(\Omega)} \, \lesssim \,  \Vert \dot{f} \Vert_{L^2(\Omega)} + \Vert \dot{g} \Vert_{H^{1/2}(\Gamma)} \, .
\end{align*}
We abbreviate $\dot{e}^{(a)} = \dot{w}^{(a)}-\dot{w}_h^{(a)}$ and use \eqref{eq:weakform} to see that
\begin{multline}\label{eq:altgal}
\dot{b}[u^{(a)}](\dot{w}^{(a)},\dot{e}^{(a)}) - \dot{b}[u_h^{(a)}](\dot{w}_h^{(a)},\dot{e}^{(a)}) \\
\, = \, \dot{b}[u^{(a)}](\dot{w}^{(a)},\dot{w}^{(a)} - v) - \dot{b}[u_h^{(a)}](\dot{w}_h^{(a)},\dot{w}^{(a)} - v) \quad \text{for all } v \in V_{\diamond,h} \, .
\end{multline}
The left hand side of \eqref{eq:altgal} is
\begin{multline}\label{eq:lhs}
\Vert \nabla \dot{e}^{(a)} \Vert_{L^2(\Omega)}^2+ \int_{\Gamma}a \left( \beta'(u^{(a)})\dot{w}^{(a)} - \beta'(u_h^{(a)}) \dot{w}_h^{(a)} \right) \dot{e}^{(a)}\ds \\
\, = \, \Vert \nabla \dot{e}^{(a)} \Vert_{L^2(\Omega)}^2 + \int_{\Gamma} a \beta'(u_h^{(a)})(\dot{e}^{(a)})^2 \ds + \int_{\Gamma}a ( \beta'(u^{(a)}) - \beta'(u_h^{(a)}))\dot{w}^{(a)}\dot{e}^{(a)} \ds.
\end{multline}
The first term on the right hand side of \eqref{eq:lhs} can be bounded from below by $\Vert \dot{e}^{(a)}\Vert_{H^1(\Omega)}^2$ by using the Poincar\'e type inequality \eqref{eq:poincare-type-inequ}. 
The second term on the right hand side of \eqref{eq:lhs} is positive.
Returning to \eqref{eq:altgal} now shows that
\begin{align}\label{eq:spliterror}
\begin{split}
\Vert \dot{e}^{(a)} \Vert_{H^1(\Omega)}^2\, &\lesssim \, \left| \int_{\Gamma}a ( \beta'(u^{(a)}) - \beta'(u_h^{(a)}))\dot{w}^{(a)}\dot{e}^{(a)} \ds \right| \\
& \phantom{\lesssim \, } + \left|\int_\Omega \nabla \dot{e}^{(a)} \cdot \nabla (\dot{w}^{(a)} - v) \dx \right| \\
& \phantom{\lesssim \, } + \left|\int_{\Gamma} a \left( \beta'(u^{(a)})\dot{w}^{(a)} - \beta'(u_h^{(a)})\dot{w}_h^{(a)}\right) (\dot{w}^{(a)} - v) \ds \right| \\
\, &=: \, T_1 + T_2 + T_3\, .
\end{split}
\end{align}
We use H\"older's inequality with coefficients satisfying $1/2+1/4+1/4=1$, the embedding $H^{1/2}(\Gamma) \subset L^4(\Gamma)$ (see, e.g., \cite[Thm.\@ 6.7]{DiNPaVal12}), the fact that $\beta'$ has a global Lipschitz constant (since $\beta'\in C_b^2(\R)$) together with \eqref{eq:H1bound} and get that
\begin{align*}
T_1 \, &\lesssim \,  \Vert a \Vert_{L^\infty(\Gamma)} \Vert \beta'(u^{(a)}) - \beta'(u_h^{(a)}) \Vert_{L^2(\Gamma)} \Vert \dot{w}^{(a)} \Vert_{H^{1/2}(\Gamma)} \Vert \dot{e}^{(a)} \Vert_{H^{1/2}(\Gamma)}  \\ \, 
&\lesssim \, h \Vert a \Vert_{L^\infty(\Gamma)} \Vert u^{(a)} \Vert_{H^2(\Omega)} \Vert \dot{w}^{(a)} \Vert_{H^2(\Omega)} \Vert \dot{e}^{(a)} \Vert_{H^1(\Omega)} \, .
\end{align*}
For the second and third term we insert $v = \Pi_h \dot{w}^{(a)}$, where $\Pi_h$ is the interpolation operator satisfying \eqref{eq:interpest}. Using this estimate together with the Cauchy--Schwarz inequality shows that
\begin{align*}
T_2 \, \lesssim \, h \Vert \dot{e}^{(a)} \Vert_{H^1(\Omega)} \Vert\dot{w}^{(a)} \Vert_{H^2(\Omega)} \, .
\end{align*}
Finally, the third term can be bounded similarly as the terms before. Indeed
\begin{align*}
T_3 \, &\lesssim \, \left| \int_{\Gamma} a (\beta'(u^{(a)}) - \beta'(u_h^{(a)})) \dot{w}^{(a)} (\dot{w}^{(a)}-v)\ds \right| 
\\& \phantom{\lesssim } + \left| \int_{\Gamma} a (\beta'(u_h^{(a)}) - \beta'(u^{(a)})) \dot{e}^{(a)} (\dot{w}^{(a)}-v)\ds \right| + \left| \int_{\Gamma} a \beta'(u^{(a)}) \dot{e}^{(a)} (\dot{w}^{(a)}-v)\ds \right|  \\
& \lesssim \, h^2 \Vert a \Vert_{L^\infty(\Gamma)} \Vert {u}^{(a)} \Vert_{H^2(\Omega)} \Vert \dot{w}^{(a)} \Vert_{H^2(\Omega)}^2 \\
&\phantom{ \lesssim } + h\Vert a  \Vert_{L^\infty(\Gamma)} (h \Vert {u}^{(a)} \Vert_{H^2(\Omega)} + \Vert \beta'(u) \Vert_{L^\infty(\Gamma)}) \Vert \dot{e}^{(a)} \Vert_{H^1(\Omega)} \Vert \dot{w}^{(a)} \Vert_{H^2(\Omega)} \, .
\end{align*}
Collecting the estimates for $T_1, T_2$ and $T_3$ and returning to \eqref{eq:spliterror} yields
\begin{equation}\label{eq:depon}
\Vert \dot{e}^{(a)} \Vert_{H^1(\Omega)}^2 
\, \lesssim \, h \Vert \dot{w}^{(a)} \Vert_{H^2(\Omega)} 
\Vert \dot{e}^{(a)} \Vert_{H^1(\Omega)} + h^2 \Vert \dot{w}^{(a)} \Vert_{H^2(\Omega)}^2\, .
\end{equation}
The implicit constant in the inequality \eqref{eq:depon} depends on $\Vert a \Vert_{L^\infty(\Gamma)}$, $\Vert u^{(a)} \Vert_{H^2(\Omega)}$ and $\Vert \beta'(u^{(a)}) \Vert_{L^\infty(\Gamma)}$.
For any numbers $x,y,z\geq 0$, for which $x^2 \leq xy + z$ we find that $x^2 \leq y^2 + 2z$. Applying this to \eqref{eq:depon} finally yields \eqref{eq:linearizedH1}. 
\end{proof}

We apply Nitsche's trick to the linearized problem \eqref{eq:linRob} to obtain quadratic convergence in the mesh size $h$ when measuring the approximation error in the $L^2$-norm as shown in the next lemma.
\begin{lemma}\label{lem:L2}
Let $u^{(a)} \in H_\diamond^1(\Omega)$ be the unique solution to \eqref{eq:nonlin_robin_weak} and let $u_h^{(a)} \in V_{\diamond,h}$ solve \eqref{eq:uh}. Then, the approximation error in the $L^2$-norm can be bounded via
\begin{align}\label{eq:L2bound}
\Vert u^{(a)}-u_h^{(a)} \Vert_{L^2(\Omega)} \, \lesssim \, h^2 (\Vert u^{(a)} \Vert_{H^2(\Omega)} + \Vert u^{(a)}\Vert_{H^2(\Omega)}^2) \, .
\end{align}
\end{lemma}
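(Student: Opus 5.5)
We use a duality (Aubin--Nitsche) argument in which the dual problem is the linearized problem \eqref{eq:linRob} at the exact solution $u^{(a)}$. Abbreviate $e = u^{(a)}-u_h^{(a)}$ and let $\psi\in H_\diamond^1(\Omega)$ solve
\begin{align*}
\dot b[u^{(a)}](v,\psi) \, = \, \int_\Omega e\, v \dx \quad \text{for all } v\in H_\diamond^1(\Omega)\, ,
\end{align*}
which is \eqref{eq:linRob} with $\dot f = -e\in L^2(\Omega)$ and $\dot g=0$; note that $\dot b[u^{(a)}]$ is symmetric. By the regularity statement quoted before Lemma~\ref{lem:felinearized} we have $\Vert\psi\Vert_{H^2(\Omega)}\lesssim \Vert e\Vert_{L^2(\Omega)}$. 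The implicit constant depends on $a\beta'(u^{(a)})\in C^1(\Gamma)$, which Theorem~\ref{thm:howp} guarantees.

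Testing with $v=e$ gives $\Vert e\Vert_{L^2(\Omega)}^2 = \dot b[u^{(a)}](e,\psi)$. We compare this with the nonlinear Galerkin orthogonality: subtracting \eqref{eq:uh} from \eqref{eq:nonlin_robin_weak} yields $b(u^{(a)},v_h)-b(u_h^{(a)},v_h)=0$ for all $v_h\in V_{\diamond,h}$. Taylor expansion of $\beta$ around $u^{(a)}$ gives
\begin{align*}
\beta(u^{(a)})-\beta(u_h^{(a)}) \, = \, \beta'(u^{(a)})e - R\, , \qquad |R|\le \tfrac12\Vert\beta''\Vert_{L^\infty(\R)}\, e^2\, ,
\end{align*}
which uses $\beta'\in C_b^2(\R)$. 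Hence
\begin{align*}
\dot b[u^{(a)}](e,\psi) \, = \, b(u^{(a)},\psi)-b(u_h^{(a)},\psi) + \int_\Gamma a R\,\psi \ds\, .
\end{align*}
In the first difference we subtract the vanishing quantity $b(u^{(a)},\Pi_h\psi)-b(u_h^{(a)},\Pi_h\psi)$. What remains is
\begin{align*}
\int_\Omega \nabla e\cdot\nabla(\psi-\Pi_h\psi)\dx + \int_\Gamma a\big(\beta(u^{(a)})-\beta(u_h^{(a)})\big)(\psi-\Pi_h\psi)\ds\, .
\end{align*}
We bound this by the Lipschitz continuity of $\beta$, the trace theorem, and \eqref{eq:interpest}, obtaining $\lesssim \Vert e\Vert_{H^1(\Omega)}\, h\Vert\psi\Vert_{H^2(\Omega)}$. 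Lemma~\ref{lem:H1} then turns this into $\lesssim h^2\Vert u^{(a)}\Vert_{H^2(\Omega)}\Vert e\Vert_{L^2(\Omega)}$.

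The main obstacle is the quadratic remainder $\int_\Gamma aR\psi\ds$, which is where the term $\Vert u^{(a)}\Vert_{H^2(\Omega)}^2$ enters. We bound it by
\begin{align*}
\Vert a\Vert_{L^\infty(\Gamma)}\Vert\beta''\Vert_{L^\infty(\R)}\,\Vert e\Vert_{L^4(\Gamma)}^2\,\Vert\psi\Vert_{L^2(\Gamma)}\, .
\end{align*}
We then use $H^{1/2}(\Gamma)\subset L^4(\Gamma)$ and the trace theorem, so that $\Vert e\Vert_{L^4(\Gamma)}\lesssim \Vert e\Vert_{H^1(\Omega)}\lesssim h\Vert u^{(a)}\Vert_{H^2(\Omega)}$, while $\Vert\psi\Vert_{L^2(\Gamma)}\lesssim\Vert\psi\Vert_{H^2(\Omega)}\lesssim\Vert e\Vert_{L^2(\Omega)}$. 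The remainder is therefore bounded by $h^2\Vert u^{(a)}\Vert_{H^2(\Omega)}^2\Vert e\Vert_{L^2(\Omega)}$. The $L^4$ embedding on $\Gamma$ (valid for $d\le 3$) is exactly what makes the remainder come out as $h^2$ rather than something weaker.

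Collecting the two bounds and dividing by $\Vert e\Vert_{L^2(\Omega)}$ gives \eqref{eq:L2bound}. If working with the $H^2$-regularity constant of the dual problem is awkward, an alternative is to use the symmetric averaged coefficient $\int_0^1\beta'(u_h^{(a)}+s e)\ds$ in the dual problem. This avoids the remainder entirely, but it requires a coefficient in $C^1(\Gamma)$, which fails for the discrete $u_h^{(a)}$. For that reason we keep the Taylor-remainder version.
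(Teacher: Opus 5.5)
Your proof is correct. It is the same Aubin--Nitsche argument as in the paper: the dual problem is \eqref{eq:linRob} at the exact $u^{(a)}$ with $\dot{f}=-e$, $\dot{g}=0$, and the quadratic boundary remainder is controlled by $\Vert e\Vert_{L^4(\Gamma)}^2$ via $H^{1/2}(\Gamma)\subset L^4(\Gamma)$. The difference lies in which discrete function you insert into the Galerkin orthogonality.

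The paper splits the dual solution as $\dot{w}^{(a)}=\dot{e}^{(a)}+\dot{w}_h^{(a)}$. Here $\dot{w}_h^{(a)}$ is the Galerkin solution of the dual problem with the perturbed coefficient $\beta'(u_h^{(a)})$, and the paper tests the orthogonality with it. It therefore needs Lemma~\ref{lem:felinearized} to get $\Vert\dot{e}^{(a)}\Vert_{H^1(\Omega)}\lesssim h\Vert e\Vert_{L^2(\Omega)}$. It also needs an extra splitting $\Vert\dot{w}_h^{(a)}\Vert_{L^2(\Gamma)}\lesssim\Vert\dot{e}^{(a)}\Vert_{H^1(\Omega)}+\Vert\dot{w}^{(a)}\Vert_{H^1(\Omega)}$ when bounding the remainder.

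You instead subtract the orthogonality with $\Pi_h\psi$ and test the remainder against the exact $\psi$. So you only need \eqref{eq:interpest}, Lemma~\ref{lem:H1} and $H^2$-regularity of the dual problem. This makes your argument more elementary and self-contained, since it avoids the finite element analysis of the linearized problem with a discrete coefficient. For this lemma the paper's route gains nothing beyond reusing machinery (Lemma~\ref{lem:felinearized}) that it needs later anyway.

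Your Lagrange form of the Taylor remainder is equivalent to the paper's integral form $\int_0^1(\beta'(u^{(a)})-\beta'(u_h^{(a)}+te^{(a)}))\dt$; both only use boundedness of $\beta''$.
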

\begin{proof}
We abbreviate $e^{(a)} = u^{(a)}-u_h^{(a)}$ and, as before, $\dot{e}^{(a)} = \dot{w}^{(a)}-\dot{w}_h^{(a)}$.
In \eqref{eq:weakform} let $\dot{f} = -e^{(a)}$ and $\dot{g}=0$.
Then, by \eqref{eq:linearizedH1}, for the finite element approximation $\dot{w}_h^{(a)}$ satisfying \eqref{eq:uhdot} we find
\begin{align}\label{eq:H1bounddot}
\Vert \dot{e}^{(a)} \Vert_{H^1(\Omega)} \, = \,  \Vert \dot{w}^{(a)} - \dot{w}_h^{(a)} \Vert_{H^1(\Omega)} \, \lesssim \, h \Vert \dot{w}^{(a)} \Vert_{H^2(\Omega)} \, \lesssim \, h \Vert e^{(a)} \Vert_{L^2(\Omega)}\, .
\end{align}
We use \eqref{eq:udot} and insert $e^{(a)}$ as a test function.
\begin{align}\label{eq:erre}
\begin{split}
\Vert e^{(a)} \Vert_{L^2(\Omega)}^2 \, &= \, \int_\Omega \nabla e^{(a)}\cdot \nabla \dot{w}^{(a)} \dx + \int_{\Gamma} a \beta'(u^{(a)}) \dot{w}^{(a)} e^{(a)} \ds \\
&= \, \int_\Omega \nabla e^{(a)}\cdot \nabla \dot{e}^{(a)} \dx + \int_\Omega \nabla e^{(a)}\cdot \nabla \dot{w}_h^{(a)} \dx \\
&\phantom{= \,} + \int_{\Gamma} a \beta'(u^{(a)}) \dot{e}^{(a)} e^{(a)} \ds +  \int_{\Gamma} a \beta'(u^{(a)}) \dot{w}_h^{(a)} e^{(a)} \ds \, .
\end{split}
\end{align}
We note that the Galerkin identity for the problem \eqref{eq:nonlin_robin_weak} reads
\begin{align}\label{eq:galeq}
\int_\Omega \nabla e^{(a)} \cdot \nabla v \dx + \int_{\Gamma}a( \beta(u^{(a)}) - \beta(u_h^{(a)})  ) v \ds \, = \, 0\quad \text{for all } v \in V_{\diamond,h} \, .
\end{align}
Moreover, we find that
\begin{align*}
\beta(u^{(a)}) - \beta(u_h^{(a)}) \, = \, \int_0^1 \beta'(u_h^{(a)} + te^{(a)}) e^{(a)} \dt \, .
\end{align*}
Inserting $v= \dot{w}_h^{(a)}$ in \eqref{eq:galeq} and subtracting the resulting equation from \eqref{eq:erre} yields
\begin{align}\label{eq:erre2}
\begin{split}
\Vert e^{(a)} \Vert_{L^2(\Omega)}^2 
\, &= \, \int_\Omega \nabla e^{(a)}\cdot \nabla \dot{e}^{(a)} \dx  + \int_{\Gamma} a \beta'(u^{(a)}) \dot{e}^{(a)} e^{(a)} \ds\\
&\phantom{= \,}  +  \int_{\Gamma} a \Big( \int_0^1  \beta'(u^{(a)}) - \beta'(u_h^{(a)} + te^{(a)})  \dt  \Big)\dot{w}_h^{(a)} e^{(a)} \ds \, .
\end{split}
\end{align}
We estimate \eqref{eq:erre2} term by term. For the first term we use the Cauchy--Schwarz inequality together with \eqref{eq:H1bound} and \eqref{eq:H1bounddot} and find that
\begin{align}\label{eq:T1}
\begin{split}
\left|\int_\Omega \nabla e^{(a)}\cdot \nabla \dot{e}^{(a)} \dx\right| \, &\lesssim \,  \Vert e^{(a)} \Vert_{H^1(\Omega)} \Vert \dot{e}^{(a)} \Vert_{H^1(\Omega)} \\
  &\lesssim \,  h^2 \Vert u^{(a)}\Vert_{H^2(\Omega)} \Vert e^{(a)} \Vert_{L^2(\Omega)}\, .
  \end{split}
\end{align}
In the same way we can bound the second term and see that
\begin{align}\label{eq:T2}
\left| \int_{\Gamma} a \beta'(u^{(a)}) \dot{e}^{(a)} e^{(a)} \ds \right| \, \lesssim \, h^2 \Vert u^{(a)}\Vert_{H^2(\Omega)} \Vert e^{(a)} \Vert_{L^2(\Omega)}\, .
\end{align}
In order to bound the third term we use that $\beta'$ is continuously differentiable and note that
\begin{align*}
\left| \int_0^1  \beta'(u^{(a)}) - \beta'(u_h^{(a)} + te^{(a)})  \dt \right| \, \lesssim \, \int_0^1 |u^{(a)} - (u_h^{(a)} + te^{(a)})| \dt \, = \, \frac{1}{2} |e^{(a)}|\, .
\end{align*}
We denote the third term on the right hand side of \eqref{eq:erre2} by $T_3$ and see by applying H\"older's inequality with coefficients satisfying $1/4+1/4+1/2=1$ and the embedding $H^{1/2}(\Gamma) \subset L^4(\Gamma)$ (see, e.g., \cite[Thm.\@ 6.7]{DiNPaVal12}) together with \eqref{eq:H1bound} and \eqref{eq:H1bounddot} that
\begin{align}
\begin{split}\label{eq:T3}
\left|  T_3 \right| \, &\lesssim \, \Vert e^{(a)} \Vert_{L^4(\Gamma)}^2 \Vert \dot{w}_h^{(a)} \Vert_{L^2(\Gamma)} \\
\, &\lesssim \Vert e^{(a)} \Vert_{H^1(\Omega)}^2\big( \Vert \dot{e}^{(a)} \Vert_{H^1(\Omega)} + \Vert \dot{w}^{(a)} \Vert_{H^1(\Omega)} \big)\, \, \\
\, &\lesssim \,  h^2 \Vert u^{(a)} \Vert_{H^2(\Omega)}^{2} \left( h \Vert e^{(a)} \Vert_{L^2(\Omega)} + \Vert e^{(a)} \Vert_{L^2(\Omega)} \right) \, .
\end{split}
\end{align}
Thus, using \eqref{eq:T1}, \eqref{eq:T2}  and \eqref{eq:T3} with \eqref{eq:erre2} yields
\begin{align*}
\Vert e^{(a)} \Vert_{L^2(\Omega)}^2 \, \lesssim \, h^2 (\Vert u^{(a)}\Vert_{H^2(\Omega)} + \Vert u^{(a)}\Vert_{H^2(\Omega)}^2) \Vert e^{(a)} \Vert_{L^2(\Omega)}\, .
\end{align*}
This shows \eqref{eq:L2bound}.
\end{proof}
Before continuing with theoretical results we consider a numerical example. 
For this, let $\Omega_1 = B_{1/2}(0) \subset \R^2$, $\Omega_2=B_1(0)\subset \R^2$ and $\Omega = \Omega_2\setminus \overline{\Omega_1}$.
Moreover, let $\beta = \beta_\varepsilon: \R \to \R$ be defined by
\begin{align}\label{eq:betadef}
\beta_\varepsilon(r) \, = \, 
\begin{cases}
-1 \quad &\text{for } r\leq - \varepsilon \\
\gamma(r/\varepsilon) &\text{for } r \in  (-\varepsilon, \varepsilon)\\
1 \quad &\text{for } r\geq \varepsilon
\end{cases} \, \, \; \text{with }
\gamma(t) \, = \, 2C\int_{-1}^t \mathrm{e}^{-1/(1-\tau^2)} \dtau - 1
\end{align}
and $C = (\int_{-1}^1 \mathrm{e}^{-1/(1-\tau^2)} \dtau)^{-1}$.
This function is a $C^\infty$ version of the regularized Tresca friction function that is found in \cite[Ex.\@ 5, pp.\@ 38]{Bre72}. In our example we choose $\varepsilon = 10^{-1}$.
Finite element simulations are carried out by the open source computing platform FEniCS (see \cite{AlnaesEtal14, Dolfinx23, BasixJoss, ScroggsEtal2022}) on a polygonal domain $\Omega_h \approx \Omega$.
The friction threshold $a$ on $\Gamma$ is defined as
\begin{align*}
a(x,y) \, = \, 2 + \mathrm{e}^{\sin(x^2y)}\quad \text{for } (x,y) \in \Gamma\, .
\end{align*}
We choose the right hand sides $f$ and $g$ in \eqref{eq:nonlin_robin} in such a way that the exact solution to \eqref{eq:nonlin_robin} is 
\begin{align*}
u(x,y) \, = \, xy \sin(5\pi\sqrt{x^2 + y^2}) \quad \text{for } (x,y)\in \Omega\, .
\end{align*}
This function changes its sign four times on the boundary $\Gamma$, so the nonlinear boundary condition is certainly active in the transition zone $(-\varepsilon, \varepsilon)$. 
We simulate the finite element approximation $u_h$ for a range of mesh sizes $h$ and consider the $L^2$ and $H^1$ errors.
The result is found in Figure~\ref{fig:numex1}. 
\begin{figure}[t!]
\centering 
\begin{minipage}[t]{0.44\textwidth}
\raisebox{3.5mm}{\includegraphics[scale=.363]{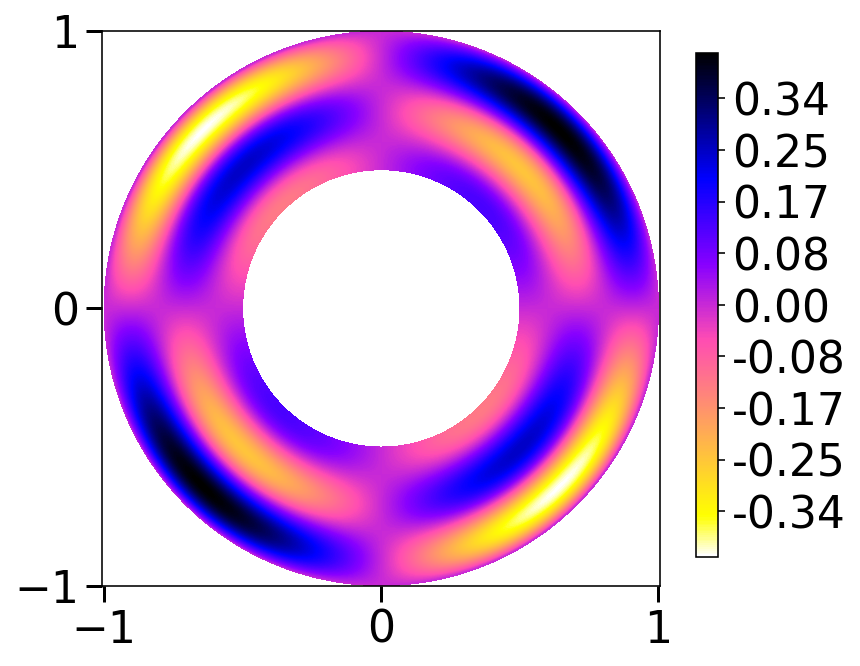}}
\end{minipage}
\begin{minipage}[t]{0.48\textwidth}
\hfill
\includegraphics[scale=.33]{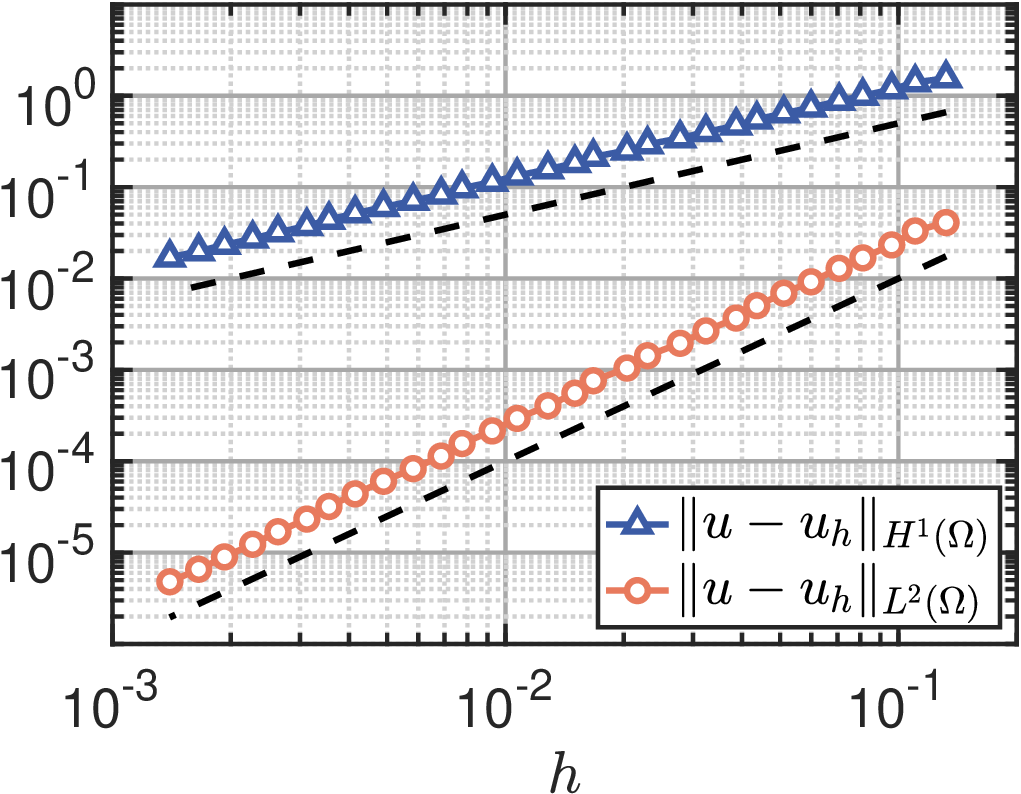}
\end{minipage}
\caption{
Left: The exact solution on the domain $\Omega_h$. 
Right: Convergence of the finite element solution as a function of the mesh size $h$. The $H^1$-error decreases linearly, the $L^2$-error decreases quadratically, in accordance with Lemma~\ref{lem:H1} and Lemma~\ref{lem:L2}. 
The dashed lines have slope one and two respectively.}
\label{fig:numex1}
\end{figure}
We see that the convergence orders are in accordance with Lemma~\ref{lem:H1} and Lemma~\ref{lem:L2}.
\section{Linearizations with respect to the friction threshold}
In this section we derive linearizations of the finite element approximations $u_h^{(a)}$ solving \eqref{eq:uh} and $z_h^{(a)}$ solving 
\begin{align}\label{eq:zh}
\dot{b}[u_h^{(a)}](z_h^{(a)},v) \, = \, \dot{\ell}_{1_\omega (u_h^{(a)} - q), 0}(v) \quad \text{for all } v \in V_{\diamond,h}\, 
\end{align}
with respect to $a$. 
Most of the upcoming estimates could be performed with the $L^\infty$ norm replacing the $C^2$ norm. Since we assume that $\tilde{a} \in C^2(\Gamma)$ we only write the latter one.
We use a similar framework as in \cite{BurKnoOks25}.
For a fixed $a\in C^2(\Gamma)$ let 
\begin{align}\label{def:Lambda}
U_a : D(U_a) \subset C^2(\Gamma) \to V_{\diamond,h}\, , \quad U_a(\eta) \, = \,  u_h^{(a+\eta)}\, ,
\end{align}
where $D(U_a)$ is a neighborhood of the zero function in $C^2(\Gamma)$ that is so small that $u_h^{(a+\eta)}$ is well-defined for any $\eta\in D(U_a)$.
Note that $U_a(0) = u_h^{(a)}$.
We are interested in characterizing the Fr\'echet derivative of the operator $U_a$ at zero, i.e., the operator $U_a'(0): C^2(\Gamma) \to H^1(\Omega)$ that satisfies
\begin{align*}
\frac{1}{\Vert \eta \Vert_{C^2(\Gamma)}} \Vert U_a(\eta) - U_a(0) - U_a'(0)\eta \Vert_{H^1(\Omega)} \, \to \, 0 \quad \text{as } \Vert \eta \Vert_{C^2(\Gamma)} \to 0\, . 
\end{align*}
We start with a result on the continuity of $U_a$.
\begin{lemma}\label{lem:continuity}
For $a \in C^2(\Gamma)$ with $0<a_0\leq a$ the operator $U_a$ from \eqref{def:Lambda} satisfies
\begin{align}\label{eq:conti}
\Vert U_a(\eta_1) - U_a(\eta_2)\Vert_{H^1(\Omega)}\, \lesssim \, \Vert \eta_1 - \eta_2 \Vert_{C^2(\Gamma)}
\end{align}
for any $\eta_1, \eta_2 \in D(U_a)$.
\end{lemma}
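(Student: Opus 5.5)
We prove \eqref{eq:conti} with the same strong monotonicity argument as in Theorem~\ref{thm:exuni} and Lemma~\ref{lem:H1}, now comparing two discrete problems with different friction thresholds. Write $u_1 = U_a(\eta_1) = u_h^{(a+\eta_1)}$ and $u_2 = U_a(\eta_2) = u_h^{(a+\eta_2)}$, and set $e = u_1 - u_2 \in V_{\diamond,h}$. We shrink $D(U_a)$ if necessary so that $\Vert \eta\Vert_{C^2(\Gamma)} \leq a_0/2$ for all $\eta\in D(U_a)$. Then $a+\eta_i \geq a_0/2>0$, both discrete solutions exist, and by the bound following \eqref{eq:uh} they satisfy $\Vert u_i\Vert_{H^1(\Omega)} \lesssim \Vert f\Vert_{L^2(\Omega)} + \Vert g\Vert_{H^{-1/2}(\Gamma)}$ uniformly in $\eta_i$.

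First, subtract the two discrete equations \eqref{eq:uh} for $a+\eta_1$ and $a+\eta_2$. Since the right-hand side $\ell_{f,g}$ does not depend on the threshold, for all $v\in V_{\diamond,h}$
\begin{align*}
\int_\Omega \nabla e\cdot\nabla v \dx + \int_\Gamma (a+\eta_1)\big(\beta(u_1)-\beta(u_2)\big) v \ds \, = \, -\int_\Gamma (\eta_1-\eta_2)\beta(u_2) v \ds\, .
\end{align*}
Here we added and subtracted $(a+\eta_1)\beta(u_2)$.

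Next, test this identity with $v=e$. The boundary term on the left is nonnegative because $\beta$ is non-decreasing and $a+\eta_1>0$. By \eqref{eq:poincare-type-inequ} the left-hand side is therefore bounded below by a multiple of $\Vert e\Vert_{H^1(\Omega)}^2$. We bound the right-hand side by
\begin{align*}
\Vert \eta_1-\eta_2\Vert_{L^\infty(\Gamma)}\Vert\beta(u_2)\Vert_{L^2(\Gamma)}\Vert e\Vert_{L^2(\Gamma)}\, .
\end{align*}
Since $\beta(0)=0$ and $\beta$ is globally Lipschitz, $\Vert\beta(u_2)\Vert_{L^2(\Gamma)}\lesssim \Vert u_2\Vert_{L^2(\Gamma)}\lesssim \Vert u_2\Vert_{H^1(\Omega)}$ by the trace theorem, and this is uniformly bounded by the first step. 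Applying the trace theorem also to $e$ gives $\Vert e\Vert_{H^1(\Omega)}^2 \lesssim \Vert\eta_1-\eta_2\Vert_{C^2(\Gamma)}\Vert e\Vert_{H^1(\Omega)}$. Dividing by $\Vert e\Vert_{H^1(\Omega)}$ yields \eqref{eq:conti}, with an implicit constant depending on $a_0$, $f$, $g$ and the Lipschitz constant of $\beta$, but not on $h$.

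The main point needing care is that the constant be uniform in $h$ and in $\eta_1,\eta_2$. This is why we use the a priori bound on the discrete solutions from the first step rather than any regularity of $u_i$. It is also why we restrict $D(U_a)$ to keep $a+\eta_i$ bounded away from zero: positivity of the threshold is what makes the monotone boundary term nonnegative. Everything else is routine.
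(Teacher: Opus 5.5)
Your proposal is correct and follows essentially the same route as the paper. Like the paper, you subtract the two discrete equations, test with the difference, drop the nonnegative monotone boundary term, and bound the remaining $(\eta_1-\eta_2)\beta(\cdot)$ term using Cauchy--Schwarz, the trace theorem and the uniform a priori $H^1$ bound. The differences are cosmetic: you split with $(a+\eta_1)$ and $\beta(u_2)$ rather than $(a+\eta_2)$ and $\beta(u_1)$, and you spell out the $\beta(0)=0$/Lipschitz step, which the paper leaves implicit.
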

\begin{proof}
We take the difference of \eqref{eq:uh} with $a$ replaced by $a+\eta_j$ for $j=1,2$ and insert $v = u_h^{(a+\eta_1)} - u_h^{(a+\eta_2)}$ to obtain
\begin{align}\label{eq:comp11}
\begin{split}
0  &\,=\,  b(u_h^{(a+\eta_1)},v) - b(u_h^{(a+\eta_2)},v) \\
&\,=\,  \Vert \nabla v \Vert_{L^2(\Omega)}^2 + \int_{\Gamma}( (a+\eta_1) \beta(u_h^{(a+\eta_1)}) - (a+\eta_2) \beta(u_h^{(a+\eta_2)} ))v \ds \\
&\,=\,  \Vert \nabla v \Vert_{L^2(\Omega)}^2 + \int_{\Gamma}(a+\eta_2)(\beta(u_h^{(a+\eta_1)}) - \beta(u_h^{(a+\eta_2)}))v\ds \\
&\phantom{= \, }  + \int_{\Gamma} (\eta_1 - \eta_2) \beta(u_h^{(a+\eta_1)} )v \ds \, .
\end{split}
\end{align}
We note that the second term on the right hand side of \eqref{eq:comp11} is positive due to the assumption that $\beta$ is non-decreasing.
The Poincar\'e type inequality \eqref{eq:poincare-type-inequ} yields
\begin{align*}
\Vert v \Vert_{H^1(\Omega)}^2 \, \lesssim \, \Vert \eta_1 - \eta_2 \Vert_{C^2(\Gamma)} \Vert u_h^{(a+\eta_1)}  \Vert_{H^1(\Omega)} \Vert v \Vert_{H^1(\Omega)} \, .
\end{align*}
Together with \eqref{eq:ubound} this implies \eqref{eq:conti}.
\end{proof}
We return to the linearized Robin-type problem from \eqref{eq:uhdot} as $U_a'(0)\eta$ can be expressed in terms of its solution.
\begin{theorem}\label{thm:deruh}
Let $\dot{u}_h^{(a)} \in V_{\diamond,h}$ denote the unique solution of \eqref{eq:uhdot} with $\dot{f} = 0$ and $\dot{g} = -\eta \beta(u_h^{(a)})$,
where $u_h^{(a)}$ is defined by
\eqref{eq:uh}.
Then,
$\dot{u}_h^{(a)}= U_a'(0)\eta$.
\end{theorem}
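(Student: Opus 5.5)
We show directly that the remainder $r := U_a(\eta) - U_a(0) - \dot{u}_h^{(a)} = u_h^{(a+\eta)} - u_h^{(a)} - \dot{u}_h^{(a)} \in V_{\diamond,h}$ satisfies $\Vert r\Vert_{H^1(\Omega)} = o(\Vert \eta\Vert_{C^2(\Gamma)})$. Since $\eta \mapsto \dot{u}_h^{(a)}$ is linear (the datum $\dot g=-\eta\beta(u_h^{(a)})$ is linear in $\eta$) and bounded (by the coercivity of $\dot b[u_h^{(a)}]$, which follows from \eqref{eq:poincare-type-inequ} and $\beta'\ge 0$, together with $\Vert \beta(u_h^{(a)})\Vert_{L^2(\Gamma)}\lesssim \Vert u_h^{(a)}\Vert_{H^1(\Omega)}$), this identifies $\dot{u}_h^{(a)}$ as the Fréchet derivative $U_a'(0)\eta$.

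To derive an equation for $r$, subtract \eqref{eq:uh} for $a$ from \eqref{eq:uh} for $a+\eta$. Writing $\delta := u_h^{(a+\eta)} - u_h^{(a)}$, we get for all $v\in V_{\diamond,h}$
\begin{align*}
\int_\Omega \nabla \delta\cdot\nabla v \dx + \int_\Gamma a\big(\beta(u_h^{(a+\eta)})-\beta(u_h^{(a)})\big)v \ds + \int_\Gamma \eta\,\beta(u_h^{(a+\eta)}) v\ds \, = \, 0\, .
\end{align*}
Subtracting \eqref{eq:uhdot} with $\dot f=0$ and $\dot g = -\eta\beta(u_h^{(a)})$ gives
\begin{align*}
\dot b[u_h^{(a)}](r,v) \, = \, -\int_\Gamma a\big(\beta(u_h^{(a+\eta)})-\beta(u_h^{(a)})-\beta'(u_h^{(a)})\delta\big)v\ds - \int_\Gamma \eta\big(\beta(u_h^{(a+\eta)})-\beta(u_h^{(a)})\big)v\ds\, .
\end{align*}
Testing with $v=r$ and using coercivity yields $\Vert r\Vert_{H^1(\Omega)}^2 \lesssim (R_1+R_2)\Vert r\Vert_{H^1(\Omega)}$, where $R_1,R_2$ denote the dual norms of the two right-hand-side functionals.

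For $R_2$, the global Lipschitz continuity of $\beta$ and the trace theorem give $R_2 \lesssim \Vert\eta\Vert_{L^\infty(\Gamma)}\Vert\delta\Vert_{H^1(\Omega)}$. Lemma~\ref{lem:continuity} then bounds this by $\Vert\eta\Vert_{C^2(\Gamma)}^2$.

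For $R_1$, apply Taylor's formula with integral remainder:
\begin{align*}
\beta(u_h^{(a)}+\delta)-\beta(u_h^{(a)})-\beta'(u_h^{(a)})\delta \, = \, \int_0^1 \big(\beta'(u_h^{(a)}+t\delta)-\beta'(u_h^{(a)})\big)\dt\,\delta\, .
\end{align*}
Since $\beta'$ is globally Lipschitz, this quantity is pointwise bounded by $|\delta|^2$. Using Hölder's inequality with exponents $1/4+1/4+1/2=1$ and the embedding $H^{1/2}(\Gamma)\subset L^4(\Gamma)$, exactly as in the proof of Lemma~\ref{lem:L2}, gives $R_1 \lesssim \Vert a\Vert_{L^\infty(\Gamma)}\Vert \delta\Vert_{H^1(\Omega)}^2 \lesssim \Vert \eta\Vert_{C^2(\Gamma)}^2$, again by Lemma~\ref{lem:continuity}.

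Combining the two bounds gives $\Vert r\Vert_{H^1(\Omega)}\lesssim\Vert\eta\Vert_{C^2(\Gamma)}^2$, which is more than the required $o(\Vert\eta\Vert_{C^2(\Gamma)})$. The main point needing care is the quadratic bound on $R_1$: it relies on the $L^4(\Gamma)$ embedding in $d\le 3$ and on the Lipschitz estimate of Lemma~\ref{lem:continuity}, both of which are available. Everything else is routine, with constants uniform for $\eta$ in $D(U_a)$ thanks to the lower bound $a_0$.
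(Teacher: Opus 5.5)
Your proof is correct and follows essentially the same route as the paper. You derive the same equation for the remainder $r = u_h^{(a+\eta)}-u_h^{(a)}-\dot u_h^{(a)}$ and test it with $r$. You split the right-hand side the same way: a Taylor-remainder term, bounded quadratically via $H^{1/2}(\Gamma)\subset L^4(\Gamma)$ and Lemma~\ref{lem:continuity}, and the term $\eta(\beta(u_h^{(a+\eta)})-\beta(u_h^{(a)}))$. This gives $\Vert r\Vert_{H^1(\Omega)}\lesssim\Vert\eta\Vert_{C^2(\Gamma)}^2$. The remaining differences are cosmetic: you absorb $a\beta'(u_h^{(a)})r$ into the coercive form $\dot b[u_h^{(a)}]$ instead of keeping it as a separate nonnegative term, and you explicitly check that $\eta\mapsto\dot u_h^{(a)}$ is bounded and linear, which the paper leaves implicit.
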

\begin{proof}
We take the difference of \eqref{eq:uh} with $a$ replaced by $a+\eta_j$ for $j=1,2$ and additionally subtract \eqref{eq:uhdot} with $\dot{f}$ and $\dot{g}$ as given in the theorem. Then, we insert $v=u_h^{(a+\eta)} - u_h^{(a)} - \dot{u}_h^{(a)}$ as a test function to obtain
\begin{multline}\label{eq:frech}
\Vert \nabla v \Vert_{L^2(\Omega)}^2 + \int_{\Gamma} a ( \beta(u_h^{(a+\eta)}) - \beta(u_h^{(a)}) - \beta'(u_h^{(a)}) \dot{u}_h^{(a)})v \ds \\
\, = \, - \int_{\Gamma} \eta ( \beta(u_h^{(a+\eta)}) - \beta(u_h^{(a)}) ) v \ds \, .
\end{multline}
Using the Cauchy--Schwarz inequality, the differentiability of $\beta$ and the continuity result from Lemma \ref{lem:continuity} yields
\begin{equation}\label{eq:boundrhs}
\left| \int_{\Gamma} \eta ( \beta(u_h^{(a+\eta)}) - \beta(u_h^{(a)}) ) v \ds\right| \, \lesssim \, \Vert \eta \Vert_{C^2(\Gamma)}^2\Vert v \Vert_{H^1(\Omega)}\, .
\end{equation}
Furthermore, we write
\begin{multline}\label{eq:exp1}
a(\beta(u_h^{(a+\eta)}) - \beta(u_h^{(a)}) - \beta'(u_h^{(a)}) \dot{u}_h^{(a)})v \\
\, = \, a(\beta(u_h^{(a+\eta)}) - \beta(u_h^{(a)}) - \beta'(u_h^{(a)}) (u_h^{(a+\eta)} - u_h^{(a)} ))v + a\beta'(u_h^{(a)}) v^2
\end{multline}
and see that the second term on the right hand side of \eqref{eq:exp1} is positive.
Moreover, for the first term on the right hand side of \eqref{eq:exp1} we use that
\begin{align*}
|\beta(u) - \beta(v) - \beta'(v)(u-v)| \, \lesssim \, |u-v|^2 \quad \text{for any } u,v \in \R\, 
\end{align*}
and conclude by applying the Cauchy--Schwarz inequality, the Sobolev embedding $H^{1/2}(\Gamma) \subset L^4(\Gamma)$ and the Lipschitz continuity from \eqref{eq:conti} that
\begin{multline}\label{eq:dbbeta}
\left| \int_{\Gamma} a(\beta(u_h^{(a+\eta)}) - \beta(u_h^{(a)}) - \beta'(u_h^{(a)}) (u_h^{(a+\eta)} - u_h^{(a)} ))v \ds \right| \\
\, \lesssim \, \Vert a \Vert_{C^2(\Gamma)}\Vert \eta \Vert_{C^2(\Gamma)}^2 \Vert v \Vert_{H^1(\Omega)}\, ,
\end{multline}
Using \eqref{eq:boundrhs} and \eqref{eq:dbbeta} in \eqref{eq:frech} yields that
\begin{align*}
\Vert v \Vert_{H^1(\Omega)}^2 \, \lesssim \, \Vert \eta \Vert_{C^2(\Gamma)}^2 \Vert v \Vert_{H^1(\Omega)}\, ,
\end{align*}
which proves the claim.
\end{proof}

We apply the stability bound from Theorem~\ref{thm:stability} to $\dot{u}_h^{(a)}$ from Theorem \ref{thm:deruh}. This yields the next corollary.
\begin{corollary}\label{cor:stability}
Let $\eta \in V_J$. 
Then, the function $\dot{u}_h^{(a)} \in V_{\diamond,h}$ defined by \eqref{eq:uhdot} with $\dot{f} = 0$ and $\dot{g} = -\eta \beta(u_h^{(a)})$ satisfies
\begin{align}\label{eq:uhpbound}
\Vert \dot{u}_h^{(a)} \Vert_{H^1(\Omega)}\, \lesssim \, \Vert \dot{u}_h^{(a)} \Vert_{L^2(\omega)} + h \Vert \eta \Vert_{C^2(\Gamma)} \, .
\end{align}
The implicit constant in \eqref{eq:uhpbound} depends on $a$, on $\beta'$ and on the finite dimensional subspace $V_J$.
\end{corollary}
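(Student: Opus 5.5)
We would apply Theorem~\ref{thm:stability} with $v=\dot{u}_h^{(a)}$, with the exact solution $u=u^{(a)}$ (not $u_h^{(a)}$) in the Robin coefficient, and with the finite dimensional space $\widetilde{V}_J = \{\eta\beta(u^{(a)}) : \eta\in V_J\}$. Since $\beta(u^{(a)})|_\Gamma \in C^1(\Gamma)$ by Theorem~\ref{thm:howp}, this space lies in $H^{1/2}(\Gamma)$. The difficulty is that $\dot{u}_h^{(a)}$ is only a discrete function: $\Delta\dot{u}_h^{(a)}$ is not in $L^2(\Omega)$ and its Robin trace is not defined pointwise. We therefore do not insert $\dot{u}_h^{(a)}$ directly. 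Instead we introduce the continuous companion $\dot{u}^{(a)}\in H_\diamond^1(\Omega)$, defined by \eqref{eq:udot} with $\dot f=0$ and $\dot g=-\eta\beta(u^{(a)})$.

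For $\dot{u}^{(a)}$ we have $\Delta\dot{u}^{(a)}=0$, and $\partial_\nu\dot{u}^{(a)}+a\beta'(u^{(a)})\dot{u}^{(a)} = -\eta\beta(u^{(a)})\in\widetilde{V}_J$, so $Q(\cdot)=0$. Theorem~\ref{thm:stability} then gives
\[
\Vert \dot{u}^{(a)}\Vert_{H^1(\Omega)}\lesssim \Vert \dot{u}^{(a)}\Vert_{L^2(\omega)} .
\]
The triangle inequality then yields
\[
\Vert \dot{u}_h^{(a)}\Vert_{H^1(\Omega)}\lesssim \Vert \dot{u}_h^{(a)}\Vert_{L^2(\omega)} + \Vert \dot{u}^{(a)}-\dot{u}_h^{(a)}\Vert_{H^1(\Omega)},
\]
so everything reduces to showing $\Vert \dot{u}^{(a)}-\dot{u}_h^{(a)}\Vert_{H^1(\Omega)}\lesssim h\Vert\eta\Vert_{C^2(\Gamma)}$.

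For this we split the error with an intermediate function $\dot{w}_h$, the discrete solution of \eqref{eq:uhdot} with the \emph{exact} data $\dot g=-\eta\beta(u^{(a)})$. Lemma~\ref{lem:felinearized} gives
\[
\Vert\dot{u}^{(a)}-\dot{w}_h\Vert_{H^1(\Omega)}\lesssim h\Vert\dot{u}^{(a)}\Vert_{H^2(\Omega)}\lesssim h\Vert\eta\beta(u^{(a)})\Vert_{H^{1/2}(\Gamma)}\lesssim h\Vert\eta\Vert_{C^2(\Gamma)}.
\]
Here we use the $H^2$ regularity bound from the proof of that lemma, and $\beta(u^{(a)})\in H^1(\Gamma)$ because $\beta$ is Lipschitz and $u^{(a)}|_\Gamma\in H^{2+\varepsilon}(\Gamma)$.

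The remaining difference $\dot{w}_h-\dot{u}_h^{(a)}\in V_{\diamond,h}$ solves the discrete problem with bilinear form $\dot b[u_h^{(a)}]$ and data $-\eta(\beta(u^{(a)})-\beta(u_h^{(a)}))$. Coercivity of $\dot b[u_h^{(a)}]$ follows from \eqref{eq:poincare-type-inequ} and $\beta'\ge0$. Combining this with the trace inequality, the Lipschitz continuity of $\beta$, and Lemma~\ref{lem:H1} gives
\[
\Vert\dot{w}_h-\dot{u}_h^{(a)}\Vert_{H^1(\Omega)}\lesssim \Vert\eta\Vert_{L^\infty(\Gamma)}\Vert u^{(a)}-u_h^{(a)}\Vert_{L^2(\Gamma)}\lesssim h\Vert\eta\Vert_{C^2(\Gamma)}\Vert u^{(a)}\Vert_{H^2(\Omega)}.
\]
Summing the two pieces gives the bound, with $\Vert u^{(a)}\Vert_{H^2(\Omega)}$ absorbed into the constant depending on $a$.

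The main obstacle is the choice of $\widetilde{V}_J$. It has to be fixed independently of $h$, which is why we use $\beta(u^{(a)})$ rather than $\beta(u_h^{(a)})$. The implicit constant of Theorem~\ref{thm:stability} then depends on $V_J$, on $a$ and on $u^{(a)}$, as the corollary states. We also need the injectivity of $A$ in Theorem~\ref{thm:stability} to hold on this $\widetilde{V}_J$. This follows from unique continuation: if $w|_\omega=0$, then the Cauchy data of $w$ on $\Gamma$ vanish, so $g=0$. Beyond this, only the routine regularity checks above are needed.
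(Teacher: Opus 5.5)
Your proposal is correct and follows essentially the same route as the paper: you introduce the continuous companion $\dot{u}^{(a)}$ with data $-\eta\beta(u^{(a)})$, apply Theorem~\ref{thm:stability} with $\widetilde{V}_J=\mathrm{span}\{\phi_j\beta(u^{(a)})\}$, and split $\dot{u}^{(a)}-\dot{u}_h^{(a)}$ through the discrete solution with exact data, bounding the two pieces by Lemma~\ref{lem:felinearized} and by coercivity together with Lemma~\ref{lem:H1}, respectively. The differences are only cosmetic: you bound $\beta(u^{(a)})$ in $H^1(\Gamma)$ via the chain rule instead of citing the composition estimate, and your sign for the boundary data of $\dot{w}_h-\dot{u}_h^{(a)}$, namely $-\eta(\beta(u^{(a)})-\beta(u_h^{(a)}))$, is the correct one, whereas the paper's version drops the minus sign, which does not affect the norm bound.
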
 
\begin{proof}
We consider the linearized equation \eqref{eq:udot} and use $\tilde{f} = 0$ and $\tilde{g} = -\eta \beta(u^{(a)})$ in place of $\dot{f}$ and $\dot{g}$. 
Consider the finite dimensional space $\tilde{V}_J \subset H^{1/2}(\Gamma)$ spanned by $\phi_j \beta(u^{(a)})$ for $j=1,\dots,J$. By the stability result from Theorem \ref{thm:stability} we find that the corresponding solution to the linearized problem $\dot{u}^{(a)}\in H_\diamond^1(\Omega)$ satisfies
\begin{align}\label{eq:stabutilde}
\Vert \dot{u} ^{(a)}\Vert_{H^1(\Omega)} \, \lesssim \, \Vert \dot{u}^{(a)} \Vert_{L^2(\omega)} \, . 
\end{align}
The implicit constant in \eqref{eq:stabutilde} does not depend on $h$, since the weak formulation in \eqref{eq:udot} does not involve any $h$-dependence.
Let $\tilde{u}_h^{(a)} \in V_{\diamond,h}$ be the solution of \eqref{eq:uhdot} with $\tilde{f}$ and $\tilde{g}$.
The difference between $\tilde{u}_h^{(a)}$ and $\dot{u}_h^{(a)}$ is that the functions $\dot{g}$ and $\tilde{g}$ in the respective right hand side differ.
The finite element estimate from Lemma \ref{lem:felinearized} shows that
\begin{align}\label{eq:linfebound}
\Vert \dot{u}^{(a)} - \tilde{u}_h^{(a)} \Vert_{H^1(\Omega)} \, \lesssim \, h \Vert \dot{u}^{(a)} \Vert_{H^2(\Omega)}
\, &\lesssim \, h \Vert \eta \Vert_{C^2(\Gamma)} \Vert \beta(u^{(a)}) \Vert_{H^{1/2}(\Gamma)} \, .
\end{align}
Using \cite[Thm.\@ 6]{BoSi11} and a localization argument to $\Gamma$ as in the proof of Theorem \ref{thm:howp} further shows that
\begin{align*}
\Vert \beta(u^{(a)}) \Vert_{H^{1/2}(\Gamma)} \, \lesssim \, \Vert \beta' \Vert_{L^\infty(u(\Gamma))} \Vert u^{(a)} \Vert_{H^1(\Omega)}\, .
\end{align*}
We hide this bound in our implicit constant. 
The function $\tilde{u}_h^{(a)} - \dot{u}_h^{(a)}$ is a solution to \eqref{eq:uhdot} with $\dot{f}$ replaced by $0$ and $\dot{g}$ replaced by $\eta(\beta(u^{(a)}) - \beta(u_h^{(a)}))$.
Using the well-posedness bound, the differentiability of $\beta$ and Lemma~\ref{lem:H1} yields that
\begin{align}\label{eq:linfebound2}
\Vert \tilde{u}_h^{(a)} - \dot{u}_h^{(a)} \Vert_{H^1(\Omega)} \, \lesssim \, \Vert \eta(\beta(u^{(a)}) - \beta(u_h^{(a)})) \Vert_{L^2(\Gamma)} \, \lesssim \, h \Vert \eta \Vert_{C^2(\Gamma)} \, .
\end{align}
Combining \eqref{eq:linfebound} and \eqref{eq:linfebound2} shows that
\begin{align}\label{eq:error_to_full_uh}
\Vert \dot{u}^{(a)} - \dot{u}_h^{(a)} \Vert_{H^1(\Omega)} \, \lesssim \, h \Vert \eta \Vert_{C^2(\Gamma)} \, .
\end{align}
Finally we use the triangle inequality, \eqref{eq:stabutilde} and \eqref{eq:error_to_full_uh} to get that
\begin{align*}
\Vert \dot{u}_h^{(a)} \Vert_{H^1(\Omega)} \, &\lesssim \, \Vert \dot{u}^{(a)} \Vert_{H^1(\Omega)} + h \Vert \eta \Vert_{C^2(\Gamma)} \\
&\lesssim \,  \Vert \dot{u}^{(a)} \Vert_{L^2(\omega)} + h \Vert \eta \Vert_{C^2(\Gamma)} \\
&\lesssim \,  \Vert \dot{u}_h^{(a)} \Vert_{L^2(\omega)} + h \Vert \eta \Vert_{C^2(\Gamma)} \, ,
\end{align*}
which ends the proof.
\end{proof}
Next we study the problem \eqref{eq:c2}. 
Recall that $q = u^{(\tilde{a})}|_\omega$ and that $z^{(a)} \in H_\diamond^{1}(\Omega)$ denotes the solution to \eqref{eq:c2} with $u=u^{(a)}$.
In particular, if $a = \tilde{a}$, then $z^{(\tilde{a})}=0$.
The corresponding finite element approximation $z_h^{(\tilde{a})} \in V_{\diamond,h}$ is the solution to \eqref{eq:zh} with $a$ replaced by $\tilde{a}$.
The next lemma shows that $z_h^{(\tilde{a})}$ has an unexpectedly good $H^1$ convergence to zero.
\begin{lemma}\label{lem:nicezhconv}
Let $z_h^{(\tilde{a})} \in V_{\diamond,h}$ denote the solution to \eqref{eq:zh} with $a$ replaced by $\tilde{a}$. Then,
\begin{align*}
\Vert z_h^{(\tilde{a})} \Vert_{H^1(\Omega)} \, \lesssim \, h^2 \, .
\end{align*}
\begin{proof}
We combine the well-posedness result for linear elliptic partial differential equations with the $L^2$-bound for the nonlinear problem in Theorem \ref{lem:L2} and obtain that
\begin{align*}
\Vert z_h^{(\tilde{a})} \Vert_{H^1(\Omega)} \, \lesssim \, \Vert u_h^{(\tilde{a})} - q \Vert_{L^2(\omega)} \, \lesssim \, h^2 (\Vert u^{(\tilde{a})} \Vert_{H^2(\Omega)} + \Vert u^{(a)}\Vert_{H^2(\Omega)}^2) \, .
\end{align*}
\end{proof}
\end{lemma}
For proving Lipschitz continuity of finite element approximations satisfying \eqref{eq:zh} with respect to the parameter $a$, we consider a slightly more general result in the next lemma. 
We consider a Lipschitz continuity result for finite element approximations satisfying \eqref{eq:uhdot}.
This provides Lipschitz continuity not only for $z_h^{(a)}$, but also for $\dot{u}_h^{(a)}$ and for $\dot{z}_h^{(a)}$; a function that we introduce in Theorem~\ref{thm:derzh}.
\begin{lemma}\label{lem:gencont}
Let $\dot{f}^{(a)} \in (H^1(\Omega))'$ and $\dot{g}^{(a)} \in H^{-1/2}(\Gamma)$ be two families of distributions parametrized by $a \in C^2(\Gamma)$ and suppose the following continuity
\begin{subequations}\label{eq:fstargstarass}
\begin{align}
\Vert \dot{f}^{(a_1)} - \dot{f}^{(a_2)} \Vert_{(H^1(\Omega))'}   \, &\lesssim \, \Vert a_1 - a_2 \Vert_{C^2(\Gamma)} \, ,\\
 \Vert \dot{g}^{(a_1)} - \dot{g}^{(a_2)} \Vert_{H^{-1/2}(\Gamma) } \, &\lesssim \, \Vert a_1 - a_2 \Vert_{C^2(\Gamma)}\, . \label{eq:fstargstarass2}
\end{align}
\end{subequations}
Then, for $j=1,2$, the solutions $\dot{w}_h^{(a+\eta_j)} \in V_{\diamond, h}$ satisfying \eqref{eq:uhdot} with $a$, $\dot{f}$ and $\dot{g}$ replaced by $a+\eta_j$, $\dot{f}^{(a+\eta_j)}$ and $\dot{g}^{(a+\eta_j)}$ with $a\geq a_0>0$ and $\Vert \eta_j \Vert_{C^2(\Gamma)}$ sufficiently small satisfy
\begin{align*}
\Vert \dot{w}_h^{(a+\eta_1)} - \dot{w}_h^{(a+\eta_2)} \Vert_{H^1(\Omega)} \, \lesssim \, \Vert \eta_1 - \eta_2 \Vert_{C^2(\Gamma)}\, .
\end{align*}
\end{lemma}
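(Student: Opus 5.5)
We follow the pattern of Lemma~\ref{lem:continuity}: subtract the two discrete equations and test with the difference. Abbreviate $w_j = \dot{w}_h^{(a+\eta_j)}$, $u_j = u_h^{(a+\eta_j)}$ and $v = w_1 - w_2 \in V_{\diamond,h}$. Subtracting \eqref{eq:uhdot} for $j=2$ from that for $j=1$ and testing with $v$ gives
\begin{align*}
\Vert \nabla v \Vert_{L^2(\Omega)}^2 + \int_\Gamma (a+\eta_1)\beta'(u_1) v^2 \ds
\, = \, &- \int_\Gamma \big((a+\eta_1)\beta'(u_1) - (a+\eta_2)\beta'(u_2)\big) w_2 v \ds \\
&+ \dot{\ell}_{\dot{f}^{(a+\eta_1)} - \dot{f}^{(a+\eta_2)},\, \dot{g}^{(a+\eta_1)} - \dot{g}^{(a+\eta_2)}}(v)\, .
\end{align*}
The boundary term on the left is nonnegative because $\beta' \geq 0$ and $a+\eta_1 \geq a_0/2 > 0$ once $\Vert \eta_1\Vert_{C^2(\Gamma)}$ is small. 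By \eqref{eq:poincare-type-inequ}, the left hand side therefore dominates $\Vert v\Vert_{H^1(\Omega)}^2$.

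The linear-form term on the right is bounded by $\Vert v\Vert_{H^1(\Omega)}\Vert \eta_1-\eta_2\Vert_{C^2(\Gamma)}$. This uses the duality pairings, the trace theorem $H^1(\Omega)\to H^{1/2}(\Gamma)$, and the assumptions \eqref{eq:fstargstarass}.

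For the coefficient term we split
\begin{align*}
(a+\eta_1)\beta'(u_1) - (a+\eta_2)\beta'(u_2) \, = \, (\eta_1-\eta_2)\beta'(u_1) + (a+\eta_2)\big(\beta'(u_1)-\beta'(u_2)\big)\, .
\end{align*}
The first part is bounded by $\Vert\beta'\Vert_{L^\infty}\Vert\eta_1-\eta_2\Vert_{C^2(\Gamma)}\Vert w_2\Vert_{L^2(\Gamma)}\Vert v\Vert_{L^2(\Gamma)}$. For the second part we use the global Lipschitz continuity of $\beta'$ and H\"older's inequality with exponents $1/2+1/4+1/4=1$, together with $H^{1/2}(\Gamma)\subset L^4(\Gamma)$, exactly as in the proof of Lemma~\ref{lem:felinearized}. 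This gives the bound
\begin{align*}
\Vert a+\eta_2\Vert_{L^\infty(\Gamma)} \Vert u_1-u_2\Vert_{L^2(\Gamma)}\Vert w_2\Vert_{H^1(\Omega)}\Vert v\Vert_{H^1(\Omega)}\, ,
\end{align*}
and Lemma~\ref{lem:continuity} yields $\Vert u_1-u_2\Vert_{H^1(\Omega)}\lesssim\Vert\eta_1-\eta_2\Vert_{C^2(\Gamma)}$.

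It remains to show that $\Vert w_2\Vert_{H^1(\Omega)}$ is bounded uniformly in $h$ and in small $\eta_2$. This is the step needing care, though it is easy. Testing \eqref{eq:uhdot} for $w_2$ with $w_2$ itself and using coercivity (again $\beta'\geq0$) gives
\begin{align*}
\Vert w_2\Vert_{H^1(\Omega)} \, \lesssim \, \Vert \dot{f}^{(a+\eta_2)}\Vert_{(H^1)'} + \Vert\dot{g}^{(a+\eta_2)}\Vert_{H^{-1/2}(\Gamma)}\, .
\end{align*}
By \eqref{eq:fstargstarass} this is at most the corresponding norms at $a$ plus $C\Vert\eta_2\Vert_{C^2(\Gamma)}$, hence uniformly bounded. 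Collecting the estimates gives $\Vert v\Vert_{H^1(\Omega)}^2\lesssim \Vert\eta_1-\eta_2\Vert_{C^2(\Gamma)}\Vert v\Vert_{H^1(\Omega)}$, and dividing by $\Vert v\Vert_{H^1(\Omega)}$ proves the claim. The implicit constant depends on $a$, $a_0$, $\beta'$, and the data at $a$.
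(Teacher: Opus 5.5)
Your proof is correct and is essentially the paper's argument. The paper writes $w^*=\dot{w}_h^{(a+\eta_1)}-\dot{w}_h^{(a+\eta_2)}$ as the solution of the discrete problem with an extra boundary datum $\xi=\xi_1+\xi_2$, which is the same splitting as yours with the roles of $\eta_1,\eta_2$ swapped. It bounds $\xi$ in $L^2(\Gamma)$ via H\"older, $H^{1/2}(\Gamma)\subset L^4(\Gamma)$ and Lemma~\ref{lem:continuity}, then applies the well-posedness bound, which is what your testing with $v$ does. Your uniform bound on $\Vert w_2\Vert_{H^1(\Omega)}$ via \eqref{eq:fstargstarass} spells out a step the paper only remarks on.
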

\begin{proof}
The function $w^* = \dot{w}_h^{(a+\eta_1)} - \dot{w}_h^{(a+\eta_2)}$ is the unique solution to
\begin{align*}
\dot{b}[u_h^{(a+\eta_2)}](w^*,v) \, = \, \dot{\ell}_{\dot{f}^*, \dot{g}^* + \xi}(v) \quad \text{for all } v \in V_{\diamond, h} \, ,
\end{align*}
where
\begin{align*}
\dot{f}^* \, &= \, \dot{f}^{(a+ \eta_1)} - \dot{f}^{(a+\eta_2)} \, , \qquad
\dot{g}^* \, = \, \dot{g}^{(a+ \eta_1)} - \dot{g}^{(a+\eta_2)} \, ,\\
\xi \, &= \, \left( (a+\eta_2) \beta'(u_h^{(a+\eta_2)}) - (a+\eta_1) \beta'(u_h^{(a+\eta_1)}) \right) \dot{w}_h^{(a+\eta_1)} \, .
\end{align*}
We split $\xi$ into $\xi = \xi_1 + \xi_2$ with
\begin{subequations}\label{def:xi1xi2}
\begin{align}
\xi_1 \, &= \, (a+\eta_2) \left( \beta'(u_h^{(a+\eta_2)}) - \beta'(u_h^{(a+\eta_1)})\right) \dot{w}_h^{(a+\eta_1)} \, , \\
\xi_2 \, &= \, (\eta_2 - \eta_1) \beta'(u_h^{(a+\eta_1)})\dot{w}_h^{(a+\eta_1)}\, .
\end{align}
\end{subequations}
The second term $\xi_2$ can be easily bounded since $\beta' \in C_b^2(\R)$ to get
\begin{align*}
\Vert \xi_2 \Vert_{L^2(\Gamma)} \, \lesssim \, \Vert \eta_1 - \eta_2 \Vert_{L^\infty(\Gamma)}\Vert \dot{w}_h^{(a+\eta_1)} \Vert_{H^1(\Omega)} \, .
\end{align*}
For the first term we use H\"older's inequality with coefficients that satisfy $1/4 + 1/4 = 1/2$, the Lipschitz continuity of $\beta'$, the Sobolev embedding $H^{1/2}(\Gamma) \subset L^4(\Gamma)$ (see, e.g., \cite[Thm.\@ 6.7]{DiNPaVal12}) and Lemma~\ref{lem:continuity} to get that
\begin{align*}
\Vert \xi_1 \Vert_{L^2(\Gamma)} \, &\lesssim \, \left\Vert \beta'(u_h^{(a+\eta_2)}) - \beta'(u_h^{(a+\eta_1)}) \right\Vert_{L^4(\Gamma)} \Vert \dot{w}_h^{(a+\eta_1)} \Vert_{L^4(\Gamma)} \\
\, &\lesssim \, \left\Vert u_h^{(a+\eta_2)} - u_h^{(a+\eta_1)} \right\Vert_{H^{1/2}(\Gamma)} \Vert \dot{w}_h^{(a+\eta_1)} \Vert_{H^{1/2}(\Gamma)} \\
\, &\lesssim \, \Vert \eta_1 - \eta_2 \Vert_{C^2(\Gamma)} \Vert \dot{w}_h^{(a+\eta_1)} \Vert_{H^1(\Omega)} \, .
\end{align*}
We remark that $\Vert \dot{w}_h^{(a+\eta_1)} \Vert_{H^1(\Omega)}$ can be further estimated by the right hand sides in its corresponding weak formulation. In total, we get that
\begin{align}\label{eq:xibound}
\Vert \xi \Vert_{L^2(\Gamma)} \, \lesssim \, \Vert \eta_1 - \eta_2 \Vert_{C^2(\Gamma)} \, .
\end{align}
The well-posedness bound for $w^*$ now yields that
\begin{align*}
\Vert w^* \Vert_{H^1(\Omega)} \, \lesssim \, \Vert \dot{f}^* \Vert_{H^1(\Omega)'} + \Vert \dot{g}^* \Vert_{H^{-1/2}(\Gamma)} + \Vert \xi \Vert_{L^2(\Gamma)}
\end{align*}
with an implicit constant that does not depend on the mesh size $h$. The assumptions in \eqref{eq:fstargstarass} together with \eqref{eq:xibound} conclude the proof.
\end{proof}
We use the same terminology for the linearization of $z_h^{(a)}$ as for $u_h^{(a)}$ in \eqref{def:Lambda}.
We consider the definition for $z_h^{(a)} \in V_{\diamond,h}$ from \eqref{eq:zh} and for a fixed $a\in C^2(\Gamma)$ we introduce
\begin{align}\label{def:Lambda2}
Z_a : D(Z_a) \subset C^2(\Gamma) \to V_{\diamond,h}\, , \quad Z_a(\eta) \, = \,  z_h^{(a+\eta)}\, ,
\end{align}
where $D(Z_a)$ is a neighborhood of the zero function in $C^2(\Gamma)$ that is so small that $z_h^{(a+\eta)}$ is well-defined for any $\eta\in D(Z_a)$.
We also note that $Z_a(0) = z_h^{(a)}$.
The following corollary is a consequence of Lemma~\ref{lem:continuity} together with Lemma~\ref{lem:gencont}.
\begin{corollary}\label{cor:zhlin}
For $a \in C^2(\Gamma)$ with $0<a_0\leq a$ the operator $Z_a$ from \eqref{def:Lambda2} satisfies
\begin{align*}
\Vert Z_a(\eta_1) - Z_a(\eta_2)\Vert_{H^1(\Omega)}\, \lesssim \, \Vert \eta_1 - \eta_2 \Vert_{C^2(\Gamma)}
\end{align*}
for any $\eta_1, \eta_2 \in D(Z_a)$.
\end{corollary}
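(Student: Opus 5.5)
We will obtain Corollary~\ref{cor:zhlin} as a direct instance of Lemma~\ref{lem:gencont}, with the data families chosen so that the resulting discrete solutions are exactly $z_h^{(a+\eta)}$. Comparing \eqref{eq:zh} with \eqref{eq:uhdot}, the function $z_h^{(a+\eta)}$ solves \eqref{eq:uhdot} with $a$ replaced by $a+\eta$ and with the data
\begin{align*}
\dot f^{(a+\eta)} \, = \, 1_\omega\big(u_h^{(a+\eta)} - q\big)\, , \qquad \dot g^{(a+\eta)} \, = \, 0\, .
\end{align*}
Hence $\dot w_h^{(a+\eta_j)} = z_h^{(a+\eta_j)} = Z_a(\eta_j)$ for $j=1,2$. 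The conclusion of Lemma~\ref{lem:gencont} is then precisely the claimed estimate, provided its hypothesis \eqref{eq:fstargstarass} holds for these families.

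For the boundary data, \eqref{eq:fstargstarass2} is trivial, since $\dot g^{(a+\eta_1)} - \dot g^{(a+\eta_2)} = 0$.

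For the volume data, the datum $q$ cancels in the difference. Using $\Vert 1_\omega w\Vert_{(H^1(\Omega))'} \leq \Vert w \Vert_{L^2(\omega)} \leq \Vert w\Vert_{H^1(\Omega)}$ and then Lemma~\ref{lem:continuity}, we get
\begin{align*}
\Vert \dot f^{(a+\eta_1)} - \dot f^{(a+\eta_2)} \Vert_{(H^1(\Omega))'} \, \leq \, \Vert u_h^{(a+\eta_1)} - u_h^{(a+\eta_2)} \Vert_{H^1(\Omega)} \, \lesssim \, \Vert \eta_1 - \eta_2\Vert_{C^2(\Gamma)}\, .
\end{align*}
Note that Lemma~\ref{lem:gencont} states its hypothesis for general parameters $a_1,a_2$, whereas we only verify it along $a_j = a+\eta_j$ with $\eta_j\in D(Z_a)$. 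This restricted form is all that its proof actually uses.

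The only point requiring some care is uniformity. The proof of Lemma~\ref{lem:gencont} bounds $\Vert \dot w_h^{(a+\eta_1)}\Vert_{H^1(\Omega)}$ via the well-posedness estimate, which is applicable because $a+\eta_j \geq a_0/2>0$ for $\eta_j$ small. That estimate gives
\begin{align*}
\Vert z_h^{(a+\eta_1)}\Vert_{H^1(\Omega)} \, \lesssim \, \Vert u_h^{(a+\eta_1)} - q\Vert_{L^2(\omega)}\, ,
\end{align*}
and the right-hand side is bounded independently of $h$ and of $\eta_1\in D(Z_a)$. Indeed, $u_h^{(a+\eta_1)}$ satisfies the discrete analogue of the bound \eqref{eq:ubound}, and $q = u^{(\tilde a)}|_\omega$ is fixed data.

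If needed, we shrink $D(Z_a)$ so that it lies in $D(U_a)$, where Lemma~\ref{lem:continuity} applies, and so that the smallness of $\Vert\eta_j\Vert_{C^2(\Gamma)}$ required in Lemma~\ref{lem:gencont} holds. With the data checked and these uniform bounds in place, Lemma~\ref{lem:gencont} yields the statement.
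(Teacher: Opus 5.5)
Your proposal is correct and follows exactly the route the paper indicates: apply Lemma~\ref{lem:gencont} with $\dot f^{(a)} = 1_\omega(u_h^{(a)}-q)$ and $\dot g^{(a)}=0$. The volume part of hypothesis \eqref{eq:fstargstarass} follows from Lemma~\ref{lem:continuity}, and the boundary part is trivial. Your additional remarks make explicit what the paper leaves implicit: the $h$-independent bound on $\Vert z_h^{(a+\eta_1)}\Vert_{H^1(\Omega)}$, and the restriction of the hypothesis to parameters of the form $a+\eta_j$.
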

Next, we study the linearization of $Z_a$ at zero. Again, we find a characterization of $Z_a'(0)$ in terms of a finite element approximation to a Robin-type problem.
\begin{theorem}\label{thm:derzh}
Let $\dot{z}_h^{(a)} \in V_{\diamond,h}$ denote the unique solution of \eqref{eq:uhdot} with 
\begin{align}\label{eq:dotfdotgdotzh}
\dot{f} \, = \,  1_\omega\dot{u}_h^{(a)} \, , \quad \text{and } \quad
\dot{g} \, = \,  -\eta \beta'(u_h^{(a)})z_h^{(a)} - a \beta''(u_h^{(a)}) \dot{u}_h^{(a)}z_h^{(a)} \, ,
\end{align}
where $u_h^{(a)}$ and $z_h^{(a)}$ are defined by
\eqref{eq:uh} and \eqref{eq:zh} and $\dot{u}_h^{(a)}$ is defined in Theorem~\ref{thm:deruh}.
Then,
$\dot{z}_h^{(a)}= Z_a'(0)\eta$.
\end{theorem}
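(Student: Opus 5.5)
We mirror the proof of Theorem~\ref{thm:deruh}. Write $u_\eta = u_h^{(a+\eta)}$, $z_\eta = z_h^{(a+\eta)}$, and abbreviate $u_0=u_h^{(a)}$, $z_0=z_h^{(a)}$, $\dot u=\dot u_h^{(a)}$, $\dot z = \dot z_h^{(a)}$. The goal is
\begin{align*}
\Vert z_\eta - z_0 - \dot z\Vert_{H^1(\Omega)} \lesssim \Vert\eta\Vert_{C^2(\Gamma)}^2 ,
\end{align*}
which is more than enough for Fr\'echet differentiability.

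\textbf{Step 1: error equation.} We subtract \eqref{eq:zh} for $a$ from \eqref{eq:zh} for $a+\eta$, and then subtract \eqref{eq:uhdot} with the data \eqref{eq:dotfdotgdotzh}. Testing with $v = z_\eta - z_0 - \dot z \in V_{\diamond,h}$ and collecting terms gives
\begin{multline*}
\Vert\nabla v\Vert_{L^2(\Omega)}^2 + \int_\Gamma a\beta'(u_0) v^2 \ds
= -\int_\omega (u_\eta - u_0 - \dot u)v\dx - \int_\Gamma \eta\big(\beta'(u_\eta)z_\eta - \beta'(u_0)z_0\big)v\ds \\
- \int_\Gamma a\big(\beta'(u_\eta)-\beta'(u_0)-\beta''(u_0)\dot u\big)z_\eta v\ds - \int_\Gamma a\beta''(u_0)\dot u\,(z_\eta - z_0)v\ds .
\end{multline*}
Here we split $a\big(\beta'(u_\eta)z_\eta-\beta'(u_0)z_0\big) = a\beta'(u_0)(z_\eta-z_0) + a\big(\beta'(u_\eta)-\beta'(u_0)\big)z_\eta$, and inside the second piece we add and subtract $\beta''(u_0)\dot u$. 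The boundary term on the left is nonnegative, so by \eqref{eq:poincare-type-inequ} the left side controls $\Vert v\Vert_{H^1(\Omega)}^2$.

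\textbf{Step 2: bounding the four terms.} Each right-hand term should be $\lesssim \Vert\eta\Vert^2_{C^2(\Gamma)}\Vert v\Vert_{H^1(\Omega)}$.
\begin{itemize}
\item The first term is bounded directly by Theorem~\ref{thm:deruh}, whose proof gives exactly $\Vert u_\eta-u_0-\dot u\Vert_{H^1}\lesssim\Vert\eta\Vert^2$.
\item For the second term we write $\beta'(u_\eta)z_\eta-\beta'(u_0)z_0=(\beta'(u_\eta)-\beta'(u_0))z_\eta+\beta'(u_0)(z_\eta-z_0)$. Lemma~\ref{lem:continuity}, Corollary~\ref{cor:zhlin}, the $L^4(\Gamma)$ embedding and boundedness of $\beta'$ then give a bound $\Vert\eta\Vert\cdot\Vert\eta\Vert$.
\item The fourth term is bounded by $\Vert\dot u\Vert_{L^4(\Gamma)}\Vert z_\eta-z_0\Vert_{L^4(\Gamma)}\Vert v\Vert_{L^2(\Gamma)}$, using $\beta''\in C_b$. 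Here $\Vert\dot u\Vert_{H^1}\lesssim\Vert\eta\Vert$ follows from the well-posedness bound for \eqref{eq:uhdot}, and Corollary~\ref{cor:zhlin} supplies the second factor of $\Vert\eta\Vert$.
\end{itemize}

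\textbf{Step 3: the main obstacle, the third term.} This term is a trilinear boundary product, and the $L^4$ embedding of $H^{1/2}(\Gamma)$ leaves no room for a fourth factor. We split
\begin{align*}
\beta'(u_\eta)-\beta'(u_0)-\beta''(u_0)\dot u = \big[\beta'(u_\eta)-\beta'(u_0)-\beta''(u_0)(u_\eta-u_0)\big] + \beta''(u_0)(u_\eta-u_0-\dot u).
\end{align*}
The second piece is handled by H\"older with exponents $1/4+1/4+1/2$, with $z_\eta\in L^4(\Gamma)$ and Theorem~\ref{thm:deruh}. The first piece is pointwise $\lesssim |u_\eta-u_0|^2$, since $\beta'''$ is bounded. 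This produces $\int_\Gamma|u_\eta-u_0|^2|z_\eta||v|\ds$, a product of four functions.

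In $d=2$ this is harmless, because $H^{1/2}(\Gamma)\subset L^p(\Gamma)$ for every $p<\infty$. In $d=3$, where $\Gamma$ is two-dimensional, we only have $H^{1/2}(\Gamma)\subset L^4(\Gamma)$. We therefore first try to put $z_\eta$ in $L^\infty(\Gamma)$, uniformly in $h$. For piecewise linear functions we can use an inverse estimate, or $W^{1,\infty}$-stability of finite element approximations of the linear Robin problem on quasi-uniform meshes, together with the fact that $z_\eta$ approximates the smooth $z^{(a+\eta)}$. The term is then bounded by $\Vert u_\eta-u_0\Vert_{L^4(\Gamma)}^2\Vert v\Vert_{L^2(\Gamma)}\lesssim\Vert\eta\Vert^2\Vert v\Vert_{H^1}$.

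If that route fails, we fall back on a weaker conclusion that still suffices. We use only the uniform bound $\Vert z_\eta\Vert_{H^1}\lesssim 1$, together with $|\beta'(u_\eta)-\beta'(u_0)-\beta''(u_0)(u_\eta-u_0)|\le 2\Vert\beta''\Vert_\infty|u_\eta-u_0|$ and dominated convergence. This gives $o(\Vert\eta\Vert)$ instead of $\Vert\eta\Vert^2$, which is all Fr\'echet differentiability needs.

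Combining Steps 1--3 gives $\Vert v\Vert_{H^1(\Omega)}^2\lesssim \Vert\eta\Vert_{C^2(\Gamma)}^2\Vert v\Vert_{H^1(\Omega)}$, or at worst $o(\Vert\eta\Vert_{C^2(\Gamma)})\Vert v\Vert_{H^1(\Omega)}$. Hence $\dot z_h^{(a)}=Z_a'(0)\eta$.
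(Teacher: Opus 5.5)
Your error equation and the splitting of its right-hand side, including the add-and-subtract of $\beta''(u_0)(u_\eta-u_0)$, are the same as the paper's decomposition into $\dot\alpha$, $\dot\beta_{1,1,1}$, $\dot\beta_{1,1,2}$, $\dot\beta_{1,2}$, $\dot\beta_{2,1}$, $\dot\beta_{2,2}$. The paper bounds these data in $H^{-1/2}(\Gamma)$ through the well-posedness estimate rather than testing with $v$, which is equivalent.

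Where you go astray is the diagnosis in Step 3: there is no obstacle in $d=3$. There $\Gamma$ is two-dimensional and $H^{1/2}(\Gamma)\subset L^4(\Gamma)$. Four $L^4$ factors are exactly what H\"older permits, since $\tfrac14+\tfrac14+\tfrac14+\tfrac14=1$:
\begin{align*}
\int_\Gamma |u_\eta-u_0|^2\,|z_\eta|\,|v|\ds \, \le \, \Vert u_\eta-u_0\Vert_{L^4(\Gamma)}^2\Vert z_\eta\Vert_{L^4(\Gamma)}\Vert v\Vert_{L^4(\Gamma)} \, \lesssim \, \Vert\eta\Vert_{C^2(\Gamma)}^2\Vert v\Vert_{H^1(\Omega)} ,
\end{align*}
using Lemma~\ref{lem:continuity} and the uniform $H^1$ bound on $z_\eta$. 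This is the paper's estimate \eqref{eq:compcrit}, written there as $L^{4/3}(\Gamma)\subset H^{-1/2}(\Gamma)$ together with H\"older for $\tfrac14+\tfrac14+\tfrac14=\tfrac34$. It gives the quadratic remainder with an $h$-independent constant in both dimensions.

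Your two workarounds are therefore unnecessary, and the first is not justified as written.
\begin{itemize}
\item An inverse estimate only gives $\Vert z_\eta\Vert_{L^\infty(\Gamma)}\le C(h)\Vert z_\eta\Vert_{H^1(\Omega)}$ with $C(h)\to\infty$, not a bound uniform in $h$. The $W^{1,\infty}$-stability of the discrete Robin problem is a substantial result that you neither prove nor cite. At fixed $h$ an $h$-dependent constant would actually suffice for the bare identity $\dot z_h^{(a)}=Z_a'(0)\eta$, but that is not the argument you made.
\item The fallback needs more than dominated convergence, because $(u_\eta-u_0)/\Vert\eta\Vert$, $z_\eta$ and $v$ all move with $\eta$. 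The clean route is to write the remainder as $m_\eta(u_\eta-u_0)$ with $m_\eta=\int_0^1\big(\beta''(u_0+t(u_\eta-u_0))-\beta''(u_0)\big)\dt$. Then apply H\"older with $m_\eta\in L^4(\Gamma)$, which is again four $L^4$ factors. Since $\beta''$ is Lipschitz, $\Vert m_\eta\Vert_{L^4(\Gamma)}\lesssim\Vert\eta\Vert_{C^2(\Gamma)}$ directly, and you are back at the quadratic bound above.
\end{itemize}
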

\begin{proof}
We use notation introduced in the proof of Lemma~\ref{lem:gencont}.
Subtracting the weak formulations for $z_h^{(a+\eta)}$ and $z_h^{(a)}$ from \eqref{eq:zh} yields that $z^*=z_h^{(a+\eta)} - z_h^{(a)}$ solves
\begin{align*}
\dot{b}[u_h^{(a)}](z^*, v) \, = \, \dot{\ell}_{1_\omega(u_h^{(a+\eta)} - u_h^{(a)}), \xi}(v) \quad \text{for all } v \in V_{\diamond, h}\, ,
\end{align*}
where $\xi = \xi_1 + \xi_2$ is given just as in \eqref{def:xi1xi2} with the replacements $\eta_1 = \eta$, $\eta_2 = 0$ and $\dot{w}_h = \dot{z}_h$.
Subtracting now the weak formulation for $\dot{z}_h^{(a)}$
yields that $z^* - \dot{z}_h^{(a)}$ satisfies
\begin{align*}
\dot{b}[u_h^{(a)}](z^*- \dot{z}_h^{(a)}, v) \, = \, \dot{\ell}_{\dot{\alpha}, \dot{\beta}}(v) \quad \text{for all } v \in V_{\diamond, h}\, 
\end{align*}
with $\dot{\beta} = \dot{\beta}_1 + \dot{\beta}_2$ and
\begin{align*}
\dot{\alpha} \, &= \, 1_\omega(u_h^{(a+\eta)} - u_h^{(a)} - \dot{u}_h^{(a)}) \, , \\
\dot{\beta}_1 \, &= \, a \left( \beta'(u_h^{(a)}) - \beta'(u_h^{(a+\eta)}) \right) z_h^{(a+\eta)} + a \beta''(u_h^{(a)}) \dot{u}_h^{(a)} z_h^{(a)} \, , \\
\dot{\beta}_2 \, &= \, -\eta \beta'(u_h^{(a+\eta)}) z_h^{(a+\eta)} + \eta \beta'(u_h^{(a)}) z_h^{(a)} \, .
\end{align*}
Due to the well-posedness bound we have that
\begin{align}\label{eq:frechbound}
\Vert z^*- \dot{z}_h^{(a)} \Vert_{H^1(\Omega)} \, \lesssim \, \Vert \dot{\alpha} \Vert_{(H^1(\Omega))'} + \Vert \dot{\beta} \Vert_{H^{-1/2}(\Gamma)} \, .
\end{align}
All we need to show is that the right hand side of \eqref{eq:frechbound} decays as $\Vert \eta \Vert_{C^2(\Gamma)}^2$ as $\Vert \eta \Vert_{C^2(\Gamma)} \to 0$.
By Theorem~\ref{thm:deruh} we immediately obtain that
\begin{align*}
\Vert \dot{\alpha} \Vert_{(H^1(\Omega))'} \, \lesssim \, \Vert \eta \Vert_{C^2(\Gamma)}^2\, .
\end{align*}
For the bound of $\dot{\beta}$ we further split $\dot{\beta}_1=\dot{\beta}_{1,1} + \dot{\beta}_{1,2}$ and $\dot{\beta}_2=\dot{\beta}_{2,1} + \dot{\beta}_{2,2}$ with
\begin{align*}
\dot{\beta}_{1,1} \, &= \, a \left(\beta'(u_h^{(a)}) - \beta'(u_h^{(a+\eta)}) + \beta''(u_h^{(a)})\dot{u}_h^{(a)} \right) z_h^{(a+\eta)} \, , \\
\dot{\beta}_{1,2} \, &= \, a \beta''(u_h^{(a)}) \dot{u}_h^{(a)} ( z_h^{(a)} - z_h^{(a+\eta)})\, , \\
\dot{\beta}_{2,1} \, &= \, -\eta \left( \beta'(u_h^{(a+\eta)}) - \beta'(u_h^{(a)}) \right) z_h^{(a+\eta)} \, ,\\
\dot{\beta}_{2,2} \, &= \, \eta \beta'(u_h^{(a)}) (z_h^{(a)} - z_h^{(a+\eta)}) \, .
\end{align*}
Bounding $\dot{\beta}_{2,2}$ is simple and can be done by using Corollary~\ref{cor:zhlin} to get
\begin{align*}
\Vert \dot{\beta}_{2,2} \Vert_{H^{-1/2}(\Gamma)} \, \lesssim \, \Vert \dot{\beta}_{2,2} \Vert_{L^2(\Gamma)} \, \lesssim \, \Vert \eta \Vert_{C^2(\Gamma)}^2 \, .
\end{align*}
For $\dot{\beta}_{2,1}$ we use Hölder's inequality with coefficients satisfying $1/4+1/4=1/2$, the embedding $H^{1/2}(\Gamma)$ into $L^4(\Gamma)$ and Lemma~\ref{lem:continuity} to get
\begin{align*}
\Vert \dot{\beta}_{2,1} \Vert_{H^{-1/2}(\Gamma)} \, &\lesssim \, \Vert \dot{\beta}_{2,1} \Vert_{L^2(\Gamma)} \\
\, &\lesssim \, \Vert \eta \Vert_{C^2(\Gamma)} \Vert \beta'(u_h^{(a+\eta)}) - \beta'(u_h^{(a)}) \Vert_{L^4(\Gamma)}  \Vert z_h^{(a+\eta)} \Vert_{L^4(\Gamma)} \\
\, &\lesssim \, \Vert \eta \Vert_{C^2(\Gamma)} \Vert u_h^{(a+\eta)} - u_h^{(a)} \Vert_{L^4(\Gamma)}  \Vert z_h^{(a+\eta)} \Vert_{L^4(\Gamma)} \\
\, &\lesssim \, \Vert \eta \Vert_{C^2(\Gamma)}^2 \, .
\end{align*}
For $\dot{\beta}_{1,2}$ we use similar estimates and note that due to the well-posedness estimate for $\dot{u}_h^{(a)}$ and Corollary~\ref{cor:zhlin} we get that
\begin{align*}
\Vert \dot{\beta}_{1,2} \Vert_{H^{-1/2}(\Gamma)} \, &\lesssim \, \Vert \dot{u}_h^{(a)} \Vert_{H^1(\Omega)} \Vert z_h^{(a)} - z_h^{(a+\eta)} \Vert_{H^1(\Omega)} 
\, \lesssim \, \Vert \eta \Vert_{C^2(\Gamma)}^2 \, .
\end{align*}
The bound for $\dot{\beta}_{1,1}$ is the most delicate one. First we split $\dot{\beta}_{1,1} = \dot{\beta}_{1,1,1} + \dot{\beta}_{1,1,2}$ with
\begin{align*}
\dot{\beta}_{1,1,1} \, &= \, -a\left( \beta'(u_h^{(a+\eta)}) - \beta'(u_h^{(a)}) - \beta''(u_h^{(a)}) (u_h^{(a+\eta)} - u_h^{(a)}) \right) z_h^{(a+\eta)} \, , \\
\dot{\beta}_{1,1,2} \, &= \, -a\beta''(u_h^{(a)}) ( u_h^{(a+\eta)} - u_h^{(a)} - \dot{u}_h^{(a)} ) z_h^{(a+\eta)} \, .
\end{align*}
As before, $\dot{\beta}_{1,1,2}$ can be bounded by using Hölder's inequality applied just like before together with Theorem~\ref{thm:deruh} to get that
\begin{align*}
\Vert \dot{\beta}_{1,1,2} \Vert_{H^{-1/2}(\Gamma)}\, \lesssim \, \Vert \eta \Vert_{C^2(\Gamma)}^2 \, .
\end{align*}
To bound $\dot{\beta}_{1,1,1}$ we first note that 
\begin{align}\label{eq:betaprimederprop}
|\beta'(u) - \beta'(v) - \beta''(v)(u-v)| \, \lesssim \, |u-v|^2 \quad \text{for any } u,v \in \R\, .
\end{align}
We now use in succession (i) $L^{4/3} \subset H^{-1/2}(\Gamma)$ (since $H^{1/2}(\Gamma) \subset L^4(\Gamma)$), (ii) the inequality \eqref{eq:betaprimederprop}, (iii) Hölder's inequality with coefficients satisfying $	1/4+1/4+1/4=3/4$, (iv) the embedding $H^{1/2}(\Gamma) \subset L^4(\Gamma)$ and (v) Lemma \ref{lem:continuity} to see that
\begin{align}\label{eq:compcrit}
\begin{split}
\Vert \dot{\beta}_{1,1,1} \Vert_{H^{-1/2}(\Gamma)}\, &\lesssim \, \Vert \dot{\beta}_{1,1,1} \Vert_{L^{4/3}(\Gamma)} \\
& \lesssim \, \Vert (u_h^{(a+\eta)} - u^{(a)})^2   z_h^{(a+\eta)} \Vert_{L^{4/3}(\Gamma)} \\
& \lesssim \, \Vert u_h^{(a+\eta)} - u^{(a)} \Vert_{L^4(\Gamma)}^2 \Vert z_h^{(a+\eta)} \Vert_{L^4(\Gamma)} \\
& \lesssim \, \Vert u_h^{(a+\eta)} - u^{(a)} \Vert_{H^{1/2}(\Gamma)}^2 \Vert z_h^{(a+\eta)} \Vert_{H^{1/2}(\Gamma)} \\
& \lesssim \, \Vert \eta \Vert_{C^2(\Gamma)}^2 \, .
\end{split}
\end{align}
This finishes the proof.
\end{proof}

\begin{remark}\label{rmk:cont}
Using the general continuity result from Lemma~\ref{lem:gencont} one can see that both $\dot{u}_h^{(a)}$ and $\dot{z}_h^{(a)}$ from Theorem~\ref{thm:deruh} and Theorem~\ref{thm:derzh} are Lipschitz continuous with respect to the parameter $a$.
The Lipschitz constants do not depend on the mesh size $h$.
To see this, one needs to verify that the conditions \eqref{eq:fstargstarass} are fulfilled.
For $\dot{u}_h^{(a)}$ this is rather straightforward, for $\dot{z}_h^{(a)}$ however, we need to apply similar steps as in the computation \eqref{eq:compcrit}.
We do not perform this computation in detail. We emphasize, however, that this analysis requires $\beta'''$ to exist. For $\dot{z}_h^{(a)}$ from Theorem~\ref{thm:derzh} to be Lipschitz continuous with respect to $a$, one needs to check that \eqref{eq:fstargstarass} for \eqref{eq:dotfdotgdotzh} is fulfilled. In order to prove \eqref{eq:fstargstarass2}, proceeding analogously as before, the difference $\beta''(u_h^{(a+\eta_1)}) - \beta''(u_h^{(a+\eta_2)})$ can be bounded by using Lipschitz continuity of $\beta''$ and Lemma~\ref{lem:continuity}.
\end{remark}

\section{Stable recovery of the friction parameter}
In \eqref{eq:Fh} we defined the function $F_h : V_J \to \R^J$ that needs to vanish in order for the Lagrangian $\Theta$ from \eqref{eq:theta} to have a saddle point. 
By now we can be more precise with the functions that appear in its definition.
The finite element approximations $u_h \in V_{\diamond,h}$ and $z_h \in V_{\diamond,h}$ are defined in \eqref{eq:uh} and \eqref{eq:zh}.
We can also determine the Fr\'echet derivative of $F_h$. For a fixed $a \in C^2(\Gamma)$ we define it by $\dot{F}_h[a]: V_J \to \R^J$, where
\begin{subequations}\label{eq:Fhdot}
\begin{align}
\dot{F}_h[a]\eta \, &= \, [\dot{F}_{h,1}[a]\eta, \dots, \dot{F}_{h,J}[a]\eta]\, , \quad \text{with } \\
 \dot{F}_{h,j}[a]\eta \, &= \, \int_{\Gamma} \phi_j \big(\beta'(u_h^{(a)})\dot{u}_h^{(a)} z_h^{(a)} + \beta(u_h^{(a)}) \dot{z}_h^{(a)} \big) \ds\, \quad \text{for } j=1,\dots, J\, .
\end{align}
\end{subequations}
The finite element approximations $\dot{u}_h^{(a)}$ and $\dot{z}_h^{(a)}$ are defined in Theorem~\ref{thm:deruh} and Theorem~\ref{thm:derzh}, respectively.
For the function $\dot{F}_h[a]$ we can immediately conclude two corollaries that follow from our previous analysis.
\begin{corollary}\label{cor:lipFdot}
The Fr\'echet derivative $\dot{F}_h[a]$ is locally Lipschitz continuous in $a \in V_J$, i.e., there is a $\rho>0$ such that for any $a_1, a_2 \in B_\rho(a)$ there holds
\begin{align}\label{eq:frechFhdot}
\Vert \dot{F}_h[a_1] - \dot{F}_h[a_2] \Vert_{\R^J \leftarrow V_J} \, \lesssim \, \Vert a_1 - a_2 \Vert_{C^2(\Gamma)}\, .
\end{align}
The constant in \eqref{eq:frechFhdot} does not depend on the mesh size $h$.
\end{corollary}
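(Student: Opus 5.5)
The plan is to estimate $\dot{F}_{h,j}[a_1]\eta - \dot{F}_{h,j}[a_2]\eta$ for each $j$ and each $\eta\in V_J$ with $\Vert\eta\Vert_{C^2(\Gamma)}\le 1$. Each piece should be bounded by $\Vert a_1-a_2\Vert_{C^2(\Gamma)}$, uniformly in $h$. Since $V_J$ is finite dimensional and $\phi_j\in C^2(\Gamma)$, it suffices to bound the $L^1(\Gamma)$-norm of the differences of the two integrands. I write $a_i = a+\eta_i'$ with $\eta_i'\in B_\rho(0)$, so that $a_1,a_2$ stay in the common domain $D(U_a)\cap D(Z_a)$ where all previous Lipschitz results hold. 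Then I expand the difference of products telescopically:
\begin{align*}
\beta'(u_1)\dot u_1 z_1-\beta'(u_2)\dot u_2 z_2 \,&=\, (\beta'(u_1)-\beta'(u_2))\dot u_1 z_1+\beta'(u_2)(\dot u_1-\dot u_2)z_1+\beta'(u_2)\dot u_2(z_1-z_2)\, ,\\
\beta(u_1)\dot z_1-\beta(u_2)\dot z_2 \,&=\, (\beta(u_1)-\beta(u_2))\dot z_1+\beta(u_2)(\dot z_1-\dot z_2)\, .
\end{align*}
Here $u_i=u_h^{(a_i)}$, $z_i=z_h^{(a_i)}$, and $\dot u_i,\dot z_i$ are the derivatives from Theorem~\ref{thm:deruh} and Theorem~\ref{thm:derzh} in direction $\eta$, evaluated at $a_i$.

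Each term is handled by Hölder's inequality with exponents $1/4+1/4+1/2\le 1$ on $\Gamma$, the embedding $H^{1/2}(\Gamma)\subset L^4(\Gamma)$, and the trace theorem. This reduces everything to $H^1(\Omega)$-norms. For the factors that are not differenced I need $h$-independent bounds.
\begin{itemize}
\item $\Vert u_i\Vert_{H^1}$ is bounded by the discrete analogue of \eqref{eq:ubound}.
\item $\Vert z_i\Vert_{H^1}\lesssim \Vert u_i-q\Vert_{L^2(\omega)}$ follows by well-posedness of \eqref{eq:zh}.
\item $\Vert\dot u_i\Vert_{H^1}\lesssim\Vert\eta\Vert_{C^2(\Gamma)}$ follows by well-posedness with $\dot g=-\eta\beta(u_i)$.
\item $\Vert\dot z_i\Vert_{H^1}\lesssim\Vert\eta\Vert_{C^2(\Gamma)}$ follows by well-posedness with the data \eqref{eq:dotfdotgdotzh}. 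The product $\beta''(u_i)\dot u_i z_i$ is controlled in $L^2(\Gamma)$ by the $L^4$ embeddings, using $\beta''\in C_b(\R)$.
\end{itemize}
The differenced factors are controlled as follows. $\Vert u_1-u_2\Vert_{H^1}\lesssim\Vert a_1-a_2\Vert_{C^2(\Gamma)}$ by Lemma~\ref{lem:continuity}. Combined with the global Lipschitz continuity of $\beta$ and $\beta'$, this handles the $\beta$- and $\beta'$-differences. Corollary~\ref{cor:zhlin} gives the same bound for $z_1-z_2$.

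The remaining differences $\dot u_1-\dot u_2$ and $\dot z_1-\dot z_2$ are handled by Remark~\ref{rmk:cont}, that is, by Lemma~\ref{lem:gencont} applied with $\dot f^{(a)},\dot g^{(a)}$ from Theorem~\ref{thm:deruh} and Theorem~\ref{thm:derzh}. For $\dot u$, assumption \eqref{eq:fstargstarass} is immediate: $\Vert\eta(\beta(u_1)-\beta(u_2))\Vert_{L^2(\Gamma)}\lesssim\Vert a_1-a_2\Vert_{C^2(\Gamma)}$. For $\dot z$, the condition on $\dot f = 1_\omega\dot u_h$ follows from the $\dot u$-case just proved. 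The condition on $\dot g$ requires differencing $\eta\beta'(u_i)z_i$ and $a_i\beta''(u_i)\dot u_i z_i$ telescopically. The term $(\beta''(u_1)-\beta''(u_2))\dot u_1 z_1$ is a triple product, so I bound it in $L^{4/3}(\Gamma)\subset H^{-1/2}(\Gamma)$ exactly as in \eqref{eq:compcrit}. This uses the Lipschitz continuity of $\beta''$, which is where $\beta'\in C_b^2$ is needed.

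I expect this verification of \eqref{eq:fstargstarass2} for $\dot z_h$ to be the main obstacle. It is the only place where the regularity $\beta'''$ enters and where one must use $L^{4/3}$ rather than $L^2$. The remaining concern is that all constants are independent of $h$. This holds because every bound uses only continuous well-posedness estimates applied on $V_{\diamond,h}$, coercivity with constants depending on $a_0$ and $\Vert\beta'\Vert_{L^\infty}$, and Lemma~\ref{lem:continuity} and Corollary~\ref{cor:zhlin}, none of which involve $h$. Taking the supremum over $\Vert\eta\Vert_{C^2(\Gamma)}\le1$ then gives \eqref{eq:frechFhdot}.
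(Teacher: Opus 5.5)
Your proposal is correct and follows essentially the same route as the paper, which simply notes that $u_h^{(a)}, z_h^{(a)}, \dot u_h^{(a)}, \dot z_h^{(a)}$ are locally Lipschitz in $a$ with $h$-independent constants (Lemma~\ref{lem:continuity}, Corollary~\ref{cor:zhlin}, Remark~\ref{rmk:cont}) and that $\dot F_{h,j}[a]\eta$ is a sum of products of these functions. You go further by writing out the telescoping, the uniform bounds on the undifferenced factors, and the check of \eqref{eq:fstargstarass2} for $\dot z_h^{(a)}$ via the $L^{4/3}(\Gamma)$ estimate; the paper only sketches this check in Remark~\ref{rmk:cont}.
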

\begin{proof}
By Lemma~\ref{lem:continuity}, Corollary~\ref{cor:zhlin} and Remark~\ref{rmk:cont} the functions $u_h^{(a)}, z_h^{(a)}, \dot{u}_h^{(a)}$ and $\dot{z}_h^{(a)}$ are locally Lipschitz continuous in the parameter $a \in V_J$. Since $\dot{F}_{h,j}[a]\eta$ consists of products and a sum of these functions, the result follows. 
\end{proof}

\begin{corollary}\label{cor:consistency}
For the function $F_h(\tilde{a})$ from \eqref{eq:Fh} there holds
\begin{align*}
| F_h(\tilde{a})| \, \lesssim \, h^2\, \quad \text{as } h \to 0\, .
\end{align*}
\end{corollary}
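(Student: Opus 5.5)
The bound follows by estimating each component $F_{h,j}(\tilde a)=\int_\Gamma \phi_j\beta(u_h^{(\tilde a)})z_h^{(\tilde a)}\ds$ separately; since $J$ is fixed, the Euclidean norm of $F_h(\tilde a)$ is then controlled by the maximum over $j$. The decisive ingredient is Lemma~\ref{lem:nicezhconv}, which shows that the adjoint approximation $z_h^{(\tilde a)}$ is already of size $h^2$ in $H^1(\Omega)$. The other two factors in the integrand only need bounds uniform in $h$.

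For fixed $j$, apply the Cauchy--Schwarz inequality on $\Gamma$:
\begin{align*}
|F_{h,j}(\tilde a)| \, \le \, \Vert\phi_j\Vert_{L^\infty(\Gamma)}\,\Vert\beta(u_h^{(\tilde a)})\Vert_{L^2(\Gamma)}\,\Vert z_h^{(\tilde a)}\Vert_{L^2(\Gamma)} .
\end{align*}
The three factors are handled as follows.
\begin{itemize}
\item Since $\phi_j\in C^2(\Gamma)$, the first factor is a fixed constant.
\item Since $\beta(0)=0$ and $\beta$ is globally Lipschitz, $|\beta(r)|\lesssim|r|$. The trace theorem then gives $\Vert\beta(u_h^{(\tilde a)})\Vert_{L^2(\Gamma)}\lesssim\Vert u_h^{(\tilde a)}\Vert_{H^1(\Omega)}$, and the discrete analogue of \eqref{eq:ubound} stated after \eqref{eq:uh} bounds this by $\Vert f\Vert_{L^2(\Omega)}+\Vert g\Vert_{H^{-1/2}(\Gamma)}$, independently of $h$.
\item By the trace theorem and Lemma~\ref{lem:nicezhconv}, $\Vert z_h^{(\tilde a)}\Vert_{L^2(\Gamma)}\lesssim\Vert z_h^{(\tilde a)}\Vert_{H^1(\Omega)}\lesssim h^2$.
\end{itemize}
Multiplying the three bounds gives $|F_{h,j}(\tilde a)|\lesssim h^2$ for each $j$, and summing over the finitely many $j$ completes the proof.

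The proof is essentially routine once Lemma~\ref{lem:nicezhconv} is available. The main point needing care is that its constant is independent of $h$. That constant comes from the $h$-uniform coercivity of $\dot b[u_h^{(\tilde a)}]$: the boundary term $a\beta'(u_h^{(\tilde a)})\ge 0$ contributes a nonnegative quantity, so the Poincaré-type inequality \eqref{eq:poincare-type-inequ} alone yields coercivity. The constant also involves the $L^2$ estimate of Lemma~\ref{lem:L2} applied to $u_h^{(\tilde a)}-q=(u_h^{(\tilde a)}-u^{(\tilde a)})|_\omega$, which is finite because $\Vert u^{(\tilde a)}\Vert_{H^2(\Omega)}$ is bounded by Theorem~\ref{thm:howp}.
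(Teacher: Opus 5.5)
Your proposal is correct and matches the paper's proof, which is just the Cauchy--Schwarz inequality combined with Lemma~\ref{lem:nicezhconv}. You also fill in what the paper leaves implicit: the $h$-uniform bound on $\Vert\beta(u_h^{(\tilde a)})\Vert_{L^2(\Gamma)}$, obtained from $\beta(0)=0$, the Lipschitz continuity of $\beta$, the trace theorem and the discrete stability estimate.
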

\begin{proof}
This is a consequence of the Cauchy--Schwarz inequality and the convergence property from Lemma~\ref{lem:nicezhconv}.
Note that this requires that $F$ is evaluated at $\tilde{a}$.
\end{proof}
The framework established in \cite{Keller75} requires three conditions to be satisfied for the Newton scheme applied to $F_h$ to become a stable recovery method for $\tilde{a}$. 
These conditions are (i) the Lipschitz continuity of $\dot{F}_h$ from Corollary~\ref{cor:lipFdot}, (ii) the consistency of $F_h$ of order $p$ (here $p=2$) from Corollary~\ref{cor:consistency} and (iii) uniform boundedness of $(\dot{F}_h[\tilde{a}])^{-1}$. The latter property will be proven in the next lemma leading to our main result.

\begin{lemma}\label{lem:boundedinv}
Suppose that $\beta(u^{(\tilde{a})})$ vanishes at most at isolated points on $\Gamma$. 
For a sufficiently small maximal mesh size $h$ the Fr\'echet derivative $\dot{F}_h[\tilde{a}]$ of $F_h$ at $\tilde{a}$ satisfies
\begin{align*}
\Vert \eta \Vert_{C^2(\Gamma)} \, \lesssim \, | \dot{F}_h[\tilde{a}] \eta |_{\R^J} \, \quad \text{for all } \eta \in V_J\, .
\end{align*}
This implies that the inverse operator $(\dot{F}_h[\tilde{a}])^{-1}$ exists and that it is uniformly bounded independently of the mesh size $0<h<h_0$ for some $h_0>0$.
\end{lemma}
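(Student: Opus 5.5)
The plan is to test $\dot{F}_h[\tilde a]\eta$ against $\eta$ itself and reduce everything to the stability estimate of Corollary~\ref{cor:stability}. Write $\eta=\sum_j c_j\phi_j\in V_J$. By linearity,
\begin{align*}
c\cdot \dot{F}_h[\tilde a]\eta \, = \, \int_\Gamma \eta\big(\beta'(u_h^{(\tilde a)})\dot u_h^{(\tilde a)} z_h^{(\tilde a)} + \beta(u_h^{(\tilde a)})\dot z_h^{(\tilde a)}\big)\ds .
\end{align*}
All norms on $V_J$ are equivalent, so $|c|\eqsim\Vert\eta\Vert_{C^2(\Gamma)}$. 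It therefore suffices to show
\begin{align*}
\Vert\eta\Vert_{C^2(\Gamma)}^2 \, \lesssim \, c\cdot\dot F_h[\tilde a]\eta + h\Vert\eta\Vert_{C^2(\Gamma)}^2
\end{align*}
and then take $h$ small.

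The first term in the integral is $O(h^2\Vert\eta\Vert^2)$. This follows from Lemma~\ref{lem:nicezhconv}, the well-posedness bound $\Vert\dot u_h\Vert_{H^1}\lesssim\Vert\eta\Vert$, Hölder's inequality and the embedding $H^{1/2}(\Gamma)\subset L^4(\Gamma)$.

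For the second term, test the equation \eqref{eq:uhdot} satisfied by $\dot u_h$ with $v=\dot z_h$. This gives $\dot b[u_h](\dot u_h,\dot z_h)=-\int_\Gamma\eta\beta(u_h)\dot z_h\ds$. Since $\dot b$ is symmetric, this equals $\dot b[u_h](\dot z_h,\dot u_h)$. Now use the equation for $\dot z_h$ from Theorem~\ref{thm:derzh} with test function $\dot u_h$:
\begin{align*}
\dot b[u_h](\dot z_h,\dot u_h) \, = \, -\Vert\dot u_h\Vert_{L^2(\omega)}^2 + \int_\Gamma\big(-\eta\beta'(u_h)z_h - \tilde a\beta''(u_h)\dot u_h z_h\big)\dot u_h\ds .
\end{align*}
Again by Lemma~\ref{lem:nicezhconv}, the boundary terms are $O(h^2\Vert\eta\Vert^2)$. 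Hence
\begin{align*}
\int_\Gamma \eta\beta(u_h)\dot z_h\ds \, = \, \Vert\dot u_h^{(\tilde a)}\Vert_{L^2(\omega)}^2 + O(h^2\Vert\eta\Vert^2),
\end{align*}
and so
\begin{align*}
c\cdot\dot F_h[\tilde a]\eta \, \ge \, \Vert\dot u_h\Vert_{L^2(\omega)}^2 - Ch^2\Vert\eta\Vert^2 .
\end{align*}
Signs need checking against the convention in $\dot\ell$, but the structure is a Gauss–Newton-type identity.

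The main step is to bound $\Vert\eta\Vert$ from below by $\Vert\dot u_h\Vert_{L^2(\omega)}$. Corollary~\ref{cor:stability} gives $\Vert\dot u_h\Vert_{H^1}\lesssim\Vert\dot u_h\Vert_{L^2(\omega)}+h\Vert\eta\Vert$, so it suffices to show $\Vert\eta\Vert\lesssim\Vert\dot u_h\Vert_{H^1(\Omega)}+h\Vert\eta\Vert$.

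I would first obtain this for the continuous linearization $\dot u$, using \eqref{eq:error_to_full_uh} to pass to $\dot u_h$. Since $\dot u$ is harmonic, $\eta\beta(u^{(\tilde a)})=-(\partial_\nu\dot u+\tilde a\beta'(u)\dot u)$ on $\Gamma$. This gives $\Vert\eta\beta(u^{(\tilde a)})\Vert_{H^{-1/2}(\Gamma)}\lesssim\Vert\dot u\Vert_{H^1(\Omega)}$ by the weak definition of the conormal derivative. The map $\eta\mapsto\eta\beta(u^{(\tilde a)})$ is injective on $V_J$, because $\beta(u^{(\tilde a)})$ vanishes only at isolated points and $\eta$ is continuous. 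Finite dimensionality then yields $\Vert\eta\Vert_{C^2}\lesssim\Vert\eta\beta(u^{(\tilde a)})\Vert_{H^{-1/2}}$.

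Combining these bounds gives $\Vert\eta\Vert^2\lesssim c\cdot\dot F_h[\tilde a]\eta+h\Vert\eta\Vert^2$. By Cauchy–Schwarz, $c\cdot\dot F_h[\tilde a]\eta\le|c|\,|\dot F_h[\tilde a]\eta|\lesssim\Vert\eta\Vert\,|\dot F_h[\tilde a]\eta|$. Absorbing the $h$-term for $h<h_0$ yields the claim.

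Injectivity of $\dot F_h[\tilde a]$ on the $J$-dimensional space $V_J$ gives invertibility, with a bound uniform in $h$. The delicate point I expect is keeping all constants independent of $h$. This holds because the only $h$-dependence enters through Lemma~\ref{lem:nicezhconv} and \eqref{eq:error_to_full_uh}.
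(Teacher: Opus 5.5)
Your proposal is correct and follows the paper's proof essentially step by step: the same cross-testing of the $\dot u_h^{(\tilde a)}$ and $\dot z_h^{(\tilde a)}$ equations gives $\Vert \dot u_h^{(\tilde a)}\Vert_{L^2(\omega)}^2 = \underline{\eta}\cdot\dot F_h[\tilde a]\eta + O(h^2\Vert\eta\Vert_{C^2(\Gamma)}^2)$ via Lemma~\ref{lem:nicezhconv}, with your signs correct as written. The same chain through $\Vert \eta\beta(u^{(\tilde a)})\Vert_{H^{-1/2}(\Gamma)}$, the Robin condition for $\dot u^{(\tilde a)}$, \eqref{eq:error_to_full_uh} and Corollary~\ref{cor:stability} gives $\Vert\eta\Vert_{C^2(\Gamma)}\lesssim \Vert\dot u_h^{(\tilde a)}\Vert_{L^2(\omega)}$; the only minor difference is that you bound the remainder terms with the direct well-posedness estimate $\Vert \dot u_h^{(\tilde a)}\Vert_{H^1(\Omega)}\lesssim\Vert\eta\Vert_{C^2(\Gamma)}$ rather than the paper's Young's inequality plus Corollary~\ref{cor:stability}, which is a harmless simplification.
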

\begin{proof}
Any function $\eta \in V_J$ can be written as a linear combination of the functions $\phi_1, \dots, \phi_J$.
If $\eta$ has the representation
\begin{align*}
\eta \, = \, \sum_{j=1}^J \eta_j \phi_j \quad \text{then, for the vector } \; \underline{\eta} \, = \, [\eta_1, \dots, \eta_J]^T \in \R^J
\end{align*}
we obtain the norm equivalence $\Vert \eta \Vert_{C^2(\Gamma)} \sim | \underline{\eta} |_{\R^J}$. Moreover, by using the representation of $\dot{F}_h[a]$ from \eqref{eq:Fhdot} we find that
\begin{align}\label{eq:Fhdoteta}
\underline{\eta} \cdot \dot{F}_h[a] \eta \, = \, \int_\Gamma \eta \big(\beta'(u_h^{(a)})\dot{u}_h^{(a)} z_h^{(a)} + \beta(u_h^{(a)}) \dot{z}_h^{(a)} \big) \ds \, .
\end{align}
Following the proof of \cite[Lem.\@ 4.4]{BurKnoOks25} it can be seen that $a \mapsto \Vert a \beta(u^{(\tilde{a})})\Vert_{H^{-1/2}(\Gamma)}$ is a norm on $C^2(\Gamma)$.
This uses the assumption that $\beta(u^{(\tilde{a})})$ does not vanish on an open subset of $\Gamma$. Since $V_J \subset C^2(\Gamma)$ is a finite dimensional subspace we have the norm equivalence
\begin{align*}
\Vert \eta \Vert_{C^2(\Gamma)} \, \sim \, \Vert \eta \beta(u^{(\tilde{a})})\Vert_{H^{-1/2}(\Gamma)} \quad \text{for } \eta \in V_J\, .
\end{align*}
The constants in the norm equivalence estimates do not depend on the mesh size $h$ (but do depend of course on the dimension $J$).
As in the proof of Corollary~\ref{cor:stability} we consider the solution $\dot{u}^{(\tilde{a})}$ of \eqref{eq:udot} with $\tilde{f} = 0$ and $\tilde{g} = -\eta \beta(u^{(\tilde{a})})$ in place of $\dot{f}$ and $\dot{g}$.
We recall that $\dot{u}^{(\tilde{a})}$ satisfies the boundary condition \eqref{eq:linRob2} on $\Gamma$.
Accordingly, we use the norm equivalence, the boundary condition on $\Gamma$, the bound of $\partial_\nu \dot{u}^{(\tilde{a})}$ from, e.g., \cite[Lem.\@ 4.3]{McLean00}, the $H^1$ bound from Lemma~\ref{lem:felinearized}
and the stability bound for $\dot{u}_h^{(\tilde{a})}$ from Corollary~\ref{cor:stability} to conclude that 
\begin{align*}
\Vert \eta \Vert_{C^2(\Gamma)} \, \lesssim \, \Vert \eta \beta(u^{(\tilde{a})})\Vert_{H^{-1/2}(\Gamma)} \, &= \, \Vert \partial_\nu \dot{u}^{(\tilde{a})} + \tilde{a} \beta'(u^{(\tilde{a})})\dot{u}^{(\tilde{a})} \Vert_{H^{-1/2}(\Gamma)} \\
& \lesssim \, \Vert \partial_\nu \dot{u}^{(\tilde{a})} \Vert_{H^{-1/2}(\Gamma)} + \Vert \tilde{a} \beta'(u^{(\tilde{a})})\dot{u}^{(\tilde{a})}\Vert_{H^{-1/2}(\Gamma)} \\
&\lesssim \, \Vert \dot{u}^{(\tilde{a})} \Vert_{H^1(\Omega)} \\
&\lesssim \, \Vert \dot{u}_h^{(\tilde{a})} \Vert_{H^1(\Omega)} + h \Vert \eta \Vert_{C^2(\Gamma)} \\
&\lesssim \, \Vert \dot{u}_h^{(\tilde{a})} \Vert_{L^2(\omega)} + h \Vert \eta \Vert_{C^2(\Gamma)} \, .
\end{align*}
For a sufficiently small maximal mesh size $h$ this inequality can be rearranged to obtain
\begin{align}\label{eq:etabound}
\Vert \eta \Vert_{C^2(\Gamma)} \, \lesssim \,\Vert \dot{u}_h^{(\tilde{a})} \Vert_{L^2(\omega)}  \, .
\end{align}
We return to the weak formulation of $\dot{z}_h^{(\tilde{a})}$ from Theorem~\ref{thm:derzh} and insert $\dot{u}_h^{(\tilde{a})}$ as a test function. This yields
\begin{multline}\label{eq:zhdotwuhdot}
\Vert \dot{u}_h^{(\tilde{a})} \Vert_{L^2(\omega)}^2 \, = \, - \int_\Omega \nabla \dot{z}_h^{(\tilde{a})} \cdot \nabla \dot{u}_h^{(\tilde{a})} \dx - \int_\Gamma a \beta'({u}_h^{(\tilde{a})}) \dot{z}_h^{(\tilde{a})}\dot{u}_h^{(\tilde{a})} \ds \\
-\int_\Gamma \big( \eta \beta'(u_h^{(\tilde{a})}) + a \beta''(u_h^{(\tilde{a})}) \dot{u}_h^{(\tilde{a})}\big) z_h^{(\tilde{a})} \dot{u}_h^{(\tilde{a})} \ds\, .
\end{multline}
Using the weak formulation for $\dot{u}_h^{(\tilde{a})}$ from Theorem~\ref{thm:deruh} and inserting $\dot{z}_h^{(\tilde{a})}$ as a test function one realizes that
\begin{align*}
- \int_\Omega \nabla \dot{z}_h^{(\tilde{a})} \cdot \nabla \dot{u}_h^{(\tilde{a})} \dx - \int_\Gamma a \beta'({u}_h^{(\tilde{a})}) \dot{z}_h^{(\tilde{a})}\dot{u}_h^{(\tilde{a})} \ds \, = \, \int_\Gamma \eta \beta(u_h^{(\tilde{a})}) \dot{z}_h^{(\tilde{a})} \ds \, .
\end{align*}
Therefore, recalling \eqref{eq:Fhdoteta}, the equation \eqref{eq:zhdotwuhdot} turns into
\begin{align}\label{eq:uhdotl2}
\Vert \dot{u}_h^{(\tilde{a})} \Vert_{L^2(\omega)}^2 \, = \, \underline{\eta} \cdot \dot{F}_h[\tilde{a}]\eta + T_1 + T_2 \, ,
\end{align}
with
\begin{align*}
T_1 \, = \, -2 \int_\Gamma \eta \beta'(u_h^{(\tilde{a})}) z_h^{(\tilde{a})} \dot{u}_h^{(\tilde{a})} \ds \,  \quad \text{and } \quad T_2 \, = \, - \int_\Gamma a \beta''(u_h^{(\tilde{a})}) (\dot{u}_h^{(\tilde{a})})^2 z_h^{(\tilde{a})} \ds\, .
\end{align*}
To find a bound for $T_1$ we use H\"older's inequality and the Sobolev embedding $H^{1/2}(\Gamma) \subset L^4(\Gamma)$ similarly as in the previous proofs and find that
\begin{align*}
|T_1| \, &\lesssim \, \Vert \eta \Vert_{C^2(\Gamma)} \Vert \beta'(u_h^{(\tilde{a})}) \Vert_{L^2(\Gamma)} \Vert z_h^{(\tilde{a})} \Vert_{L^4(\Gamma)} \Vert \dot{u}_h^{(\tilde{a})} \Vert_{L^4(\Gamma)} \\
& \lesssim \, \Vert \eta \Vert_{C^2(\Gamma)} \Vert u^{(\tilde{a})} \Vert_{H^2(\Omega)} (1+h) \Vert z_h^{(\tilde{a})} \Vert_{H^1(\Omega)} \Vert \dot{u}_h^{(\tilde{a})} \Vert_{H^1(\Omega)} \, .
\end{align*}
Now we use Lemma~\ref{lem:nicezhconv} and Corollary~\ref{cor:stability} together with the basic inequality $ab \leq (a^2 + b^2)/2$ and get that
\begin{align}\label{eq:T1bound}
\begin{split}
|T_1| \, \lesssim \, (h \Vert \eta \Vert_{C^2(\Gamma)}) ( h \Vert \dot{u}_h^{(\tilde{a})} \Vert_{H^1(\Omega)} )
\, &\lesssim \, h^2 \Vert \eta \Vert_{C^2(\Gamma)}^2 + h^2 \Vert \dot{u}_h^{(\tilde{a})} \Vert_{H^1(\Omega)}^2 \\
\, &\lesssim \, h^2 \Vert \eta \Vert_{C^2(\Gamma)}^2 + h^2 \Vert \dot{u}_h^{(\tilde{a})} \Vert_{L^2(\omega)}^2 \, .
\end{split}
\end{align}
Using the same arguments for $T_2$ yields
\begin{align}\label{eq:T2bound}
\begin{split}
|T_2| \, &\lesssim \, \Vert a \Vert_{C^2(\Gamma)} \Vert \beta''(u_h^{(\tilde{a})}) \Vert_{L^4(\Gamma)} \Vert z_h^{(\tilde{a})} \Vert_{L^4(\Gamma)} \Vert \dot{u}_h^{(\tilde{a})} \Vert_{L^4(\Gamma)}^2 \, \\ &\lesssim \, h^2 \Vert \dot{u}_h^{(\tilde{a})} \Vert_{L^2(\omega)}^2 + h^4 \Vert \eta \Vert_{C^2(\Gamma)}^2\, .
\end{split}
\end{align}
Returning to \eqref{eq:uhdotl2} and using \eqref{eq:T1bound} and \eqref{eq:T2bound} yields after a rearrangement that
\begin{align*}
\Vert \dot{u}_h^{(\tilde{a})} \Vert_{L^2(\omega)}^2 \, \lesssim \, |\underline{\eta} \cdot \dot{F}_h[\tilde{a}]\eta| + h^2 \Vert \eta \Vert_{C^2(\Gamma)}^2 \, ,
\end{align*}
Another rearrangement in combination with \eqref{eq:etabound} now shows that
\begin{align*}
\Vert \eta \Vert_{C^2(\Gamma)}^2 \, \lesssim \, |\underline{\eta} \cdot \dot{F}_h[\tilde{a}]\eta|\, .
\end{align*}
Finally, the Cauchy--Schwarz inequality in $\R^J$ together with the norm equivalence $\Vert \eta \Vert_{C^2(\Gamma)} \sim | \underline{\eta} |_{\R^J}$ in $V_J$ concludes the proof.
\end{proof}
We can now state our main result for the reconstruction of $a$, which follows from \cite[Thm.\@ 3.6, Thm.\@ 3.7]{Keller75}.
We formulate it for potentially noisy data $q^\delta = u|_\omega + \delta$, where $\delta \in L^2(\omega)$.
To be precise, the previously proven Corollary~\ref{cor:lipFdot}, Corollary~\ref{cor:consistency} and Lemma~\ref{lem:boundedinv} only provide the result for $\delta = 0$.
For general $\delta \in L^2(\omega)$ with sufficiently small norm $\Vert \delta \Vert_{L^2(\omega)}$ the theorem can be proven by small amendments of the previous results similarly, as in \cite[Sec.\@ 5]{BurKnoOks25}.
To fix the notation for the noisy case, let us denote by $z_h^{(a), \delta} \in V_{\diamond,h}$ the solution to \eqref{eq:zh}, when $q$ is replaced by $q^\delta$. Moreover, denote the corresponding linearization from Theorem~\ref{thm:derzh} by $\dot{z}_h^{(a),\delta} \in V_{\diamond,h}$.
The functions $F_h^\delta$ and $\dot{F}_h^\delta[a]$ that replace \eqref{eq:Fh} and \eqref{eq:Fhdot} in the noisy case arise by replacing $z_h^{(a)}$ and $\dot{z}_h^{(a)}$ by $z_h^{(a),\delta}$ and $\dot{z}_h^{(a),\delta}$.
If there is no noise, i.e., $\delta=0$, the operators $F_h^\delta$ and $\dot{F}_h^\delta[a]$ coincide with $F_h$ and $\dot{F}_h[a]$.
\begin{theorem}\label{thm:main}
For given $f \in H^1(\Omega)$ and $g \in H^{3/2}(\Gamma)$, let $u^{(\tilde{a})} \in H_\diamond^1(\Omega)$ denote the solution to \eqref{eq:nonlin_robin} with $a = \tilde{a}$. Assume that $\beta(u^{(\tilde{a})})$ does not vanish on an open subset of $\Gamma$.
Let $\omega \subset \Omega$ be open and write $q = u^{(\tilde{a})}|_\omega$.
Moreover, let $\delta \in L^2(\omega)$ be a perturbation with sufficiently small norm $\Vert \delta \Vert_{L^2(\omega)}$ (which might also be zero). 
Denote the noisy data by $q^\delta = q + \delta$. Then, 
\begin{enumerate}
\item[(i)] For some sufficiently small mesh size $0<h\leq h_0$ and $\rho>0$ there is a unique $a_h^\delta \in B_\rho(\tilde{a})$ such that $F_h^\delta(a_h^\delta) = 0$. Moreover,
\begin{align}\label{eq:convergence}
\Vert \tilde{a} - a_h^\delta \Vert_{C^2(\Gamma)} \, \lesssim \, h^2 + \Vert \delta \Vert_{L^2(\omega)}\, .
\end{align}
The implicit constant in \eqref{eq:convergence} depends on the dimension of the finite dimensional space $J$ and on $\tilde{a}$.

\item[(ii)] For a suﬀiciently good initial guess the Newton iterates corresponding to the
equation $F_h^\delta(x) = 0$ converge quadratically to the unique root $a_h^\delta$ from part (i).
\end{enumerate}
\end{theorem}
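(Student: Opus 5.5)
The plan is to apply Keller's abstract framework \cite[Thm.\@ 3.6, Thm.\@ 3.7]{Keller75} to the map $F_h^\delta : V_J \to \R^J$. Keller's result needs three ingredients, uniform in $0<h\leq h_0$ and in small $\delta$:
\begin{enumerate}
\item[(a)] Lipschitz continuity of $\dot F_h^\delta[\cdot]$ on a ball $B_\rho(\tilde a)$;
\item[(b)] a consistency bound for $|F_h^\delta(\tilde a)|$;
\item[(c)] a uniform bound on $(\dot F_h^\delta[\tilde a])^{-1}$.
\end{enumerate}
Keller then yields a unique root $a_h^\delta \in B_\rho(\tilde a)$ with $\Vert \tilde a - a_h^\delta\Vert \lesssim |F_h^\delta(\tilde a)|$, which is (i), and quadratic convergence of Newton's method from a sufficiently good initial guess, which is (ii). Since $V_J$ is finite dimensional, all norms on it are equivalent, so I will work in $\Vert\cdot\Vert_{C^2(\Gamma)}$ and $|\cdot|_{\R^J}$ interchangeably.

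For $\delta=0$ the three ingredients are exactly Corollary~\ref{cor:lipFdot}, Corollary~\ref{cor:consistency} (giving $|F_h(\tilde a)|\lesssim h^2$) and Lemma~\ref{lem:boundedinv}. Here I note that ``does not vanish on an open subset'' is what is actually used in the norm-equivalence step of Lemma~\ref{lem:boundedinv}. For $\delta\neq 0$, data enter only through the source term $1_\omega(u_h^{(a)}-q^\delta)$ in \eqref{eq:zh}, and $\dot z_h^{(a),\delta}$ depends on $\delta$ only through $z_h^{(a),\delta}$. Hence $z_h^{(a),\delta}-z_h^{(a)}$ solves \eqref{eq:uhdot} with $\dot f = 1_\omega\delta$ and $\dot g=0$. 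The $h$-independent well-posedness bound then gives $\Vert z_h^{(a),\delta}-z_h^{(a)}\Vert_{H^1(\Omega)}\lesssim \Vert\delta\Vert_{L^2(\omega)}$, uniformly in $a$ near $\tilde a$.

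\textbf{Consistency.} From this bound, Cauchy--Schwarz on $\Gamma$ together with Lemma~\ref{lem:nicezhconv} gives $|F_h^\delta(\tilde a)|\lesssim h^2+\Vert\delta\Vert_{L^2(\omega)}$.

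\textbf{Lipschitz continuity.} The maps $a\mapsto z_h^{(a),\delta}$ and $a\mapsto \dot z_h^{(a),\delta}$ stay Lipschitz with $h$- and $\delta$-independent constants. To see this, rerun Lemma~\ref{lem:gencont} and Remark~\ref{rmk:cont}: the extra term $1_\omega\delta$ is independent of $a$ and so drops out of the differences in \eqref{eq:fstargstarass}. Only the a priori bounds on $\Vert z_h^{(a),\delta}\Vert_{H^1(\Omega)}$ enter, and these are bounded for bounded $\delta$.

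\textbf{Uniform invertibility.} This is the main obstacle. I would redo the proof of Lemma~\ref{lem:boundedinv} with $z_h^{(\tilde a),\delta}$ in place of $z_h^{(\tilde a)}$. The identity \eqref{eq:uhdotl2} holds verbatim, since it used only the weak forms. In the bounds \eqref{eq:T1bound}--\eqref{eq:T2bound} the factor $h^2$ coming from Lemma~\ref{lem:nicezhconv} becomes $h^2+\Vert\delta\Vert_{L^2(\omega)}$. The estimate \eqref{eq:etabound} is unaffected because it does not involve $z_h$. So
\begin{align*}
|T_1|+|T_2|\lesssim (h+\Vert\delta\Vert_{L^2(\omega)})\big(\Vert\eta\Vert_{C^2(\Gamma)}^2+\Vert\dot u_h^{(\tilde a)}\Vert_{L^2(\omega)}^2\big).
\end{align*}
For $h\leq h_0$ and $\Vert\delta\Vert_{L^2(\omega)}$ below a fixed threshold, these terms can be absorbed, giving $\Vert\eta\Vert_{C^2(\Gamma)}\lesssim|\dot F_h^\delta[\tilde a]\eta|$ uniformly. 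This is where the smallness of $\delta$ is needed.

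\textbf{Conclusion.} With (a)--(c) in hand, Keller's theorems apply. The ball radius $\rho$ and the Newton basin are determined by the uniform constants in (a) and (c), and they require $|F_h^\delta(\tilde a)|$ to be small. This is guaranteed by $h\leq h_0$ and the smallness of $\delta$, which yields \eqref{eq:convergence} and statement (ii).
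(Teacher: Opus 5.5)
Your proposal is correct and follows the paper's route: the theorem is obtained from \cite[Thm.\@ 3.6, Thm.\@ 3.7]{Keller75}, with Corollary~\ref{cor:lipFdot}, Corollary~\ref{cor:consistency} and Lemma~\ref{lem:boundedinv} as the three ingredients for $\delta=0$. For $\delta\neq 0$ the paper only points to the amendments in \cite[Sec.\@ 5]{BurKnoOks25}, and your argument is a faithful way of filling those in: $\delta$ enters only through $z_h^{(a),\delta}$, with $\Vert z_h^{(a),\delta}-z_h^{(a)}\Vert_{H^1(\Omega)}\lesssim\Vert\delta\Vert_{L^2(\omega)}$, so consistency becomes $h^2+\Vert\delta\Vert_{L^2(\omega)}$ and the terms $T_1,T_2$ stay absorbable for small $h$ and $\delta$.
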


\section{Numerical examples}
In this section we explore some numerical examples that highlight our theoretical findings and rates.
In particular we are interested in verifying the convergence rate that we established in \eqref{eq:convergence}.
For our numerical examples we focus on the geometrical setup, in which $\Omega_2 \subset \R^2$ has a flower shape and $\Omega_1 \subset \Omega_2$ is defined by $\Omega_1 = B_{1/4}(0)$. As before, $\Gamma = \partial \Omega_1$ and $\Gamma_0= \partial \Omega_2$. 
We consider a set of basis functions on $\Gamma$, which are the trigonometric functions $\phi_{1,m}$ and $\phi_{2,n}$ defined by
\begin{align*}
\phi_{1,m-1}(t)\, = \, \frac{2}{\sqrt{\pi}} \cos((m-1)t)\, , \quad \phi_{2,n}(t)\, = \, \frac{2}{\sqrt{\pi}} \sin(nt)\, .
\end{align*}
The finite dimensional space $V_J$ is defined as the span of the functions $\phi_{1,m-1}$ and $\phi_{2,n}$ for $m=1,\dots, J_1$ and $n=1,\dots,J_2$, i.e., $J=J_1 + J_2$. Instead of $V_J$ we also write $V_{J_1,J_2}$. Any function $a \in V_{J_1, J_2}$ can be written as 
\begin{align}\label{eq:arep}
a \, = \, \sum_{m=1}^{J_1}\alpha_m \phi_{1,m-1} + \sum_{n=1}^{J_2} \beta_n \phi_{2,n}\, .
\end{align}
The numerical reconstruction method for recovering $\tilde{a} \in V_{J_1, J_2}$ is Newton's method. 
The variables that we aim to optimize are the parameters $(\alpha_m)_{m=1}^{J_1}$ and $(\beta_n)_{n=1}^{J_2}$ from \eqref{eq:arep}.
For the $k$-th iteration step, the vector of variables that contains the coefficients $\alpha_m$ and $\beta_n$ in succession is denoted by $x_k$.
In our Newton method we additionally incorporate a backtracking line search, which is supposed to stabilize the algorithm somewhat.
In particular, it is supposed to keep the parameter $a$ positive, which is required for the well-posedness of the problems \eqref{eq:nonlin_robin} and \eqref{eq:linRob}.
In detail, when denoting by $a(x_k)$ the current iterate as defined in \eqref{eq:arep} and by $d_k$ the Newton update of the $k$-th iteration step, we determine the smallest number $\kappa \in \N_0$ such that
\begin{align*}
|F(a(x_k+(0.5)^\kappa d_k)) | \, \leq \, |F_h(a(x_{k})| \quad \text{and }  a(x_k+(0.5)^\kappa d_k) >0
\end{align*}
and define the next iterate to be $x_{k+1} = x_k+(0.5)^\kappa d_k$.

For our numerical examples we consider the function $\beta:\R \to \R$ as defined in \eqref{eq:betadef} with $\varepsilon = 1$.
Moreover, we pick the right hand sides $f$ and $g$ given by
\begin{align*}
f(x,y) \, = \, -5xy\mathrm{e}^{\sin(4\pi y)}\, ,\quad (x,y) \in \Omega \quad \text{and } \quad g(x,y) \, = \, 0\, , \quad (x,y) \in \Gamma\, .
\end{align*}
The open set $\omega \subset \Omega$ on which we assume given data is given by six circles located in the petals of the flower shaped domain.
The both discs that lie on the $x$-axis have the radius $0.25$, respectively, the other discs have the radius $0.2$.
The overall size of $\omega$ covers around 29\% of the area of $\Omega$.

All finite element simulations are carried out on a polygonal domain $\Omega_h \approx \Omega$ by the open source computing platform FEniCS.
The parameter $\tilde{a}$, which we aim to reconstruct is given by a linear combination as in \eqref{eq:arep} with $J_1=J_2=6$.
In our numerical examples we also use $J_1=J_2=6$, i.e., the finite dimensional space $V_J$ has the dimension 12.
As a reference solution to compute $q = u^{(\tilde{a})}|_\omega$, we employ a second order finite element method with the maximal mesh size $h \approx 8 \times 10^{-4}$, which is a smaller mesh size than the one we use for the reconstruction. In particular, we highlight that the finer mesh is not a structured uniform refinement of the coarser mesh.
This is supposed to avoid inverse crime.
A visualization of the geometric setup together with the reference solution on $\Omega$ and on $\omega$, is given in Figure~\ref{fig:ex1}.

\textbf{Example 1.}
In our first numerical example the aim is to visually explore the reconstruction of the parameter $\tilde{a}$.
We start with the constant initial guess $a = 2$, use a mesh with the maximal mesh size $h \approx 1.4\times 10^{-3}$ and start the Newton scheme.
\begin{figure}[t!]
\centering 
\raisebox{0.mm}{\includegraphics[scale=.275]{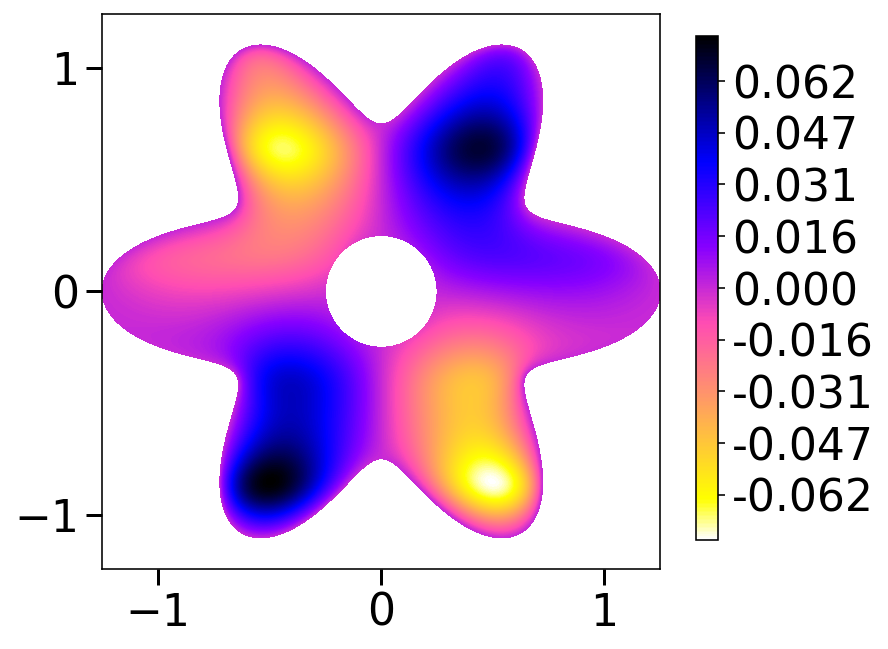}}\hfill
\raisebox{0.mm}{\includegraphics[scale=.275]{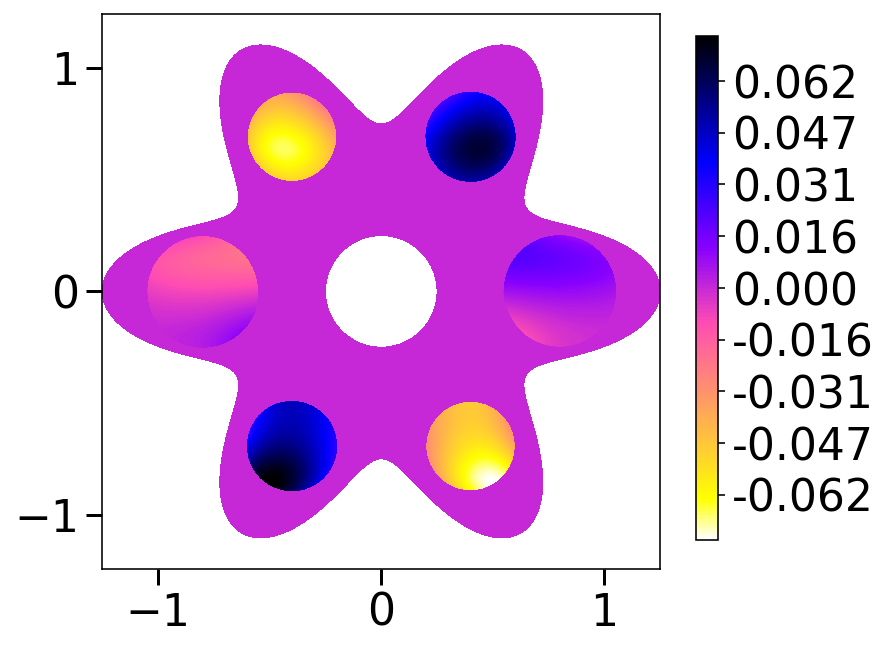}}\hfill
\includegraphics[scale=.23]{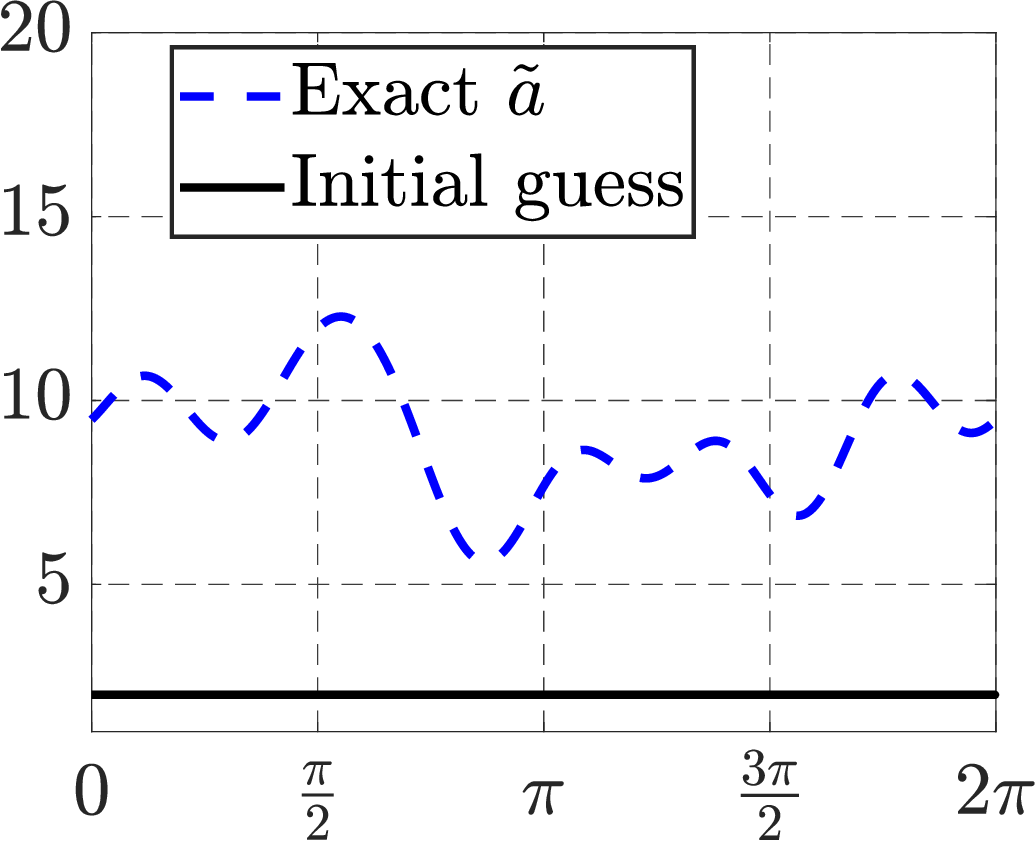}\hfill \\
\includegraphics[scale=.23]{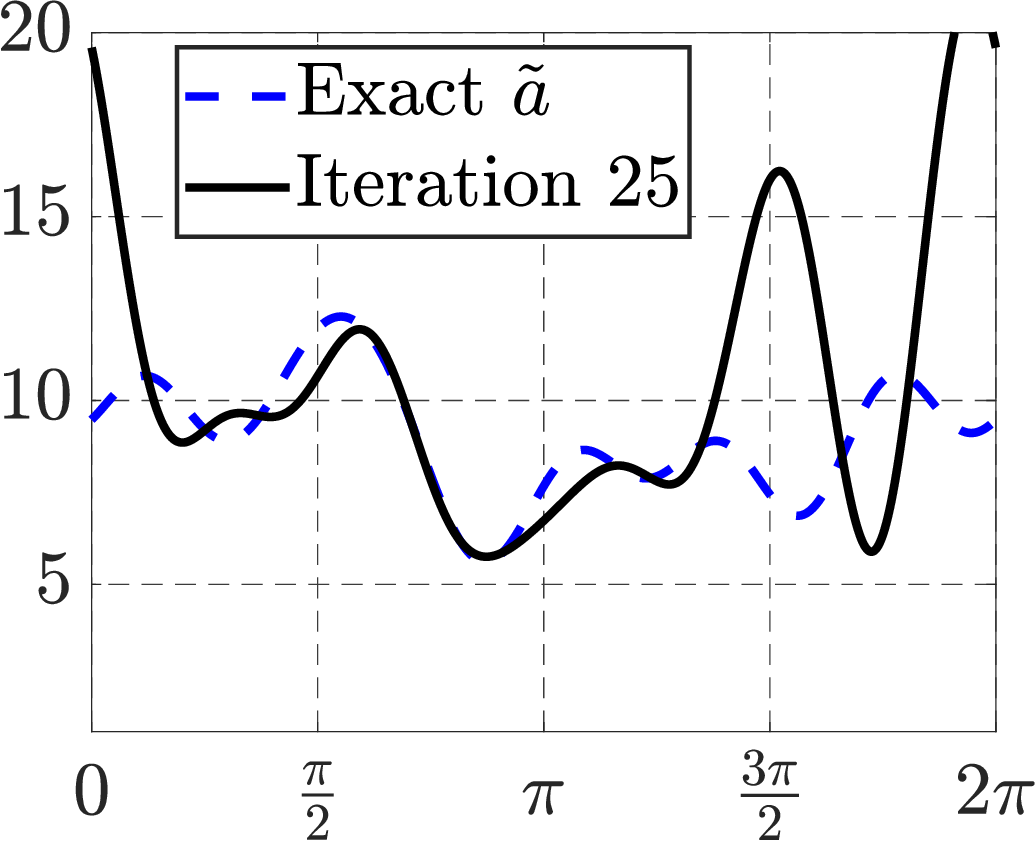}\hfill
\includegraphics[scale=.23]{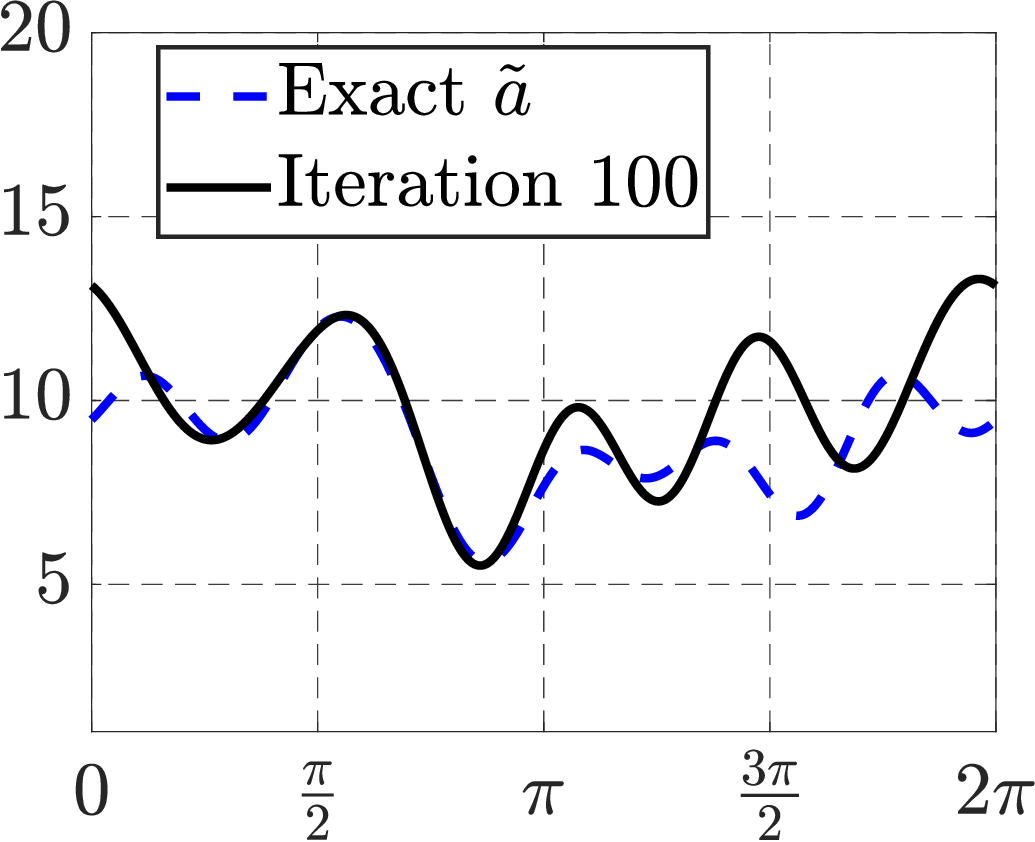}\hfill
\includegraphics[scale=.23]{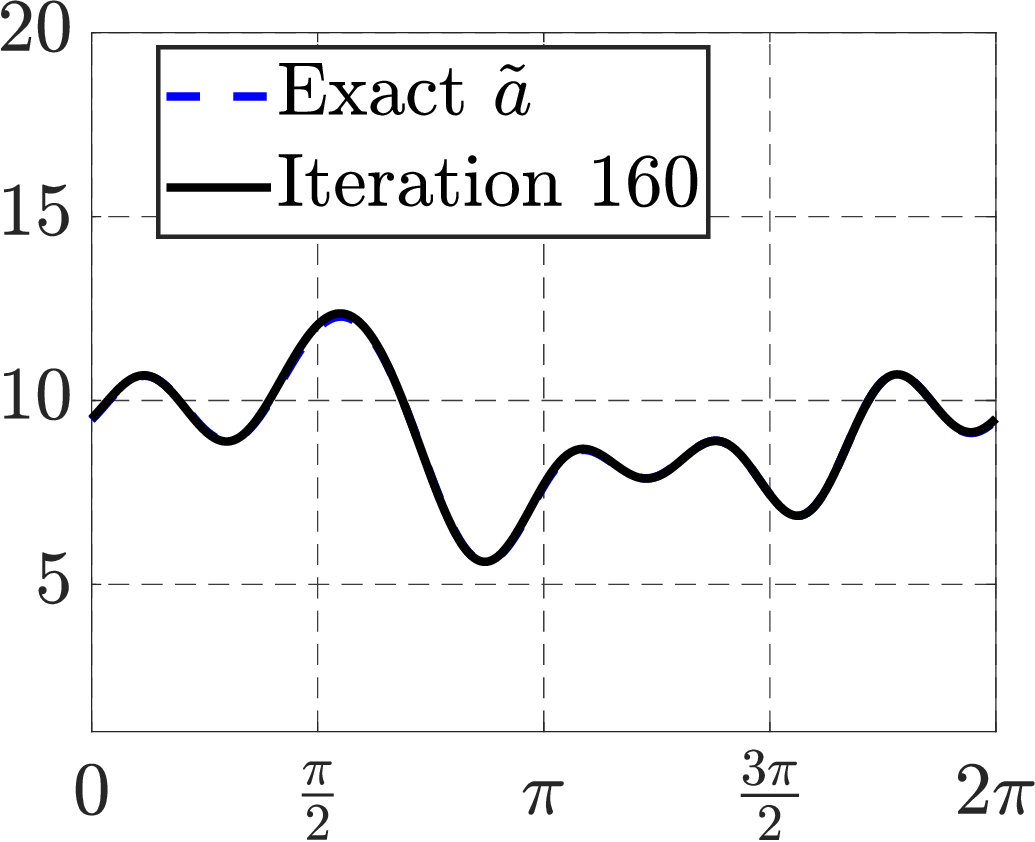}\hfill
\caption{Top left: Visualization of the exact solution on the domain $\Omega$. Top middle: Visualization of $q$ on the domain $\omega$. Top right to bottom right: The initial guess, the iterates 25, 100 and the final iteration 160. }
\label{fig:ex1}
\end{figure}
The convergence history with snapshots of the initial guess, the iterates at steps $\ell = 25,100$ and the final step $\ell = 160$ is found in Figure~\ref{fig:ex1}.
Even though the initial guess appears to be (visually) far off the unknown parameter $\tilde{a}$, the Newton scheme still converges.
We stress nevertheless that a successful reconstruction depends on the initial guess, the exact parameter $\tilde{a}$, the mesh size $h$ and on the domain $\omega$. The latter influences the reconstruction not only via its overall area but also how it is distributed within $\Omega$.

\textbf{Example 2.}
In our second example we numerically validate the convergence rate from Theorem~\ref{thm:main} both for exact and noisy given data.
To generate noisy data we consider the Hadamard type function
\begin{align*}
\delta_j(x) \, = \, \mathrm{Re}(\mathrm{e}^{\mathrm{i}(x-x_j)\cdot z})1_{B_{1/4}(x_j)}(x) \quad\text{with } \quad
z = [10,\; 10\mathrm{i}]^\top\, , x_j = [(-1)^j0.8, \; 0]^\top
\end{align*}
for $j=1,2$. The points $x_j$ correspond to the two middle points of the discs of radius $0.25$ of $\omega$ that lie on the $x$-axis (see also Figure~\ref{fig:ex1}).
We pick some noise level $\sigma\geq 0$ and define noisy data $q^\delta = q^{\delta(\sigma)}$ via
\begin{align*}
q^{\delta(\sigma)} \, = \, q + \sigma( \delta_1 + \delta_2)\, .
\end{align*}
The choice $\sigma = 0$ corresponds to given exact data $q$.
Next, we choose some maximal mesh sizes $h$ to vary within the range between $0.0138$ and $0.0011$.
For a given noise level $\sigma\geq 0$ we start the simulation with the coarsest mesh size and a constant initial guess given by $a=2$.
We run the Newton scheme and measure the relative error
between the final iterate $a_h^\delta$ and $\tilde{a}$ via
 \begin{align}\label{eq:errrel}
e_{\mathrm{rel}}^\delta(h) \, = \, \frac{\Vert \tilde{a} - a_h^\delta\Vert_{C^2(\Gamma)}}{\Vert \tilde{a}\Vert_{C^2(\Gamma)}} \, .
\end{align}
For $\sigma=0$ we simply write $e_{\mathrm{rel}}$. 
For the subsequent mesh sizes we
initialize the solver with the final iterate obtained for the preceding mesh size. 
This procedure is repeated for all mesh sizes in decreasing order.
For our simulations we pick the noise levels 
\begin{align*}
\sigma \in \left\{0,\;1 \times 10^{-7},\; 3\times 10^{-7},\; 1 \times 10^{-6},\; 3\times 10^{-6} \right\}\, .
\end{align*}
For exact data (i.e. for $\sigma = 0$) we expect the relative error to decay as $h^2$.
On the other hand, for noisy data we expect to see the decay $h^2$ until the noise dominates the error bound.
Both phenomena can be found in Figure~\ref{fig:ex2}.

\begin{figure}[t!]
\centering 
\raisebox{2.5mm}{\includegraphics[scale=.36]{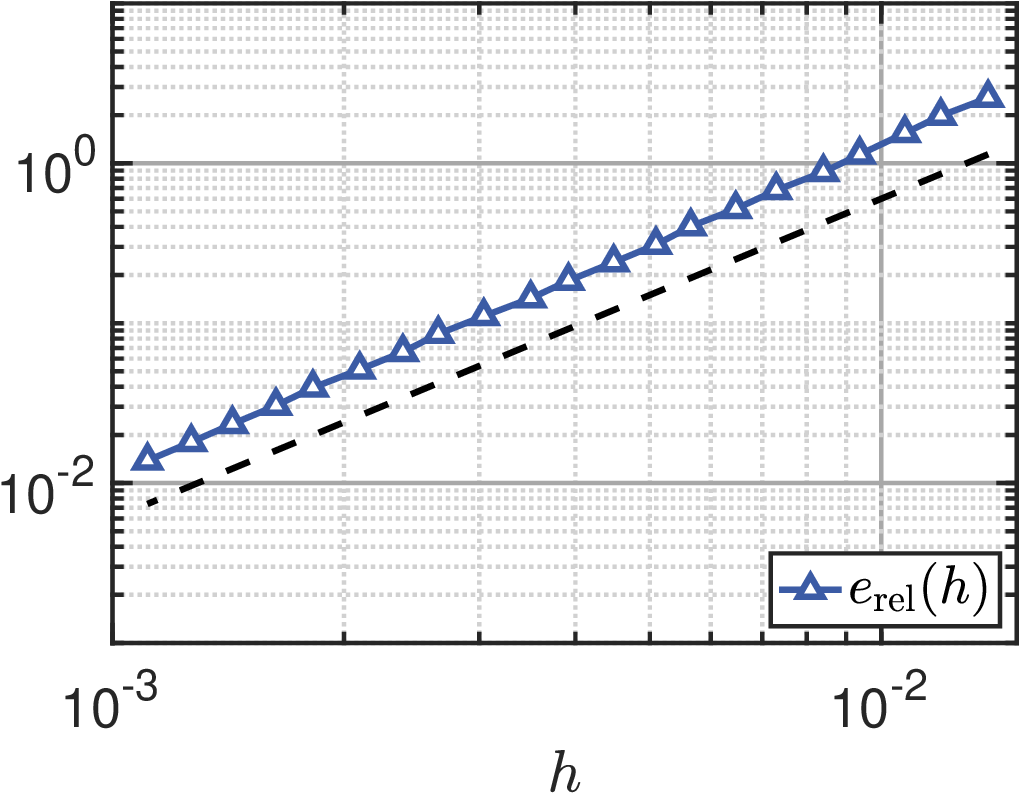}}\hfill
\raisebox{2.5mm}{\includegraphics[scale=.36]{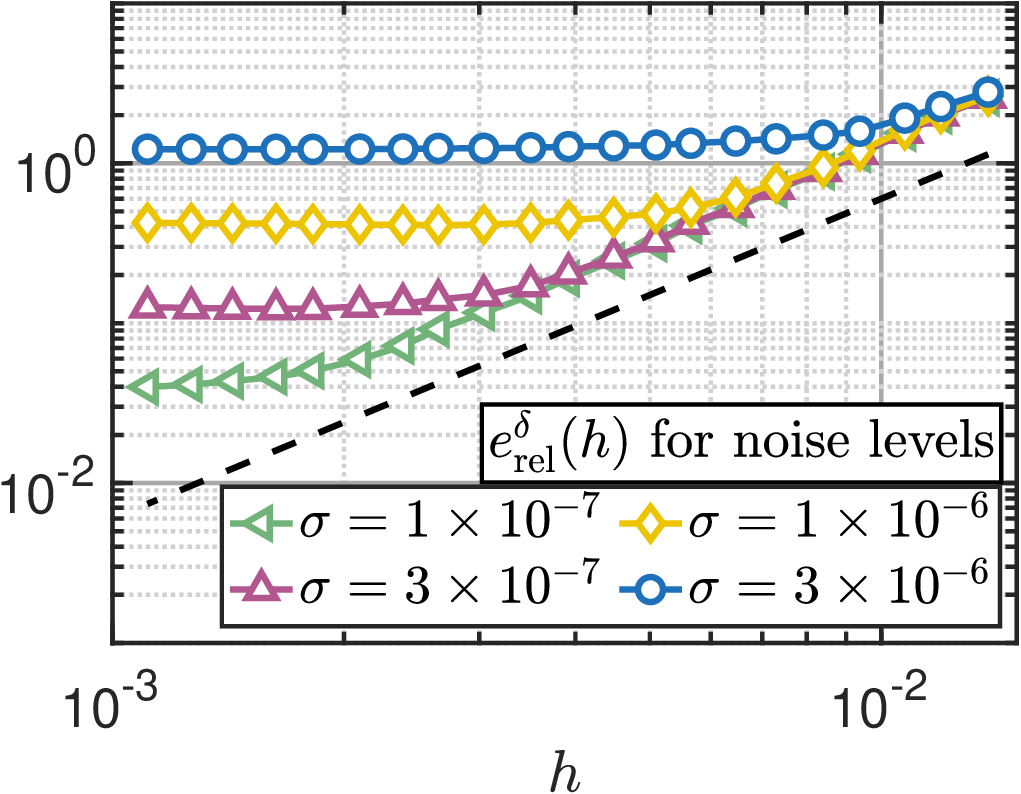}}
\caption{Double logarithmic plots showing the maximal mesh size $h$ against the relative error from \eqref{eq:errrel}. The dashed line has slope 2.
Left: Unperturbed data $q$ is given. Right: Noisy data $q^\delta$ for different noise levels $\sigma$ are given.}
\label{fig:ex2}
\end{figure}

\section*{Appendix}
\begin{proof}[Proof of Thm.\@ \ref{thm:howp}]
The higher-order regularity result of \cite[Thm.\@ I.9, p.\@ 40]{Bre72} yields that the solution of \eqref{eq:nonlin_robin_weak} satisfies $u \in H^2(\Omega)$ and
\begin{align*}
\Vert u \Vert_{H^2(\Omega)} \, \lesssim \, \Vert f \Vert_{L^2(\Omega)} + \Vert u \Vert_{H^1(\Omega)}\, .
\end{align*}
The $H^1$-bound from \eqref{eq:ubound} now yields \eqref{eq:H2bound}. Since $u \in H^2(\Omega)$ we get that the trace $u|_{\Gamma} \in H^{3/2}(\Gamma)$.
We will now show that $\beta(u|_{\Gamma}) \in H^{3/2}(\Gamma)$ by using bounds for composition operators on Sobolev spaces.
For the definition and properties of $C^3$ domains we refer to \cite[Def.\@ 3.28]{McLean00}.
There is a family of open sets $W_j$ covering $\Gamma$ and (rotated and shifted) $C^3$ hypographs $\Omega^{(j)}$ satisfying $W_j \cap \Gamma = W_j \cap \partial \Omega^{(j)}$.
Moreover, let $\Gamma_j = \partial \Omega^{(j)}$.
We consider a partition of unity $(\psi_j, W_j)_{j=1}^N$ for $\Gamma$ such that $\sum_{j=1}^N \psi_j = 1$ on $\Gamma$ and $\psi_j \in C_c^\infty(W_j)$.
By definition (see e.g.\@ \cite[Eq.\@ (3.29)]{McLean00}),
\begin{align}\label{eq:defsobolev}
\Vert \beta(u|_{\Gamma}) \Vert_{H^{3/2}(\Gamma)}^2 \, = \, \sum_{n=1}^N \Vert \psi_j \beta(u|_{\Gamma}) \Vert_{H^{3/2}(\Gamma_j)}^2 \, .
\end{align}
Every $\Gamma_j$ is the boundary of a $C^3$ hypograph up to some rigid motion $\kappa_j : \R^n \to \R^n$. 
Also, let $\varphi_j : \R^{n-1} \to \R^{n}$, $\varphi_j(x') = (x', \zeta_j(x'))$ with $\zeta_j \in C^3(\R^{n-1})$ be the function that describes the boundary of the $C^3$ hypograph $\kappa_j(\Gamma_j)$. This implies that 
\begin{align*}
\kappa_j(\Gamma_j) \, = \, \{ x \in \R^n \, : \, x_n = \zeta_j(x'), \; x' \in \R^{n-1} \} \, 
\end{align*} 
or equivalently, we can write $\Gamma_j = \kappa_j^{-1} \circ \varphi_j$ on $\R^{n-1}$ for every $j=1, \dots, N$.
For every $j$ let $\xi_j \in C_c^\infty(W_j)$ with $\xi_j = 1$ for $x \in \supp(\psi_j)$.
Then, 
\begin{align*}
\psi_j \beta(u|_{\Gamma}) \, = \, \psi_j \beta(\xi_j u|_{ \Gamma}) \quad \text{on } \Gamma_j \, .
\end{align*}
We recall the definition of the $H^s$-norm on shifted and rotated hypographs from \cite{McLean00}. In this work the definition is given for $0\leq s \leq 1$ on p.\@ 98 and extended for larger $s$ on p.\@ 99. We find that
\begin{align*}
\Vert \psi_j \beta(\xi_j u|_{\Gamma}) \Vert_{H^{3/2}(\Gamma_j)} \, = \, \Vert ( \psi_j \beta(\xi_j u|_{\Gamma}) ) \circ (\kappa_j^{-1} \circ \varphi_j) \Vert_{H^{3/2}(\R^{n-1})} \, .
\end{align*} 
For abbreviation, we write $\widetilde{\psi}_j = \psi_j \circ (\kappa_j^{-1} \circ \varphi_j)$, $\widetilde{\xi}_j = \xi_j \circ (\kappa_j^{-1} \circ \varphi_j)$ and $\widetilde{u}_j = u|_{\Gamma}\circ (\kappa_j^{-1} \circ \varphi_j)$.
Since $\widetilde{\psi}_j \in C^2$ (in fact it is even $C^3$), we can use \cite[Thm.\@ 3.20]{McLean00} and see that
\begin{align*}
\Vert \widetilde{\psi}_j \beta(\widetilde{\xi}_j \widetilde{u}_j) \Vert_{H^{3/2}(\R^{n-1})} \, \lesssim \, \Vert \beta(\widetilde{\xi}_j \widetilde{u}_j) \Vert_{H^{3/2}(\R^{n-1})} \, .
\end{align*}
We introduce yet another cutoff $\chi_j \in C_c^{\infty}(W_j)$ that satisfies $\chi_j =1 $ for $x \in \widetilde{\xi}_j(\R^{n-1}) \widetilde{u}_j(\R^{n-1})$.
Then, we find that $\beta(\widetilde{\xi}_j \widetilde{u}_j) = \chi_j(\widetilde{\xi}_j \widetilde{u}_j)\beta(\widetilde{\xi}_j \widetilde{u}_j)$ and applying
\cite[Thm.\@ 11]{BoSi11} to $\widetilde{\beta} = \chi_j \beta$ now yields that
\begin{align*}
\Vert \beta(\widetilde{\xi}_j \widetilde{u}_j) \Vert_{H^{3/2}(\R^{n-1})} \, \lesssim \, \Vert \beta \Vert_{C^2(u(\Gamma))} \Vert \widetilde{\xi}_j \widetilde{u}_j \Vert_{H^{3/2}(\R^{n-1})} \Big(1 + \Vert \widetilde{\xi}_j \widetilde{u}_j \Vert_{H^{3/2}(\R^{n-1})}^{1/2} \Big) \, .
\end{align*}
Since $\widetilde{\xi}_j$ localizes to $W_j$ we can now estimate
\begin{align*}
\Vert \beta(\widetilde{\xi}_j \widetilde{u}_j) \Vert_{H^{3/2}(\R^{n-1})} \, \lesssim \, \Vert\beta \Vert_{C^2(u(\Gamma))} \Vert u \Vert_{H^{3/2}(\Gamma)} \big(1 + \Vert u \Vert_{H^{3/2}(\Gamma)}^{1/2} \big) \, .
\end{align*}
Returning to \eqref{eq:defsobolev} yields that
\begin{align}\label{eq:betabound}
\Vert \beta(u|_{\Gamma}) \Vert_{H^{3/2}(\Gamma)} \, \lesssim \, \Vert\beta \Vert_{C^2(u(\Gamma))} \Vert u \Vert_{H^{3/2}(\Gamma)} \big(1 + \Vert u \Vert_{H^{3/2}(\Gamma)}^{1/2} \big)\, .
\end{align}
Since $f \in H^1(\Omega)$ is assumed and $\beta(u|_{\Gamma}) \in H^{3/2}(\Gamma)$ a higher order regularity result as described, e.g., in \cite[Thm.\@ 2.5.1.1]{Gris85} yields that $u \in H^3(\Omega)$. 
The closed graph theorem in combination with \cite[Thm.\@ 3.20]{McLean00}, the $H^2$ bound of $u$ in \eqref{eq:H2bound} and \eqref{eq:betabound} shows that
\begin{align*}
\Vert u \Vert_{H^3(\Omega)} \, &\lesssim \, \Vert f \Vert_{H^1(\Omega)} + \Vert g \Vert_{H^{3/2}(\Gamma)}  + \Vert a \beta(u|_{\Gamma}) \Vert_{H^{3/2}(\Gamma)} \\
& \lesssim \, \Vert f \Vert_{H^1(\Omega)} + \Vert g \Vert_{H^{3/2}(\Gamma)}  \\
&\phantom{\lesssim \, }+ \Vert a \Vert_{C^2(\Gamma)}\Vert\beta \Vert_{C^2(u(\Gamma))} \Vert u \Vert_{H^{3/2}(\Gamma)} \big(1 + \Vert u \Vert_{H^{3/2}(\Gamma)}^{1/2} \big) \\
& \lesssim \, \Vert f \Vert_{H^1(\Omega)}  + \Vert g \Vert_{H^{3/2}(\Gamma)}  \\
&\phantom{\lesssim \, }+\Vert a \Vert_{C^2(\Gamma)}\Vert\beta \Vert_{C^2(u(\Gamma))} \Vert u \Vert_{H^2(\Omega)}\big(1 + \Vert u \Vert_{H^2(\Omega)}^{1/2} \big) \, .
\end{align*}
\end{proof}
\bibliographystyle{abbrvurl}
\bibliography{references}

\end{document}